\newtheorem{definition}{Definition}[section]
\newtheorem{theorem}[definition]{Theorem}
\newtheorem{proposition}[definition]{Proposition}
\newtheorem{lemma}[definition]{Lemma}
\newtheorem{remark}[definition]{Remark}
\newtheorem{example}[definition]{Example}
\newcommand\dcut{d\llap {\raisebox{.9ex}{$\scriptstyle-\!$}}}
\def\supp{ {\operatorname{supp}} }
\newcommand{\x}{\langle x\rangle}
\newcommand{\csi}{\langle \xi \rangle}
\def\R {\mathbb{R}}       
\def\N {\mathbb{N}}       
\def\Z {\mathbb{Z}}
\def\SG {{S}}
\def\ds{\displaystyle}
\def\cS{{\mathcal S}}
\def\1{\lambda}
\def\2{\Sigma_{2}}
\def\vp{\varphi}
\def\<{{\langle}}
\def\>{{\rangle}}
\def\norm#1{{\langle} #1 {\rangle}}
\def\SX{ {\mathcal{S}} }
\newcommand{\afrac}[2]{\genfrac{}{}{0pt}{1}{#1}{#2}} 
\newcommand{\beqsn}{\arraycolsep1.5pt\begin{eqnarray*}}
\newcommand{\eeqsn}{\end{eqnarray*}\arraycolsep5pt}
\newcommand{\beqs}{\arraycolsep1.5pt\begin{eqnarray}}
\newcommand{\eeqs}{\end{eqnarray}\arraycolsep5pt}
\newcommand*{\dbar}{\mbox{\dj}}	
\def\Op{ {\operatorname{Op}} }
\newcommand{\Ph}{\mathcal P}
\newcommand{\Phr}{\mathcal P_r}
\def\fy{\varphi}
\author{Alessia Ascanelli}
\address{Dipartimento di Matematica, Università degli Studi di Ferrara, via Machiavelli 30, 44121 Ferrara, Italy}
\email{alessia.ascanelli@unife.it}
\author{Sandro Coriasco}
\address{Dipartimento di Matematica ``G. Peano'', Università degli Studi di Torino, via Carlo Alberto 10. 10123 Torino,  Italy}
\email{sandro.coriasco@unito.it}
\title[FIO algebra and fundamental solution to SG hyperbolic systems]
{Fourier Integral Operators Algebra and\\ Fundamental Solutions to hyperbolic systems
\\with polynomially bounded coefficients on $\R^n$}
\keywords{Fourier integral operator, multi-product, phase function, hyperbolic first order systems}
\subjclass[2010]{Primary: 58J40; Secondary: 35S05, 35S30, 47G30, 58J45}
\begin{document}

\begin{abstract}
We study the composition of an arbitrary number of Fourier integral operators $A_j$,
$j=1,\dots,M$, $M\ge 2$, defined through symbols belonging to the so-called SG 
classes. We give conditions ensuring that the composition $A_1\circ\cdots\circ A_M$
of such operators still belongs to the same class.  Through this, we are then able 
to show well-posedness in weighted Sobolev spaces for first order hyperbolic 
systems of partial differential equations with coefficients in SG classes, by constructing 
the associated fundamental solutions. These results expand the existing theory
for the study of the properties ``at infinity'' of the solutions to hyperbolic Cauchy
problems on $\R^n$ with polynomially bounded coefficients.  
\end{abstract}

\maketitle

\tableofcontents


\section{Introduction}\label{sec:intro}
\setcounter{equation}{0}
%
%
We deal with a class of Fourier integral operators globally defined on $\R^n$, namely, the
SG Fourier integral operators (SG FIOs, for short, in the sequel), that is, the class of FIOs defined through symbols
belonging to the so-called SG classes. 

The class $\SG ^{m,\mu}(\R^{2n})$ of SG symbols of order $(m,\mu) \in \R^2$ is given by all the functions 
$a(x,\xi) \in C^\infty(\R^n\times\R^n)$
with the property
that, for any multiindices $\alpha,\beta \in \Z_+^n$, there exist
constants $C_{\alpha\beta}>0$ such that the conditions 
\begin{equation}
	\label{eq:disSG}
	|D_\xi^{\alpha} D_x^{\beta} a(x, \xi)| \leq C_{\alpha\beta} 
	\x^{m-|\beta|}\csi^{\mu-|\alpha|},
	\qquad (x, \xi) \in \R^n \times \R^n,
\end{equation}
hold. Here $\langle x \rangle=(1+|x|^2)^{1/2}$ when $x\in\R^n$, and $\Z_+$
is the set of non-negative integers.
These classes, together with corresponding
classes of pseudo-differential operators 
$\Op (\SG ^{m,\mu})$,
were first introduced in
the '70s by H.O.~Cordes \cite{CO} and C.~Parenti
\cite{PA72}, see also R.~Melrose \cite{ME}. They form a
graded algebra with respect to composition, i.e.,
$$
\Op (\SG ^{m_1,\mu _1})\circ \Op (\SG ^{m_2,\mu _2})
\subseteq \Op (\SG ^{m_1+m_2,\mu _1+\mu _2}),
$$
whose residual
elements are operators with symbols in
\[
	 \SG ^{-\infty,-\infty}(\R^{2n})= \bigcap_{(m,\mu) \in \R^2} \SG ^{m,\mu} (\R^{2n})
	 =\cS(\R^{2n}),
\]
that is, those having kernel in $\cS(\R^{2n})$, continuously
mapping  $\cS^\prime(\R^n)$ to $\cS(\R^n)$.

\par

Operators in $\Op (\SG ^{m,\mu})$
are continuous on $\cS(\R^n)$, and extend uniquely to
continuous operators on $\cS^\prime(\R^n)$ and from
$H^{s,\sigma}(\R^n)$ to $H^{s-m,\sigma-\mu}(\R^n)$,
where $H^{r,\varrho}(\R^n)$,
$r,\varrho \in \R$, denotes the weighted Sobolev (or Sobolev-Kato) space
\begin{equation*}
  	H^{r,\varrho}(\R^n)= \{u \in \cS^\prime(\R^{n}) \colon \|u\|_{r,\varrho}=
	\|{\norm .}^r\norm D^\varrho u\|_{L^2}< \infty\}.
\end{equation*}

\par

An operator $A=\Op(a)$, is called \emph{elliptic}
(or $\SG ^{m,\mu}$-\emph{elliptic}) if
$a\in \SG ^{m,\mu} (\R^{2n})$ and there
exists $R\ge0$ such that
%
\[
	C\norm{x}^{m} \norm{\xi}^{\mu}\le |a(x,\xi)|,\qquad 
	|x|+|\xi|\ge R,
\] 
for some constant $C>0$. An elliptic SG  operator $A \in \Op (\SG ^{m,\mu})$ admits a
parametrix $P\in \Op (\SG ^{-m,-\mu})$ such that
\[
PA=I + K_1, \quad AP= I+ K_2,
\]
for suitable $K_1, K_2\in\Op(\SG^{-\infty,-\infty}(\R^{2n}))$, where $I$ denotes the identity operator. 
In such a case, $A$ turns out to be a Fredholm
operator on the scale of functional spaces $H^{r,\varrho}(\R^n)$,
$r,\varrho\in\R$.

\par

In 1987, E.~Schrohe \cite{Sc} introduced a class of non-compact
manifolds, the so-called SG manifolds, on which a version of
SG calculus can be defined.
Such manifolds admit a finite atlas, whose changes of
coordinates behave like symbols of order $(0,1)$ (see \cite{Sc}
for details and additional technical hypotheses). A relevant
example of  SG manifolds are the manifolds with
cylindrical ends, where also the concept of classical SG
operator makes sense,
see, e.{\,}g. \cite{BC10a,CoMa13,CoSc14,ES97,MP02,ME}.
With $\widehat{u}$ denoting the Fourier transform of $u\in\SX(\R^n)$, given by
\begin{equation}\label{eq:tfu}
	\widehat{u}(\xi)=\int e^{-ix\cdot\xi}u(x)\,dx,
\end{equation}
for any $a\in S^{m,\mu}(\R^{2n})$, $\varphi\in\Ph$ 
-- the set of SG phase functions, see Section \ref{sec:sgcalc}
below --, the SG FIOs are defined, for $u\in\SX(\R^{n})$, as
\begin{align}
\label{eq:typei}
u\mapsto (\Op _\fy (a)u)(x)&= (2\pi )^{-n}\int
e^{i\fy (x,\xi )} a(x,\xi )
\widehat u(\xi )\, d\xi ,
\end{align}
and
\begin{align}
\label{eq:typeii}
u\mapsto (\Op^*_\fy (a)u)(x)&= (2\pi )^{-n}\iint
e^{i(x\cdot \xi -\fy (y,\xi ))} \overline {a(y,\xi )} u(y)\, dyd\xi.
\end{align}
Here the operators $\Op _\fy (a)$ and
$\Op _\fy ^*(a)$ are sometimes called SG FIOs
of type I and type II, respectively, with symbol $a$ and
SG phase function $\fy$. Note that a type II operator satisfies
$\Op^*_\fy(a)=\Op _\fy (a)^*$, that is, it is the formal $L^2$-adjoint of the 
type I operator $\Op_\fy(a)$. 

The analysis of SG FIOs started in 
\cite{Coriasco:998.1}, where composition results with the corresponding classes of pseudodifferential operators,
and of SG FIOs of type I and type II with regular phase functions, have been proved, as well as the basic
continuity proprties in $\SX(\R^n)$ and $\SX^\prime(\R^n)$ of operators in the class. A version of the Asada-Fujiwara 
$L^2(\R^n)$-continuity theorem was also proved there, for operators $\Op_\fy(a)$ 
with symbol $a\in\SG^{0,0}(\R^{2n})$ and regular SG phase function $\fy\in\Phr$, see Definition \ref{def:phaser}.
Applications to SG hyperbolic Cauchy problems were initially given in \cite{Coriasco:998.2, Coriasco:998.3}.

Many authors have, since then, expanded the SG FIOs theory in various directions. To mention a few, see, e.g.,
G.D. Andrews \cite{Andrews}, M. Ruzhansky, M. Sugimoto \cite{RuSu}, 
E. Cordero, F. Nicola, L Rodino \cite{CorNicRod1},
and the recent works by S. Coriasco and M. Ruzhansky \cite{CoRu}, 
S. Coriasco and R. Schulz \cite{CoSc13,CoSc14}. Concerning applications to SG hyperbolic problems 
and propagation of singularities, see, e.g., A. Ascanelli and M. Cappiello \cite{AC06,AC08, AC10},
M. Cappiello \cite{Ca04}, S. Coriasco, K. Johansson, J, Toft \cite{CJT4}, S. Coriasco, L. Maniccia \cite{CoMa}. 
Concerning applications to anisotropic evolution equations of Schr\"odinger type see, e.g., 
A. Ascanelli, M. Cappiello \cite{AC13}.

Here our aim is to expand the results in \cite{Coriasco:998.1, Coriasco:998.2}, through the study of the composition 
of $M\geq 2$ SG FIOs $A_j:=\Op_{\varphi_j}(a_j)$ with \textit{regular} SG phase functions 
$\varphi_j\in\Phr(\tau_j)$ -- see Definition \ref{def:phaser} below -- and symbols $a_j\in\SG^{m_j,\mu_j}(\R^{2n})$, $j=1,\ldots,M$. 
To our best knowledge, the composition of SG FIOs with different phase functions of the type that we consider in this paper
has not been studied by other authors.

First, we shall prove, under suitable assumptions, the existence of a SG phase function $\phi\in\Phr(\tau)$, 
called the \textit{multi-product} of the SG phase functions $\varphi_1,\ldots,\varphi_M$, and of a symbol 
$a\in\SG^{m,\mu}(\R^{2n})$, with 
$m:=m_1+\cdots+m_M$, $\mu:=\mu_1+\cdots+\mu_M$, such that 
\begin{equation}\label{eq:fiomltpr}
	A=\Op_\phi(a):=A_1\circ\cdots\circ A_M,
\end{equation}
see Theorem \ref{thm:main} below for the precise statement.

Subsequently, we apply such result to study a class of hyperbolic Cauchy pro\-blems. We focus on first order systems of partial differential equations of hyperbolic type with $(t,x)-$depending coefficients in SG classes. By means of Theorem \ref{thm:main}, we construct the fundamental solution $\{E(t,s)\}_{0\leq s\leq t\leq T}$ to the system. The existence of the fundamental solution provides, via Duhamel's formula, existence and uniqueness of the solution to the system, for any given Cauchy data in the weighted Sobolev spaces $H^{r,\varrho}(\R^n)$. A remarkable feature, typical for these classes of hyperbolic problems, is the
\textit{well-posedness with loss/gain of decay at infinity}, observed for the first time in \cite{AC06}, see also Section \ref{sec:fundsol} below.
We need these results in the study of certain stochastic equations, 
which will be treated in the forthcoming paper \cite{ACS15}.

This paper is organized as follows. Section \ref{sec:sgcalc} is devoted to fixing notation 
and recalling some basic definitions and known results
on SG symbols and Fourier integral operators, which will be used throughout the paper. 
In Section \ref{sec:mpsgphf} we perform the first step of the proof of our main result, Theorem \ref{thm:main},
defining and studying the multi-product of $M\ge2$ regular SG phase functions. In Section \ref{sec:sgfioprod} we 
prove Theorem \ref{thm:main}, showing the existence, under suitable hypotheses,
of $\phi\in\Phr$ and $a\in \SG^{m,\mu}$ such that \eqref{eq:fiomltpr} holds.
Finally, in Section \ref{sec:fundsol} we obtain the fundamental solution to  
SG hyperbolic first order systems.

\section*{Acknowledgements}
\noindent
The authors were supported by the INdAM-GNAMPA grant 
``Equazioni Differenziali a Derivate Parziali di Evoluzione e Stocastiche'' (Coordinator: S. Coriasco, Dep. of Mathematics ``G. Peano'', University of Turin).

\section{SG symbols and Fourier integral operators}\label{sec:sgcalc}
\setcounter{equation}{0}
%
%
In this section we fix some notation and recall some of the results proved in \cite{Coriasco:998.1}, which will be used
below. 
SG pseudodifferential operators $a(x,D)=\Op(a)$ can be introduced by means of the usual left-quantization
\[
	(\Op(a)u)(x)= (2 \pi)^{-n}\int e^{i x \cdot \xi} a(x, \xi) \widehat{u}(\xi) d\xi, \quad u\in\SX(\R^n),
\]
with $\widehat{u}$ the Fourier transform of $u$ defined in \eqref{eq:tfu},
starting from symbols $a(x,\xi) \in C^\infty(\R^n\times\R^n)$ satisfying \eqref{eq:disSG}.
Symbols of this type belong to the class denoted by $\SG^{m,\mu}(\R^{2n})$, and the corresponding operators 
constitute the class $\Op(\SG^{m,\mu}(\R^{2n}))$. In the sequel we will often simply write 
$\SG^{m,\mu}$, fixing the dimension of the base space to $n$. For $m,\mu\in\R$, $l\in\Z_+$, $a\in\ S^{m,\mu}$,
the quantities
\[
	\vvvert a \vvvert^{m,\mu}_l 
	= 
	\max_{|\alpha+\beta|\le l}\sup_{x,\xi\in\R^n}\norm{x}^{-m+|\alpha|} 
	                                                                     \norm{\xi}^{-\mu+|\beta|}
	                                                                    | \partial^\alpha_x\partial^\beta_\xi a(x,\xi)|
\]
are a family of seminorms, defining  the Fr\'echet topology of $S^{m,\mu}$. The continuity properties of
the elements of $\Op(S^{m,\mu})$ on the scale of spaces $H^{r,\rho}$, $m,\mu,r,\rho\in\R$, is expressed 
more precisely in the next Theorem \ref{thm:sobcont} (see \cite{CO} and the references quoted therein 
for the result on more general classes of SG type symbols).
\begin{theorem}\label{thm:sobcont}
	Let $a\in S^{m,\mu}(\R^n)$, $m,\mu\in\R$. Then, for any $r,\rho\in\R$, 
	$\Op(a)\in\mathcal{L}(H^{r,\rho}(\R^n),H^{r-m,\rho-\mu}(\R^n))$, and there exists a constant $C>0$,
	depending only on $n,m,\mu,r,\rho$, such that
	\begin{equation}\label{eq:normsob}
		\|\Op(a)\|_{\mathcal{L}(H^{r,\rho}(\R^n), H^{r-m,\rho-\mu}(\R^n))}\le 
		C\vvvert a \vvvert_{\left[\frac{n}{2}\right]+1}^{m,\mu},
	\end{equation}
	where $[s]$ denotes the integer part of $s\in\R$.
\end{theorem}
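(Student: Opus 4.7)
The plan is to reduce the statement to the $L^2$-boundedness of operators with symbols in $\SG^{0,0}$ (an SG-version of the Calder\'on--Vaillancourt theorem), by conjugating $\Op(a)$ with order-reducing isomorphisms of the scale $\{H^{r,\rho}\}$.

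First, I would invoke a family of \emph{order-reducing operators} $\{\Lambda_{s,\sigma}\}_{(s,\sigma)\in\R^2}$, with $\Lambda_{s,\sigma}\in\Op(\SG^{s,\sigma})$, $\SG^{s,\sigma}$-elliptic, inducing for every $(r,\rho)$ topological isomorphisms $H^{r,\rho}\to H^{r-s,\rho-\sigma}$, with inverse $\Lambda_{s,\sigma}^{-1}\in\Op(\SG^{-s,-\sigma})$; see \cite{CO} for a standard realization via a Friedrichs-type correction of $\Op(\x^s\csi^\sigma)$. This yields the norm equivalence $\|u\|_{r,\rho}\asymp\|\Lambda_{r,\rho}u\|_{L^2}$, with constants depending only on $n,r,\rho$. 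Writing
$$
\Op(a)=\Lambda_{r-m,\rho-\mu}^{-1}\,B\,\Lambda_{r,\rho},\qquad B:=\Lambda_{r-m,\rho-\mu}\,\Op(a)\,\Lambda_{r,\rho}^{-1},
$$
the graded-algebra property of $\Op(\SG^{\bullet,\bullet})$ recalled in the introduction yields $B\in\Op(\SG^{0,0})$, and the continuity of the asymptotic composition formula with respect to the Fr\'echet seminorms gives $\vvvert B\vvvert^{0,0}_{k}\le C_{k,n,m,\mu,r,\rho}\,\vvvert a\vvvert^{m,\mu}_{k+k_0}$ for some $k_0=k_0(n,m,\mu,r,\rho)$. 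Combining this with the norm equivalence reduces the theorem to showing that, for $b\in\SG^{0,0}$, $\|\Op(b)u\|_{L^2}\le C\,\vvvert b\vvvert^{0,0}_{[n/2]+1}\,\|u\|_{L^2}$.

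For the latter $L^2$-bound I would carry out a Cotlar--Stein almost-orthogonality argument. Picking a dyadic partition of unity $\{\chi_j\}$ in $\xi$, one splits $\Op(b)=\sum_j\Op(b\chi_j)$ and estimates $\|\Op(b\chi_j)^*\Op(b\chi_k)\|_{\mathcal{L}(L^2)}$ and $\|\Op(b\chi_j)\Op(b\chi_k)^*\|_{\mathcal{L}(L^2)}$ by integrations by parts in the kernel of the composition, exploiting that $b\chi_j$ and $b\chi_k$ localise $\xi$ on disjoint rings when $|j-k|\gg 1$. Cotlar--Stein then produces the $L^2$-bound from summable almost-orthogonality constants, and a careful book-keeping (together with Sobolev embedding in the transverse variable) optimizes the seminorm count to $[n/2]+1$.

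The principal obstacle is precisely this sharp seminorm count: a direct Schur-lemma argument on the oscillatory kernel $K(x,y)=(2\pi)^{-n}\!\int e^{i(x-y)\cdot\xi}b(x,\xi)\,d\xi$ would require more than $n$ integrations by parts in $\xi$, producing a non-sharp count roughly twice $[n/2]+1$. Exploiting orthogonality via Cotlar--Stein (or, alternatively, a Coifman--Meyer-type representation combined with Plancherel) is the key to recovering the optimal constant. The remaining steps---existence of the $\Lambda_{s,\sigma}$, composition of SG operators, and continuous dependence of the asymptotic formula on symbol seminorms---are routine consequences of the SG calculus recalled above.
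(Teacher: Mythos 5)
The paper does not prove this theorem; it is quoted from Cordes \cite{CO}, so there is no internal argument to compare against. Your two-step strategy---conjugate by order-reducing isomorphisms $\Lambda_{s,\sigma}$ to reduce to the $\SG^{0,0}$ case, then invoke an $L^2$-boundedness theorem of Calder\'on--Vaillancourt type---is the standard route and is structurally sound.

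There is, however, a genuine gap in the seminorm book-keeping, and your own estimates expose it. You correctly record $\vvvert B\vvvert^{0,0}_{k}\le C_{k,n,m,\mu,r,\rho}\,\vvvert a\vvvert^{m,\mu}_{k+k_0}$ with a \emph{positive} shift $k_0=k_0(n,m,\mu,r,\rho)$ coming from the remainder estimates in the SG composition calculus; plugging $k=[n/2]+1$ into this and combining it with the $L^2$ bound you aim for then produces $\|\Op(a)\|\lesssim\vvvert a\vvvert^{m,\mu}_{[n/2]+1+k_0}$, not the $\vvvert a\vvvert^{m,\mu}_{[n/2]+1}$ claimed in \eqref{eq:normsob}. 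Nothing in your plan makes $k_0$ vanish, and a derivative-neutral order reduction is not a triviality. Separately, the assertion that Cotlar--Stein delivers the $L^2$ bound with only $\vvvert b\vvvert^{0,0}_{[n/2]+1}$ is stated, not derived: the seminorm $\vvvert\cdot\vvvert_l$ in this paper ranges over multi-indices with $|\alpha+\beta|\le l$, whereas standard Calder\'on--Vaillancourt/Cotlar--Stein arguments control the operator norm by derivatives with $|\alpha|\le[n/2]+1$ and $|\beta|\le[n/2]+1$ \emph{separately}, i.e.\ $|\alpha+\beta|$ up to roughly $n+2$. Whether the extra $\x^{-|\alpha|}\csi^{-|\beta|}$ decay enjoyed by $\SG^{0,0}$ symbols lets one cut this to $[n/2]+1$ total derivatives is precisely the delicate technical point, and ``careful book-keeping'' does not settle it. Your plan would prove continuity $H^{r,\rho}\to H^{r-m,\rho-\mu}$ with \emph{some} finite seminorm of $a$---which is all the rest of the paper actually uses---but it does not yield the sharp constant stated in \eqref{eq:normsob}.
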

We now introduce the class of SG phase functions.
Here and in what follows, $A\asymp B$ means that $A\lesssim B$ and $B\lesssim A$,
where $A\lesssim B$ means that $A\le c\cdot B$, for a suitable constant $c>0$.
\begin{definition}[$SG$ phase function]\label{def:phase}
A real valued function $\varphi\in C^\infty(\R^{2n})$  belongs to the class $\mathcal P$ of SG phase functions if it satisfies the following conditions:
\begin{enumerate}
\item $\varphi\in S^{1,1}(\R^{2n})$;
\item $\<\varphi'_x(x,\xi)\>\asymp\<\xi\>$ as $|(x,\xi)|\to\infty$;
\item $\<\varphi'_\xi(x,\xi)\>\asymp\<x\>$ as $|(x,\xi)|\to\infty$.	
\end{enumerate}
\end{definition}

Functions of class $\mathcal P$ are those used in the construction of the SG FIOs calculus. 
The SG FIOs of type I and type II, $\Op_\fy(a)$ and
$\Op^*_\fy(b)$, are defined as in \eqref{eq:typei} and \eqref{eq:typeii}, respectively, with $\fy\in\Ph$ and $a,b\in S^{m,\mu}$.
The next Theorem \ref{thm:compi} about composition between SG pseudodifferential operators and SG FIOs was originally proved in \cite{Coriasco:998.1}, see also \cite{CJT4, CoPa, CT14}.
\begin{theorem}\label{thm:compi}
Let $\fy\in\Ph$ and assume $p\in S^{t,\tau}(\R^{2n})$, $a,b\in S^{m,\mu}(\R^{2n})$. Then,
\begin{align*}
\Op (p)\circ \Op _\fy (a) &= \Op _\fy (c_1+r_1) = \Op _\fy (c_1) \mod \Op (S^{-\infty,-\infty}(\R^{2d})),
\\[1ex]
\Op (p)\circ \Op ^* _\fy(b) &= \Op ^* _\fy(c_2+r_2) = \Op^* _\fy (c_2) \mod \Op (S^{-\infty,-\infty}(\R^{2d})),
\\[1ex]
\Op _\fy (a) \circ \Op (p) &= \Op _\fy (c_3+r_3) = \Op _\fy (c_3)  \mod \Op (S^{-\infty,-\infty}(\R^{2d})),
\\[1ex]
\Op _\fy ^*(b) \circ \Op (p) &= \Op ^* _\fy(c_4+r_4) = \Op^* _\fy (c_4)\mod \Op (S^{-\infty,-\infty}(\R^{2d})),
\end{align*}
for some $c_j\in S^{m+t,\mu+\tau}(\R^{2n})$, $r_j\in S^{-\infty,-\infty}(\R^{2d})$, $j=1,\dots ,4$.
\end{theorem}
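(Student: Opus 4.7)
The plan is to establish the first identity in detail; the other three then follow by symmetric arguments (for $\Op_\varphi(a)\circ\Op(p)$) or by taking formal $L^2$-adjoints, since $\Op^*_\varphi(b) = \Op_\varphi(b)^*$ and the class $\Op(S^{t,\tau})$ is closed under adjunction. Writing $(\Op(p)\circ\Op_\varphi(a))u$ for $u\in\SX(\R^n)$ as a triple oscillatory integral and formally factoring out $e^{i\varphi(x,\xi)}\widehat u(\xi)$, I would propose as candidate symbol
\begin{equation*}
c_1(x,\xi) = (2\pi)^{-n}\iint e^{i[(x-y)\cdot\eta + \varphi(y,\xi)-\varphi(x,\xi)]}\, p(x,\eta)\, a(y,\xi)\, d\eta\, dy.
\end{equation*}

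The first step is the Kuranishi-type reduction: with
\begin{equation*}
\Psi(x,y,\xi) = \int_0^1 \varphi'_x(x+t(y-x),\xi)\, dt, \qquad \varphi(y,\xi)-\varphi(x,\xi) = (y-x)\cdot\Psi(x,y,\xi),
\end{equation*}
the change of variable $\eta \mapsto \eta + \Psi(x,y,\xi)$ converts the integrand's phase into the purely pseudodifferential shape $(x-y)\cdot\eta$. Here the conditions from Definition \ref{def:phase} enter in an essential way: $\langle\varphi'_x\rangle\asymp\langle\xi\rangle$ together with $\varphi\in S^{1,1}$ force $\Psi$ and its derivatives to satisfy uniform SG-type bounds on $\R^{3n}$, so that the integration-by-parts arguments needed to define the oscillatory integral, and the Jacobian estimates for the substitution, all respect both the $\langle x\rangle$- and $\langle\xi\rangle$-scales.

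After the substitution, I would Taylor expand $p(x,\eta+\Psi(x,y,\xi))$ around $\eta$ and use the identity $(x-y)^\alpha e^{i(x-y)\cdot\eta} = D_\eta^\alpha e^{i(x-y)\cdot\eta}$ together with stationary-phase-type integrations by parts in $y$. This produces the formal asymptotic expansion
\begin{equation*}
c_1(x,\xi) \sim \sum_{\alpha\in\Z_+^n} \frac{1}{\alpha!}\, \bigl[\partial_\eta^\alpha p(x,\varphi'_x(x,\xi))\cdot D_y^\alpha\bigl(e^{i\widetilde\psi(x,y,\xi)}\, a(y,\xi)\bigr)\bigr]_{y=x},
\end{equation*}
where $\widetilde\psi(x,y,\xi) = \varphi(y,\xi)-\varphi(x,\xi) - (y-x)\cdot\varphi'_x(x,\xi)$ vanishes to second order at $y=x$. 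This double vanishing guarantees that each summand has SG orders strictly decreasing in $|\alpha|$, and a Borel-type summation produces a genuine symbol $c_1\in S^{m+t,\mu+\tau}$ whose associated asymptotic series reproduces the oscillatory integral modulo a residual $r_1\in S^{-\infty,-\infty}$.

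The main obstacle will be the simultaneous SG bookkeeping in both variables: unlike in H\"ormander's local theory, the asymptotic expansion and the control on the remainder must be carried out with respect to $\langle x\rangle$ and $\langle\xi\rangle$ at the same time. This forces one to combine two families of integrations by parts — one in $\eta$, producing $\langle x-y\rangle^{-N}$ factors and hence, coupled with the SG bounds on $p$, the correct $\langle x\rangle$-decay, and one in $y$, producing $\langle\xi\rangle^{-N}$ factors through the second-order vanishing of $\widetilde\psi$ — and then to verify that the Schwartz kernel of the residual operator lies in $\cS(\R^{2n})$. Once this is settled for the first formula, $\Op_\varphi(a)\circ\Op(p)$ is treated by the mirror argument with the phase factored on the outside, and the two type II identities follow by passing to formal adjoints and applying the already-proved ones to the adjoint symbols.
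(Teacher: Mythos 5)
The paper itself does not prove Theorem \ref{thm:compi}; it is imported from \cite{Coriasco:998.1} (see also \cite{CJT4,CoPa,CT14}), so there is no in-text proof to compare your proposal against. Your sketch correctly reconstructs the Kumano-go-style composition argument used in that reference: oscillatory-integral representation of the candidate symbol $c_1$, a Kuranishi-type reduction made possible by conditions (2)--(3) in Definition~\ref{def:phase}, simplification of the resulting double symbol along the diagonal $y=x$, simultaneous integration by parts in $y$ and $\eta$ to control both the $\langle x\rangle$- and $\langle\xi\rangle$-scales, and reduction of $\Op_\varphi(a)\circ\Op(p)$ to the mirror argument and of the two type-II identities to the type-I ones by passing to formal adjoints (using that $\Op(S^{t,\tau})$ is closed under adjunction and $\Op^*_\varphi(b)=\Op_\varphi(b)^*$). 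Two small imprecisions are worth fixing. First, the expansion you display, with $\partial_\eta^\alpha p(x,\varphi'_x(x,\xi))$ factored out and $D_y^\alpha$ acting on $e^{i\widetilde\psi}a$, comes from the cruder shift $\eta\mapsto\eta+\varphi'_x(x,\xi)$ with the quadratic remainder $\widetilde\psi$ absorbed into the amplitude, not from the full Kuranishi substitution $\eta\mapsto\eta+\Psi(x,y,\xi)$ that your preceding paragraph describes; both routes work, but the write-up conflates them. Second, the assertion that ``each summand has SG orders strictly decreasing in $|\alpha|$'' overstates the gain: since $\widetilde\psi$ and $\partial_y\widetilde\psi$ vanish at $y=x$, the Fa\`a di Bruno expansion of $D_y^\alpha e^{i\widetilde\psi}|_{y=x}$ produces factors $\partial_y^\gamma\widetilde\psi|_{y=x}\in S^{1-|\gamma|,1}$ only with $|\gamma|\ge2$, so the worst terms of order $|\alpha|$ lose only about $|\alpha|/2$ in both SG orders, not $|\alpha|$. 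This is still enough for asymptotic summability after regrouping by order, which is all one needs, but the monotonicity claim as written is false. Neither point changes the soundness of the overall strategy.
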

To obtain the composition of SG FIOs of type I and type II, some more hypotheses are needed, leading to the definition
of the classes $\Phr$ and $\Phr(\tau)$ of regular SG phase functions.
\begin{definition}[Regular $SG$ phase function]\label{def:phaser}
Let $\tau\in [0,1)$ and $r>0$. A function $\varphi\in\mathcal P$ belongs to the class $\mathcal P_r(\tau)$ if it satisfies the following conditions:
\begin{enumerate}
\item $\vert\det(\varphi''_{x\xi})(x,\xi)\vert\geq r$, $\forall (x,\xi)$;
\item the function  $J(x,\xi):=\vp(x,\xi)-x\cdot\xi$ is such that
\beqs\label{hyp}
\ds\sup_{\afrac{x,\xi\in\R^n}{|\alpha+\beta|\leq 2}}\frac{|D_\xi^\alpha D_x^\beta J(x,\xi)|}{\x^{1-|\beta|}\<\xi\>^{1-|\alpha|}}\leq \tau.
\eeqs
\end{enumerate}
If only condition (1) holds, we write $\fy\in\Phr$.
\end{definition}
\begin{remark}
Notice that condition \eqref{hyp} means that $J(x,\xi)/\tau$ is bounded with constant $1$ in $S^{1,1}$. Notice also that condition (1) in Definition \ref{def:phaser} is authomatically fulfilled when condition (2) holds true for a sufficiently small $\tau\in[0,1)$.
\end{remark}

For $\ell\in\N$, we also introduce the seminorms
\[
	\|J\|_{2,\ell}:=
	\ds\sum_{2\leq |\alpha+\beta|\leq 2+\ell}\sup_{(x,\xi)\in \R^{2n}}
	\ds\frac{|D_\xi^\alpha D_x^\beta J(x,\xi)|}{\x^{1-|\beta|}\<\xi\>^{1-|\alpha|}},
\]
and
\[
\|J\|_\ell:=\ds\sup_{\afrac{x,\xi\in\R^n}{|\alpha+\beta|\leq 1}}\frac{|D_\xi^\alpha D_x^\beta J(x,\xi)|}{\x^{1-|\beta|}\<\xi\>^{1-|\alpha|}}+\|J\|_{2,\ell}.
\]
We notice that $\varphi\in\mathcal P_r(\tau)$ means that (1) of Definition \ref{def:phaser} and $\|J\|_0\leq \tau$ hold, and then we define the following subclass of the class of regular SG phase functions:
\begin{definition}\label{def:phaserell}
Let $\tau\in [0,1)$, $r>0$, $\ell \geq 0$. A function $\varphi$ belongs to the class $\mathcal P_r(\tau,\ell)$ if $\varphi\in\mathcal P_r(\tau)$ and $\|J\|_\ell\leq \tau$ for the corresponding $J$.
\end{definition}

Theorem \ref{thm:compii} below shows that the composition of SG FIOs of type I and type II 
with the same regular SG phase functions is a SG pseudodifferential operator.
\begin{theorem}\label{thm:compii}
Let $\fy\in\Phr$ and assume $a\in S^{m,\mu}(\R^{2n})$, $b\in S^{t,\tau}(\R^{2n})$. Then,
\begin{align*}
\Op _\fy(a)\circ \Op _\fy^*(b)= \Op(c_5+r_5)=\Op(c_5) \mod \Op (S^{-\infty,-\infty}),
\\[1ex]
\Op _\fy ^*(b)\circ \Op _\fy (a)=\Op(c_6+r_6)=\Op(c_6) \mod \Op (S^{-\infty,-\infty}),
\end{align*}
for some $c_j\in S^{m+t,\mu+\tau}(\R^{2n})$, $r_j\in S^{-\infty,-\infty}(\R^{2d})$, $j=5,6$.
\end{theorem}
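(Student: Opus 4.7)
The plan is to prove the first identity, $\Op_\fy(a)\circ\Op^*_\fy(b)=\Op(c_5)\mod \Op(S^{-\infty,-\infty})$; the second one follows by an entirely analogous computation applied to $\Op^*_\fy(b)\circ\Op_\fy(a)$. Using \eqref{eq:typei}--\eqref{eq:typeii} and computing the inner Fourier transform yields
\begin{equation*}
	\widehat{\Op^*_\fy(b)u}(\xi)=\int e^{-i\fy(y,\xi)}\,\overline{b(y,\xi)}\,u(y)\,dy,
\end{equation*}
so that
\begin{equation*}
	\bigl(\Op_\fy(a)\circ\Op^*_\fy(b)\bigr)u(x)=(2\pi)^{-n}\iint e^{i(\fy(x,\xi)-\fy(y,\xi))}\,a(x,\xi)\overline{b(y,\xi)}\,u(y)\,dy\,d\xi.
\end{equation*}
The task then reduces to rewriting this oscillatory integral as a standard SG pseudodifferential operator, via two main steps: first, linearise the phase by a change of variable in $\xi$; then, reduce the resulting double amplitude to a left symbol.

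For the first step, I would invoke the fundamental theorem of calculus to write
\begin{equation*}
	\fy(x,\xi)-\fy(y,\xi)=(x-y)\cdot\Phi(x,y,\xi),\quad \Phi(x,y,\xi):=\int_0^1(\partial_x\fy)(y+t(x-y),\xi)\,dt,
\end{equation*}
so that $\partial_\xi\Phi$ is an average of $\fy''_{x\xi}$ along the segment from $y$ to $x$. The regularity hypothesis $|\det\fy''_{x\xi}|\geq r$---possibly strengthened, as is customary in this framework, by assuming $\fy\in\Phr(\tau,\ell)$ for $\tau$ sufficiently small, so that $\Phi$ appears as a small perturbation of the identity coming from the $x\cdot\xi$ part of $\fy$---ensures that the map $\xi\mapsto\eta=\Phi(x,y,\xi)$ is a global diffeomorphism of $\R^n$, uniformly in $(x,y)$, with inverse $\xi=\Psi(x,y,\eta)$ satisfying $\langle\Psi\rangle\asymp\langle\eta\rangle$ and SG-type estimates in $\eta$ for all its derivatives. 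Changing variable $\xi\to\eta$ recasts the kernel as
\begin{equation*}
	(2\pi)^{-n}\int e^{i(x-y)\cdot\eta}\,\tilde c(x,y,\eta)\,d\eta,\quad \tilde c(x,y,\eta)=a(x,\Psi)\overline{b(y,\Psi)}\,|\det\partial_\eta\Psi|.
\end{equation*}

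For the second step, I would verify that $\tilde c$ is an SG double amplitude of order $(m+t,\mu+\tau)$, i.e., $|\partial_x^\alpha\partial_y^\beta\partial_\eta^\gamma\tilde c|\lesssim\langle x\rangle^{m-|\alpha|}\langle y\rangle^{t-|\beta|}\langle\eta\rangle^{\mu+\tau-|\gamma|}$, using Faà di Bruno's formula for derivatives of $\Psi$. The standard SG double-amplitude-to-symbol reduction---based on the identity $(x-y)^\alpha e^{i(x-y)\cdot\eta}=D_\eta^\alpha e^{i(x-y)\cdot\eta}$, integration by parts, and Taylor expansion in $y$ at $y=x$---then produces the asymptotic expansion
\begin{equation*}
	c_5(x,\eta)\sim\sum_{\alpha}\frac{(-i)^{|\alpha|}}{\alpha!}\,\partial_\eta^\alpha D_y^\alpha\tilde c(x,y,\eta)\big|_{y=x}\in S^{m+t,\mu+\tau},
\end{equation*}
with remainder $r_5\in S^{-\infty,-\infty}$, from which the claimed identity holds on $\cS(\R^n)$ and then extends to $\cS'(\R^n)$ and to the full scale of weighted Sobolev spaces by Theorem \ref{thm:sobcont}.

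The hard part will be establishing \emph{global} control on the change of variable in step one, namely proving that $\xi\mapsto\Phi(x,y,\xi)$ is a diffeomorphism uniformly in $(x,y)\in\R^{2n}$ and that both $\Psi$ and the Jacobian $|\det\partial_\eta\Psi|$ satisfy SG estimates with the correct asymptotic $\langle\Psi\rangle\asymp\langle\eta\rangle$. This is exactly the point where the \emph{regularity} of the SG phase function (Definition \ref{def:phaser}) plays an essential role: the quantitative bound $\|J\|_\ell\leq\tau$ with $\tau$ small is what allows one to invert $\Phi$ globally via a fixed-point/Neumann-type argument and to propagate SG estimates through the inverse, after which steps three and four reduce to routine applications of the SG symbolic calculus.
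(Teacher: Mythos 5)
Your proposal follows the standard route for composing type I and type II SG FIOs with the same regular phase: compose the kernels, linearise the phase difference $\fy(x,\xi)-\fy(y,\xi)=(x-y)\cdot\Phi(x,y,\xi)$ by the fundamental theorem of calculus, globally invert $\xi\mapsto\Phi(x,y,\xi)$, and then reduce the resulting SG double amplitude to a left symbol. This is exactly the scheme the paper itself employs in the proof of the closely related Theorem \ref{thm:parama} (the special case $a=b\equiv1$). The paper does \emph{not} prove Theorem \ref{thm:compii} — it is recalled from \cite{Coriasco:998.1} — so there is strictly speaking no internal proof to compare against, but your sketch is consistent both with that reference and with the methodology the paper displays in Section \ref{sec:sgfioprod}.

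Two points are worth sharpening. First, as you rightly flag as the ``hard part'', pointwise nonvanishing of $\det\fy''_{x\xi}$ (which is all that $\fy\in\Phr$ gives) does \emph{not} by itself guarantee that the \emph{averaged} Jacobian $\int_0^1\fy''_{x\xi}\bigl(y+t(x-y),\xi\bigr)\,dt$ is invertible: averages of invertible matrices can be singular. What the paper actually uses, in the proof of Theorem \ref{thm:parama}, is the stronger assumption $\fy\in\Phr(\tau)$ with $\tau<1/4$, which exhibits $\partial_\xi\Phi=I+\int_0^1 J''_{x\xi}\,dt$ as a small perturbation of the identity, \emph{combined with} coercivity from condition (2) of Definition \ref{def:phase}, so that a Hadamard-type argument (local diffeomorphism plus properness) yields global invertibility with SG estimates on $\Psi=\Phi^{-1}$. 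Your hedge ``possibly strengthened by $\fy\in\Phr(\tau,\ell)$'' is honest, but to make the argument airtight you should state explicitly that a smallness condition is invoked, just as the paper does for Theorem \ref{thm:parama}. Second, a small slip in the reduction: the double-amplitude-to-left-symbol expansion is $c_5(x,\eta)\sim\sum_\alpha\frac{1}{\alpha!}\,\partial_\eta^\alpha D_y^\alpha\tilde c(x,y,\eta)\big|_{y=x}$, \emph{without} the extra factor $(-i)^{|\alpha|}$ you wrote — with $D_y=-i\partial_y$ the constants already cancel. This is immaterial for the order count $S^{m+t,\mu+\tau}$ but worth getting right in a final write-up.
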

Furthermore, asymptotic formulae can be given for $c_j$, $j=1,\dots ,6$,
in terms of $\fy$, $p$, $a$ and $b$, see \cite{Coriasco:998.1}. A generalization of
Theorems \ref{thm:compi} and \ref{thm:compii} to operators defined by means of broader, generalized
SG classes was proved in \cite{CJT4, CT14}, together with similar asymptotic expansions, 
studied by means of the criteria obtained in \cite{CoTo}.
\begin{remark}\label{rem:asymp}
In particular, in Section \ref{sec:fundsol} we will make use of the following (first order) expansion of the symbol of $c_1$, coming from \cite{Coriasco:998.1}:
\beqsn
c_1(x,\xi)=p(x,\varphi'_x(x,\xi))a(x,\xi)+s(x,\xi),\quad s\in S^{m+t-1,\mu+\tau-1}(\R^{2n}).
\eeqsn
\end{remark}
Finally, when $a\in S^{m,\mu}$ is elliptic and $\fy\in\Phr$, the corresponding SG FIOs admit a parametrix, that is, 
there exist $b_1,b_2\in S^{-m,-\mu}$ such that
\begin{align}
\label{eq:parami}
\Op _\fy(a)\circ \Op _\fy^*(b_1)= \Op _\fy^*(b_1)\circ\Op _\fy(a) &= I \mod \Op (S^{-\infty,-\infty}),
\\[1ex]
\label{eq:paramii}
\Op _\fy ^*(a)\circ \Op _\fy (b_2)=\Op_\fy(b_2)\circ\Op^*_\fy(a) &= I \mod \Op (S^{-\infty,-\infty}),
\end{align}
where $I$ is the identity operator, see again \cite{Coriasco:998.1,CJT4,CT14}.

In this paper we extend the existing theory of SG FIOs, dealing with the composition of SG FIOs of type I 
with different phase functions. We then apply it to compute 
the fundamental solution to SG hyperbolic systems with coefficients of polynomial growth. 

The following result is going to be used in Sections \ref{sec:mpsgphf} and \ref{sec:fundsol}.
Given a symbol $a\in C([0,T]; S^{\epsilon,1})$ with $\epsilon\in [0,1]$, let us consider the eikonal equation
\beqs\label{eik}
\begin{cases}
\partial_t\varphi(t,s,x,\xi)=a(t,x,\varphi'_x(t,s,x,\xi)),& t\in [0,T_0]
\\
\varphi(s,s,x,\xi)=x\cdot\xi,& s\in [0,T_0],
\end{cases}
\eeqs
with $0<T_0\leq T$. By an extension of the theory developed  in \cite{Coriasco:998.2}, it is possible to
prove that the following Proposition \ref{trovala!} holds true.
\begin{proposition}\label{trovala!}
For any small enough $T_0\in[0,T]$, 
equation \eqref{eik} admits a unique solution $\varphi\in C^1([0,T_0]^2_{t,s},S^{1,1}(\R^n_{x,\xi}))$,
satisfying $J\in C^1([0,T_0]^2_{t,s},S^{\epsilon,1}(\R^n_{x,\xi}))$ and
\beqs\label{eiks}
\partial_s\varphi(t,s,x,\xi)=-a(s,\varphi'_\xi(t,s,x,\xi),\xi),
\eeqs
for any $t,s\in[0,T_0]$. Moreover, 
for every $h\geq 0$ there exists $c_h\geq 1$ and $T_h\in[0,T_0]$ such that $\varphi(t,s,x,\xi)\in\mathcal P_r(c_h|t-s|)$, 
with $\| J\|_{2,h}\leq c_h |t-s|$ for all $0\leq s\leq t\leq T_h$.
\end{proposition}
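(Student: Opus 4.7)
The plan is to adapt the SG Hamilton--Jacobi theory developed in \cite{Coriasco:998.2}, where essentially the case $\epsilon = 1$ is treated, to the regularity $a \in C([0,T]; S^{\epsilon,1})$ with $\epsilon \in [0,1]$. Writing $\varphi = x \cdot \xi + J$ transforms \eqref{eik} into the integro-differential fixed-point equation
\begin{equation*}
J(t,s,x,\xi) = \int_s^t a(\sigma, x, \xi + J'_x(\sigma,s,x,\xi))\, d\sigma,
\end{equation*}
a nonlinear perturbation of $J \equiv 0$, which I would solve by Picard iteration in the SG class $S^{\epsilon,1}$.

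Second, setting $J_0 \equiv 0$ and defining $J_{k+1}$ by plugging $J_k$ into the right-hand side above, I would prove by induction uniform SG seminorm bounds of the form $\vvvert J_k(t,s,\cdot,\cdot)\vvvert^{\epsilon,1}_\ell \leq C_\ell |t-s|$ for every $\ell \geq 0$, valid on $[0,T_0]^2$ for a sufficiently small $T_0$. The crucial ingredient is a composition estimate: once $\vvvert (J_k)'_x\vvvert^{\epsilon-1,1}_0 \leq c|t-s| < 1/2$, the bound $|(J_k)'_x(x,\xi)| \leq c|t-s|\,\langle x\rangle^{\epsilon-1}\langle\xi\rangle \leq c|t-s|\langle\xi\rangle$ yields $|\xi + (J_k)'_x| \asymp \langle\xi\rangle$ for $|\xi|$ large, so the shifted symbol $a(\sigma, x, \xi + (J_k)'_x)$ belongs to $S^{\epsilon,1}$ with seminorms controlled by those of $a$ and $(J_k)'_x$. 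This is verified via the Fa\`a di Bruno formula, exploiting that each $\xi$-derivative hitting $(J_k)'_x$ produces an extra factor of $\langle\xi\rangle^{-1}$, which matches exactly the $\langle\xi\rangle^{1-|\alpha|}$ decay demanded in $S^{\epsilon,1}$. Contraction in one low seminorm combined with uniform boundedness in all higher seminorms then yields, via a standard Arzel\`a--Ascoli / Banach fixed-point argument, a limit $J \in C^1([0,T_0]^2, S^{\epsilon,1})$, so that $\varphi = x\cdot\xi + J \in C^1([0,T_0]^2, S^{1,1})$; uniqueness follows from the same contraction applied to the difference of two solutions.

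Third, to obtain the backward equation \eqref{eiks}, I would use the Hamiltonian interpretation of $\varphi$ as the generating function of the canonical flow $\dot y(\sigma) = a_\xi(\sigma, y, \eta)$, $\dot\eta(\sigma) = -a_x(\sigma, y, \eta)$ running from time $s$ to $t$. Equivalently, one verifies directly that the right-hand side $-a(s, \varphi'_\xi, \xi)$ satisfies the same first-order ODE in $s$ as $\partial_s \varphi$, with matching value $0$ at $s = t$ (since $\varphi(t,t,x,\xi) = x\cdot\xi$ and differentiating the eikonal relation in $s$ gives the analogous Hamilton--Jacobi equation in the second time variable); uniqueness of the ODE then yields \eqref{eiks}.

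Finally, the claim $\varphi \in \mathcal P_r(c_h|t-s|)$ together with $\|J\|_{2,h} \leq c_h|t-s|$ follows from $J(s,s,\cdot,\cdot) = 0$ combined with the uniform $S^{\epsilon,1}$-bound on $\partial_t J$ obtained in the iteration: integration in $t$ produces the factor $|t-s|$ in every seminorm, hence in $\|\cdot\|_{2,h}$. The non-degeneracy condition (1) of Definition \ref{def:phaser} then follows from $\varphi''_{x\xi} = I + J''_{x\xi}$ with $|J''_{x\xi}| \leq c|t-s|$; shrinking $T_h$ makes this perturbation of the identity so small that $|\det \varphi''_{x\xi}| \geq 1/2 =: r$. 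The main technical obstacle is precisely the SG composition estimate for $a(\sigma, x, \xi + (J_k)'_x)$: one must simultaneously keep the $\langle x\rangle^{\epsilon-|\beta|}$ growth in $x$ (unaffected by the $\xi$-shift) and the $\langle\xi\rangle^{1-|\alpha|}$ decay in $\xi$ (preserved thanks to $\xi + (J_k)'_x \asymp \xi$ and the negative $\xi$-order of $(J_k)'_x$), with all constants uniform in $k$, so that the Picard sequence indeed converges in every seminorm of $S^{\epsilon,1}$.
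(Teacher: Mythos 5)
The paper does not actually present a proof of Proposition~\ref{trovala!}: it only states that the result follows ``by an extension of the theory developed in~\cite{Coriasco:998.2},'' and there (as in Kumano-go's Chapter~10, which is the template) the eikonal phase is built via the Hamiltonian flow of $a$, \emph{not} by iterating directly on $J$ in the symbol space. Your structural analysis is largely sound --- the reduction to $J=\int_s^t a(\sigma,x,\xi+J'_x)\,d\sigma$, the equivalence $\langle\xi+J'_x\rangle\asymp\langle\xi\rangle$ on the relevant time interval, the Fa\`a di Bruno control of the composed symbol in $S^{\epsilon,1}$ using $\epsilon\le 1$, and the $|t-s|$-factor in $\|J\|_{2,h}$ coming from $J(s,s)=0$ --- but the convergence mechanism you invoke does not close.

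The gap is a classical derivative loss. The map $J\mapsto\int_s^t a(\sigma,x,\xi+J'_x)\,d\sigma$ gains nothing in regularity: estimating $\vvvert J_{k+1}-J_k\vvvert_\ell^{\epsilon,1}$ forces you to control $\vvvert J_k-J_{k-1}\vvvert_{\ell+1}^{\epsilon,1}$, because the nonlinearity acts through $J'_x=\partial_x J$. So ``contraction in one low seminorm'' is false as stated: you get $\vvvert J_{k+1}-J_k\vvvert_0\lesssim|t-s|\,\vvvert J_k-J_{k-1}\vvvert_1$, and with only uniform boundedness of the higher seminorms the right-hand side does not tend to zero. Uniform a priori bounds on all $\vvvert J_k\vvvert_\ell$ do salvage \emph{existence} by Arzel\`a--Ascoli (extract a $C^\infty_{\mathrm{loc}}$-convergent subsequence and pass to the limit in the integral equation), but then \emph{uniqueness} cannot ``follow from the same contraction'' --- it doesn't exist --- and must be proved separately, e.g.\ by Gronwall along the bicharacteristics. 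The route in~\cite{Coriasco:998.2}/\cite{Kumano-go:1} circumvents all of this: one first solves the finite-dimensional Hamiltonian system $\dot q=a'_\xi(\sigma,q,p)$, $\dot p=-a'_x(\sigma,q,p)$ by ordinary Picard iteration (no derivative loss there), obtains $(q,p)$ as SG-type functions of the data with the expected seminorm bounds proportional to $|t-s|$, inverts $x=q(t,s,y,\xi)$ for $y$ via the implicit function theorem on a small time interval, and then \emph{defines} $\varphi$ through the flow; the eikonal equation, the backward equation~\eqref{eiks}, $J\in C^1([0,T_0]^2,S^{\epsilon,1})$, and $\|J\|_{2,h}\le c_h|t-s|$ all drop out of this construction, and uniqueness follows from uniqueness of the flow. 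You actually invoke this Hamiltonian picture for~\eqref{eiks}, but it should be the backbone of the entire proof, not just of the last step.
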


\noindent
In the sequel we will sometimes write $\varphi_{ts}(x,\xi):=\varphi(t,s,x,\xi)$, for a solution $\varphi$ of \eqref{eik}.

\section{Multiproducts of SG phase functions}\label{sec:mpsgphf}
\setcounter{equation}{0}
%
%
The first step in our construction is to define the multi-product of regular SG phase functions and to analyze its properties, which we perform in the present section, following mainly \cite{Kumano-go:1}.
	
Let us consider a sequence $\{\vp_j\}_{j\geq1}$ of regular SG phase functions $\vp_j(x,\xi)\in \mathcal P_r(\tau_j)$ with \beqs\label{1/4}
\ds\sum_{j=1}^\infty \tau_j=:\tau_0<1/4.
\eeqs

By Definition \ref{def:phaser} and assumption \eqref{1/4} we have that the sequence $\{J_k(x,\xi)/\tau_k\}_{k\geq 1}$ is bounded in $S^{1,1}$ and for every $\ell\in\N$
that there exists a constant $c_\ell>0$ such that 
\beqs\label{marr}
\|J_k\|_{2,\ell}\leq c_\ell\tau_k\quad\ {\rm and}\quad  \ds\sum_{k=1}^\infty\|J_k\|_{2,\ell}\leq c_\ell\tau_0.
\eeqs
Notice that from \eqref{hyp} we have $c_0=1.$ This will be useful in the proof of Theorem \ref{stime} at the end of the present section.
\begin{example}
A simple realization of a sequence $\{\vp_j\}_{j\geq1}$  satisfying \eqref{1/4} and \eqref{hyp} can be obtained using the phase function $\varphi(t,s,x,\xi)$ solving the eikonal equation \eqref{eik}. Indeed, it is sufficient to take a partition
\[s=t_{\ell+1}\leq t_\ell\leq\cdots\leq t_1\leq t_0=t,\]
of the interval $[s,t]$ and define
\[\varphi_j(x,\xi)=
\begin{cases}
\varphi(t_{j-1}, t_j,x,\xi) & 1\leq j\leq \ell+1
\\
x\cdot\xi & j\geq\ell+2.
\end{cases}\]
In fact, from Proposition \ref{trovala!} we know that $\varphi_j\in\mathcal P_r(\tau_j)$ with $\tau_j=c_0(t_{j-1}-t_j)$ for $1\leq j\leq \ell+1$ and with $\tau_j=0$ for $j\geq \ell+2$. Condition \eqref{1/4} is fulfilled if we choose $T_0$ small enough, since
\[\ds\sum_{j=1}^\infty \tau_j=\ds\sum_{j=1}^{\ell+1} c_0(t_{j-1}-t_j)=c_0(t-s)\leq c_0T_0<\frac14\]
if $T_0<(4c_0)^{-1}$. Moreover, again from Proposition \ref{trovala!}, we know that $\| J_j\|_{2,0}\leq c_0 |t_j-t_{j-1}|=\tau_j$ for all $1\leq j\leq \ell+1$ and $J_j=0$ for $j\geq \ell+2,$ so each one of the $J_j$ satisfies \eqref{hyp}. 
\end{example}

With a fixed integer $M\geq 1$, we denote 
\begin{align*}
	(X,\Xi)&=(x_0,x_1,\ldots,x_M,\xi_1,\ldots,\xi_M,\xi_{M+1}):=(x,T,\Theta,\xi),
	\\
	(T,\Theta)&=(x_1,\ldots,x_M,\xi_1,\ldots,\xi_M),
\end{align*}
and define the function of $2(M+1)n$ real variables
\beqs\label{serveilnome}
\psi(X,\Xi):=\ds\sum_{j=1}^M\left(\vp_j(x_{j-1},\xi_j)-x_j\cdot\xi_j\right)+\vp_{M+1}(x_M,\xi_{M+1}).
\eeqs
For every fixed $(x,\xi)\in\R^{2n}$, the critical points $(Y,N)=(Y,N)(x,\xi)$ 
of the function of $2Mn$ variables $\widetilde{\psi}(T,\Theta)=\psi(x,T,\Theta,\xi)$
are the solutions to the system
\[\begin{cases}
\psi'_{\xi_j}(X,\Xi)=\vp'_{j, \xi}(x_{j-1},\xi_{j})-x_j=0 & j=1,\ldots,M,
\\
\psi'_{x_j}(X,\Xi)=\vp'_{j+1, x}(x_j,\xi_{j+1})-\xi_j=0 & j=1,\ldots,M,
\end{cases}
\]
in the unknowns $(T,\Theta)$. That is $(Y,N)=(Y_1,\ldots,Y_M,N_1,\ldots,N_M)(x,\xi)$ satisfies, if $M=1$,
\beqs\label{(C1)}\begin{cases}
Y_1=\vp'_{1, \xi}(x,N_{1}) 
\\
N_1=\vp'_{2, x}(Y_1,\xi),
\end{cases}
\eeqs
or, if $M\ge2$,
\beqs\label{(C)}\begin{cases}
Y_1=\vp'_{1,\xi}(x,N_{1}) 
\\
Y_j=\vp'_{j,\xi}(Y_{j-1},N_{j}), & j=2,\ldots,M
\\
N_j=\vp'_{j+1, x}(Y_j,N_{j+1}), & j=1,\ldots,M-1
\\
N_M=\vp'_{M+1, x}(Y_M,\xi) .
\end{cases}
\eeqs
In the sequel we will only refer to the system \eqref{(C)}, tacitly meaning \eqref{(C1)} when $M=1$. Definition
\ref{mprod} below of the multi product of SG phase functions is analogous to the one given in \cite{Kumano-go:1}
for (local) symbols of H\"ormander type.
\begin{definition}[Multi-product of SG phase functions]\label{mprod}
If, for every fixed $(x,\xi)\in \R^{2n}$, the system \eqref{(C)} admits a unique solution $(Y,N)=(Y,N)(x,\xi)$, we define
\beqs\label{3.4}
\phi(x,\xi)=(\vp_1\ \sharp\ \cdots\ \sharp\ \vp_{M+1})(x,\xi):=\psi(x,Y(x,\xi),N(x,\xi),\xi).
\eeqs 
The function $\phi$ is called multi-product of the SG phase functions $\vp_1,\ldots,\vp_{M+1}.$
\end{definition}

\begin{example}\label{32}
The simplest case of a well-defined multi-product of  SG phase functions is given by the sharp product $\varphi\ \sharp\ \varphi_0$, where $\varphi\in\mathcal P_r$ and $\varphi_0(x,\xi)=x\cdot\xi.$
Indeed, the critical points $(Y,N)$ of the function 
\[
	\widetilde{\psi}(x_1,\xi_1)=\psi(x,x_1,\xi_1,\xi)=\varphi(x,\xi_1)-x_1\cdot \xi_1+x_1\cdot \xi
\]
 are given by $(Y,N)(x,\xi)=(\varphi'_\xi(x,\xi), \xi)$. The multi-product $\varphi\ \sharp\ \varphi_0$ is so defined by
\[
	\phi(x,\xi)=\psi(x,\varphi'_\xi(x,\xi),\xi, \xi)=\varphi(x,\xi)-\varphi'_\xi(x,\xi)(\xi-\xi)=\varphi(x,\xi).
\]
Similarly, the multi-product $\varphi_0\ \sharp\ \varphi$ is well defined. Indeed, the function 
\[
	\widetilde{\psi}(x_1,\xi_1)=\psi(x,x_1,\xi_1,\xi)=x\cdot\xi_1-x_1\cdot\xi_1+\varphi(x_1,\xi)
\]
has critical points $(Y,N)(x,\xi)=(x, \varphi'_x(x,\xi))$, and
\[
	\phi(x,\xi)=\psi(x, x, \varphi'_x(x,\xi),\xi)=(x-x)\cdot\varphi'_x(x,\xi)+\varphi(x,\xi)=\varphi(x,\xi).
\]
Notice that we have proved here above that for every $\varphi\in\mathcal P_r$ the identity $$\varphi\ \sharp\ \varphi_0=\varphi_0\ \sharp\ \varphi=\varphi$$ holds true. That is,  
the multi-product of SG phase functions defined in \eqref{3.4} admits the trivial phase function $\varphi_0(x,\xi)=x\cdot\xi$ as identity element.
\end{example}
\begin{example} 
A situation where \eqref{3.4} is well defined, which is interesting for applications, see Section \ref{sec:fundsol}, is given by the 
multi-product of solutions to the eikonal equation \eqref{eik} on different, neighboring time intervals.
Indeed, the critical points $(Y,N)(x,\xi)$ of the function 
\[
	\widetilde{\psi}_{tsr}(x_1,\xi_1):=\psi_{tsr}(x,x_1,\xi_1,\xi)=\varphi(t,s,x,\xi_1)-x_1\cdot\xi_1+\varphi(s,r,x_1,\xi)
\]
are given by
\beqs\label{cr}
\begin{cases}
\psi'_{rst,x_1}(x,x_1,\xi_1,\xi)=-\xi_1+\varphi'_{x}(s,r,x_1,\xi)=0
\\
\psi'_{rst,\xi_1}(x,x_1,\xi_1,\xi)=\varphi'_{\xi}(t,s,x,\xi_1)-x_1=0.
\end{cases}\eeqs
The Jacobian matrix with respect to $(x_1,\xi_1)$ of the system \eqref{cr} is
\[
J(t,s,r,x,x_1,\xi_1,\xi)=\left(
\begin{array}{cc}
\varphi''_{xx}(s,r,x_1,\xi) &-I
\\
-I& \varphi''_{\xi\xi}(t,s,x,\xi_1)
\end{array}
\right),
\] 
where $I$ is the $(n\times n)$-dimensional unit matrix.
By \eqref{eik}, $\det J(t,r,r,x,x_1,\xi_1,\xi)=1$. Thus, taking a small interval $[0,T_0]$ such that $\det J(t,s,r,x,x_1,\xi_1,\xi)>0$ for all $r,s,t$ such that $0\leq r\leq s\leq t\leq T_0$ and all $(X,\Xi)\in\R^{4n}$, by the implicit function theorem it follows that the system \eqref{cr} admits a unique solution $(Y,N)_{tsr}=(Y_{tsr},N_{tsr})(x,\xi)=(Y(t,s,r,x,\xi),N(t,s,r,x,\xi))$. 
The multi-product
\begin{align*}
\phi_{tsr}(x,\xi)&=\phi(t,s,r,x,\xi)=(\varphi_{ts}\ \sharp\ \varphi_{sr})(x,\xi)=\psi_{tsr}(x,Y_{tsr}(x,\xi),N_{tsr}(x,\xi),\xi)
\\
&=\varphi(t,s,x,N_{tsr}(x,\xi))-Y_{tsr}(x,\xi)\cdot N_{tsr}(x,\xi)+\varphi(s,r,Y_{tsr}(x,\xi),\xi)
\end{align*}
is then well defined. Moreover, it is quite simple to show, in view of to Proposition \ref{trovala!}, that the multi-product $\varphi_{ts}\ \sharp\ \varphi_{sr}$ satisfies the associative law
\beqs\label{ass}
\varphi_{ts}\ \sharp\ \varphi_{sr}=\varphi_{tr}, \quad 0\leq r\leq s\leq t\leq T_0.
\eeqs
Indeed, $\phi(t,s,r, x,\xi)$ does not depend on $s$:
\begin{align*}
\frac{d}{ds}[\phi(t,s,r,x,\xi)]\!&=\!(\partial_s\varphi)(t,s,x,N_{tsr}(x,\xi))\!+\!
\varphi'_\xi(t,s,x,N_{tsr}(x,\xi))\!\cdot\!(\partial_s N)(t,s,r,x,\xi)
\\
&-(\partial_s Y)(t,s,r,x,\xi)\cdot N(t,s,r,x,\xi)-Y(t,s,r,x,\xi)\cdot(\partial_s N)(t,s,r,x,\xi)
\\
&+(\partial_t\varphi)(s,r,Y_{tsr}(x,\xi),\xi)+\varphi'_x(s,r,Y_{tsr}(x,\xi),\xi)\cdot(\partial_s Y)(t,s,r,x,\xi)=0,
\end{align*}
since, by \eqref{eik}, \eqref{eiks} and the definition \eqref{cr} of the critical point $(Y,N)_{tsr}$, we have
\begin{align*}
	\varphi'_x(s,r,Y_{tsr}(x,\xi),\xi)&=N(t,s,r,x,\xi),
	\\
	\varphi'_\xi(t,s,x,N_{tsr}(x,\xi))&=Y(t,s,r,x,\xi),
	\\
	(\partial_t\varphi)(s,r,Y_{tsr}(x,\xi),\xi)&=\phantom{-}a(s,Y_{tsr}(x,\xi),\varphi^\prime_x(s,r,Y_{tsr}(x,\xi),\xi))
	\\
	&=\phantom{-}a(s,Y_{tsr}(x,\xi),N_{tsr}(x,\xi)),
	\\
	(\partial_s\varphi)(t,s,x,N_{tsr}(x,\xi))&=-a(s,\varphi^\prime_\xi(t,s,x,N_{tsr}(x,\xi)),N_{tsr}(x,\xi))
	\\
	&=-a(s,Y_{tsr}(x,\xi),N_{tsr}(x,xi)).
\end{align*}
This gives, with $\varphi_0(x,\xi)=x\cdot\xi$,
\begin{align*}
(\varphi_{ts}\ \sharp\ \varphi_{sr})(x,\xi)&=\phi(t,s,r,x,\xi)
=\phi(t,r,r,x,\xi)=(\varphi_{tr}\ \sharp\ \varphi_{rr})(x,\xi)=(\varphi_{tr}\ \sharp\ \varphi_0)(x,\xi)
\\
&= \varphi_{tr}(x,\xi),
\end{align*}
by Example \ref{32}, as claimed.
\end{example}

Now we want to show that under assumption \eqref{1/4} the multi-product $\phi(x,\xi)$ of Definition \ref{mprod} is well defined on $\R^{2n}$, and it is a regular SG phase function itself. 
To this aim, we switch from the system \eqref{(C)} in the unknown $(Y,N)$ to the equivalent system \eqref{(C')} in the unknown 
$(\bar Y,\bar N)=(y_1,\ldots,y_M, \eta_1,\ldots,\eta_M)\in\R^{2Mn}$ as follows. Define 
\beqs\label{zj}
\begin{cases}
z^0:=0
\\
z^j:=\sum_{k=1}^j y_k\,, & j=1,\ldots,M
\\
\zeta^j:=\sum_{k=j}^M \eta_k\,, & j=1,\ldots,M
\\
\zeta^{M+1}:=0,
\end{cases}
\eeqs
and then consider the system 
\beqs\label{(C')}\begin{cases}
y_k=J'_{k, \xi}(x+z^{k-1},\xi+\zeta^{k}), & k=1,\ldots,M
\\
\eta_k=J'_{k+1, x}(x+z^k,\xi+\zeta^{j+1}), & k=1,\ldots,M.
\end{cases}
\eeqs
We have that:
\begin{lemma}
For every fixed $(x,\xi)\in\R^{2n}$, $(Y,N)(x,\xi)$ is a solution of \eqref{(C)} if and only if 
$(\bar Y,\bar N)(x,\xi)=(y_1,\ldots,y_M,\eta_1,\ldots,\eta_m)(x,\xi)$, defined by 
\beqs\label{star}
\begin{cases}
y_1=Y_1-x
\\
y_j=Y_j-Y_{j-1} & j=2,\ldots,M
\\
\eta_j=N_j-N_{j+1} & j=1,\ldots,M-1
\\
\eta_M=N_M-\xi,
\end{cases}
\eeqs
is a solution of \eqref{(C')}.
\end{lemma}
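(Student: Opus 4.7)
The plan is straightforward: show that the substitution \eqref{star} is a bijective change of variables that turns \eqref{(C)} into \eqref{(C')} equation by equation. Since the map $(Y,N)\mapsto(\bar Y,\bar N)$ defined by \eqref{star} is plainly invertible (its inverse is $Y_j=x+y_1+\cdots+y_j$, $N_j=\xi+\eta_M+\cdots+\eta_j$), it suffices to verify that each equation of one system becomes the corresponding equation of the other under the substitution.

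First I would identify the partial sums $z^{k-1}$ and $\zeta^{k}$ from \eqref{zj} in terms of $(Y,N)$. Adopting the natural boundary conventions $Y_0:=x$ and $N_{M+1}:=\xi$, a telescoping argument based on \eqref{star} gives
\[
	z^k = Y_k - x \quad (k=0,1,\ldots,M), \qquad \zeta^k = N_k - \xi \quad (k=1,\ldots,M+1),
\]
consistently with $z^0=0$ and $\zeta^{M+1}=0$ as prescribed in \eqref{zj}. Consequently, the arguments appearing on the right-hand side of \eqref{(C')} reduce to $(x+z^{k-1},\xi+\zeta^{k})=(Y_{k-1},N_k)$ and $(x+z^{k},\xi+\zeta^{k+1})=(Y_k,N_{k+1})$.

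Next, since $J_k(x,\xi)=\varphi_k(x,\xi)-x\cdot\xi$ as in \eqref{serveilnome}, we have $J'_{k,\xi}(x,\xi)=\varphi'_{k,\xi}(x,\xi)-x$ and $J'_{k,x}(x,\xi)=\varphi'_{k,x}(x,\xi)-\xi$. Substituting these identities together with the identifications above into \eqref{(C')}, the $k$-th $y$-equation reads
\[
	y_k = \varphi'_{k,\xi}(Y_{k-1},N_k) - Y_{k-1},
\]
which, using $y_k=Y_k-Y_{k-1}$ from \eqref{star} (and $Y_0=x$ when $k=1$), is equivalent to $Y_k=\varphi'_{k,\xi}(Y_{k-1},N_k)$, i.e., the $k$-th $Y$-equation of \eqref{(C)}. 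In the same way, $\eta_k = \varphi'_{k+1,x}(Y_k,N_{k+1}) - N_{k+1}$, combined with $\eta_k=N_k-N_{k+1}$ from \eqref{star} (and $N_{M+1}=\xi$ when $k=M$), is equivalent to $N_k=\varphi'_{k+1,x}(Y_k,N_{k+1})$, i.e., the $k$-th $N$-equation of \eqref{(C)}. Reading these computations backwards proves the converse implication.

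I do not expect any genuine obstacle here: the argument is purely algebraic bookkeeping. The only care required is the consistent handling of the boundary indices, namely matching the conventions $Y_0=x$, $N_{M+1}=\xi$ with the prescribed values $z^0=0$, $\zeta^{M+1}=0$, so that the telescoping identifications remain valid also for $k=1$ in the first block and $k=M$ in the second block of \eqref{(C)} and \eqref{(C')}.
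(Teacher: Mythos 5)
Your proof is correct and follows essentially the same route as the paper: substitute \eqref{star} into \eqref{zj} to obtain $Y_j=x+z^j$, $N_j=\xi+\zeta^j$ (the paper's relation \eqref{simplificationH}), then use $J_k=\varphi_k-x\cdot\xi$ to rewrite each equation and match the two systems term by term. Your explicit remark that the change of variables is invertible is a welcome, if minor, clarification justifying the ``if and only if,'' but it does not constitute a different argument.
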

\begin{proof}
Substituting \eqref{star} in \eqref{zj}, we immediately get the relation
\beqs\label{simplificationH}\begin{cases}
Y_j=x+z^j
\\
N_j=\xi+\zeta^j.
\end{cases}\eeqs
By this, it follows that $(Y,N)$ is a solution of \eqref{(C)} if and only if 
\[
\begin{cases}
x+z^j=\vp'_{j, \xi}(x+z^{j-1},\xi+\zeta^{j}) & j=1,\ldots,M
\\
\xi+\zeta^j=\vp'_{j+1, x}(x+z^j,\xi+\zeta^{j+1}) & j=1,\ldots,M;
\end{cases}
\]
by substituting $\vp_j(x,\xi)=J_j(x,\xi)+x\cdot\xi$ we obtain
\[
\begin{cases}
z^j-z^{j-1}=J'_{j, \xi}(x+z^{j-1},\xi+\zeta^{j}) & j=1,\ldots,M
\\
\zeta^j-\zeta^{j+1}=J'_{j+1, x}(x+z^j,\xi+\zeta^{j+1}) & j=1,\ldots,M,
\end{cases}
\]
which is exactly \eqref{(C')}, in view of \eqref{zj}.
\end{proof}
\noindent
We are then reduced to prove the following Theorem \ref{exists}.
\begin{theorem}\label{exists}
Under the assumption \eqref{1/4}, for every fixed $(x,\xi)\in\R^{2n}$ there exists a unique solution $(\bar Y, \bar N)(x,\xi)$ of \eqref{(C')}. Moreover, the solution $(\bar Y, \bar N)$ satisfies
\beqs\label{yeta} |y_k|\leq\frac 43\tau_k\x,\qquad |\eta_k|\leq \frac43\tau_{k+1 }\<\xi\>,\quad k=1,\ldots,M,
\eeqs
and the functions $z_j$ and $\zeta_j$ in \eqref{zj} satisfy
\beqs\label{zzeta} |z^j|\leq \frac13\x,\qquad |\zeta^j|\leq \frac13\<\xi\>, \quad j=1,\ldots,M.
\eeqs
\end{theorem}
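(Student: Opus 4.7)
The plan is to recast \eqref{(C')} as a fixed-point equation and to solve it by Banach's theorem on a suitable closed subset of $\R^{2Mn}$. For fixed $(x,\xi)\in\R^{2n}$ let $T:\R^{2Mn}\to\R^{2Mn}$ be the smooth map sending $(\bar Y,\bar N)$ to the vector of components
\[
y'_k := J'_{k,\xi}(x+z^{k-1},\xi+\zeta^k),\qquad \eta'_k := J'_{k+1,x}(x+z^k,\xi+\zeta^{k+1}),\qquad k=1,\dots,M,
\]
where $z^j,\zeta^j$ are built from $(\bar Y,\bar N)$ through \eqref{zj}; the fixed points of $T$ are exactly the solutions of \eqref{(C')}. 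Introduce the closed, convex, bounded set
\[
B := \{(\bar Y,\bar N)\in\R^{2Mn} : |y_k|\le\tfrac43\tau_k\x,\ |\eta_k|\le\tfrac43\tau_{k+1}\csi,\ k=1,\dots,M\},
\]
so that \eqref{yeta} holds automatically for every fixed point of $T$ lying in $B$.

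The first step is to show $T(B)\sb B$, which simultaneously produces \eqref{zzeta}. For $(\bar Y,\bar N)\in B$, the triangle inequality and \eqref{1/4} give
\[
|z^j|\le\sum_{k=1}^j|y_k|\le\tfrac43\x\sum_{k=1}^M\tau_k\le\tfrac43\tau_0\x\le\tfrac13\x,
\]
and, symmetrically, $|\zeta^j|\le\tfrac13\csi$. The elementary computation
\[
\<x+z^{k-1}\>^2 = 1+|x|^2+2x\cdot z^{k-1}+|z^{k-1}|^2 \le \x^2+\tfrac23\x^2+\tfrac19\x^2 = \tfrac{16}9\x^2
\]
then yields $\<x+z^{k-1}\>\le\tfrac43\x$, and analogously $\<\xi+\zeta^k\>\le\tfrac43\csi$. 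Combined with the first-derivative bound $|J'_{k,\xi}(u,v)|\le\tau_k\<u\>$ (the special case $\a=e_i$, $\b=0$ of condition (2) of Definition \ref{def:phaser}), this gives $|y'_k|\le\tau_k\<x+z^{k-1}\>\le\tfrac43\tau_k\x$, and symmetrically for $\eta'_k$.

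For the contraction step I would endow $\R^{2Mn}$ with the weighted norm $\|(\bar Y,\bar N)\|_*:=\x^{-1}\sum_k|y_k|+\csi^{-1}\sum_k|\eta_k|$. Given two elements of $B$, the fundamental theorem of calculus expresses the difference of the $y'_k$-components as an integral of $J''_{k,\xi x}(z^{k-1}-z'^{k-1})+J''_{k,\xi\xi}(\zeta^k-\zeta'^k)$ along the segment joining the two arguments; by convexity of $B$ that segment stays in the region governed by the previous estimates. The second-order bounds $|J''_{k,\xi x}|\le\tau_k$ and $|J''_{k,\xi\xi}|\le\tau_k\<\cdot\>\<\cdot\>^{-1}$ from Definition \ref{def:phaser}, together with a constant lower bound $\<\xi+\zeta^k\>\ge c\csi$ obtained in the same elementary way from $|\zeta^k|\le\tfrac13\csi$, then produce
\[
|y'_k-y''_k| \le C\tau_k\bigl(|z^{k-1}-z'^{k-1}| + \x\csi^{-1}|\zeta^k-\zeta'^k|\bigr),
\]
together with a symmetric estimate for the $\eta$-components. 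Dividing by the appropriate weight, summing in $k$, and interchanging the double sums via $\sum_k\tau_k\le\tau_0$ collapses to $\|T(\bar Y,\bar N)-T(\bar Y',\bar N')\|_*\le C'\tau_0\|(\bar Y-\bar Y',\bar N-\bar N')\|_*$. The assumption $\tau_0<1/4$ is precisely what forces $C'\tau_0<1$, so $T|_B$ is a strict contraction and Banach's theorem supplies the unique solution in $B$. Uniqueness in all of $\R^{2Mn}$ follows by a short a priori bootstrap: any solution satisfies $|y_k|\le\tau_k\<x+z^{k-1}\>\le\tau_k(\x+|z^{k-1}|)$, and summing over $k$ via Fubini yields $\sum_k|y_k|/\x\le\tau_0/(1-\tau_0)<1/3$, so every solution lies in $B$. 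The delicate point I anticipate is pinning down $C'$ so that $C'\tau_0<1$ holds under $\tau_0<1/4$: the $J''_{\xi\xi}$ and $J''_{xx}$ contributions carry opposite weight ratios $\x/\csi$ and $\csi/\x$, and the norm $\|\cdot\|_*$ has to be designed so that both are absorbed simultaneously.
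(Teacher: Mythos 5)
Your proposal is correct and follows essentially the same route as the paper: recast \eqref{(C')} as a fixed-point equation for the same map $T$, use the weighted $\ell^1$-type norm $\<x\>^{-1}\sum|y_k|+\<\xi\>^{-1}\sum|\eta_k|$, establish invariance of a suitable set via the triangle inequality for $\<\cdot\>$ and the bound $|J'_k|\le\tau_k\<\cdot\>$, and obtain contraction from the mean-value theorem and the second-order bounds on $J_k$ with $\tau_0<1/4$ forcing the Lipschitz constant below $1$. The only cosmetic difference is that you work on the smaller set $B$ defined by the sharper componentwise bounds of \eqref{yeta}, whereas the paper uses the aggregate set $\Sigma$ and recovers \eqref{yeta} afterwards by an a priori estimate — the two variants are interchangeable here.
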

\begin{remark}\label{booh}
We aim at obtaining a  solution $(Y,N)$ such that $\phi=\psi(.,Y,N,..)\in \mathcal P_r(\tau)$. 
By Definition \ref{mprod}, recalling that $\psi$ a smooth function, it is enough to show
that $(Y,N)$ is of class $C^\infty(\R^{2n})$, that $Y_j\in S^{1,0}$, $N_j\in S^{0,1}$, and that 
$\<Y_j(x,\xi)\>\asymp\x$ as $|x|\to\infty$,  $\<N_j(x,\xi)\>\asymp\<\xi\>$ as $|\xi|\to\infty$. To get these last equivalences, it is sufficient to prove the existence of a constant $k\in(0,1)$ such that $|Y_j(x,\xi)-x|\leq k\x$ and $|N_j(x,\xi)-\xi|\leq k\<\xi\>$. Indeed, the following implication holds:
\begin{equation}\label{palla}
|b|\leq k\<a\>,\; k\in (0,1),\ a,b\in\R^n \Longrightarrow (1-k)\<a\>\leq \<a+b\>\leq (1+k)\<a\>.
\end{equation}
Formula \eqref{zzeta} gives precisely the desired estimates, with $k=1/3$, owing to \eqref{simplificationH}. Theorem \ref{exists} then ensures that the multi-product is well-defined. We show that 
$(Y,N)\in C^\infty(\R^{2n})$ in the subsequent Theorem \ref{cinfty}.
\end{remark}

\begin{proof}[Proof of Theorem \ref{exists}]
We divide the proof into two steps. In step one we suppose the existence of a solution $(\bar Y, \bar N)$ of \eqref{(C')} and prove that such solution satisfies \eqref{yeta} and that \eqref{zzeta} holds. In step two we show, by a fixed point argument, the existence and uniqueness of the solution $(\bar Y, \bar N)$.
\\
\emph{Step 1.} If $(\bar Y, \bar N)$ is a solution of \eqref{(C')}, then by  \eqref{(C')} and \eqref{hyp} we get, 
for any $(x,\xi)\in\R^{2n}$,
\[\begin{cases}
|y_k|\leq \tau_k\<x+z^{k-1}\>
\\
|\eta_k|\leq \tau_{k+1}\<\xi+\zeta^{k+1}\>
\end{cases}\]
for $k=1,\ldots,M$. Now, using the inequality
\begin{equation}\label{stellina}
\<x+y\>\leq \x+|y|\,\;\forall x,y\in\R^n 
\end{equation}
and definition \eqref{zj}, we get, for $k=1,\ldots,M$ and any $(x,\xi)\in\R^{2n}$,
\beqs\label{ee}\begin{cases}
|y_k|\leq \tau_k\left(\x+|z^{k-1}|\right)\leq \tau_k\left(\x+\ds\sum_{j=1}^M|y_j|\right),
\\
|\eta_k|\leq \tau_{k+1}\left(\<\xi\>+|\zeta^{k+1}|\right)\leq \tau_{k+1}\left(\<\xi\>+\ds\sum_{j=1}^M|\eta_j|\right),
\end{cases}
\eeqs
so that
\beqs\label{eee}
\begin{cases}
\ds\sum_{k=1}^M|y_k|\leq \ds\sum_{k=1}^M\tau_k\left(\x+\ds\sum_{k=1}^M|y_k|\right)=:\bar \tau_M\left(\x+\ds\sum_{k=1}^M|y_k|\right),
\\
\ds\sum_{k=1}^M|\eta_k|\leq \ds\sum_{k=1}^M\tau_{k+1}\left(\<\xi\>+\ds\sum_{k=1}^M|\eta_k|\right)=:\bar\tau_{M+1}\left(\<\xi\>+\ds\sum_{k=1}^M|\eta_k|\right).
\end{cases}
\eeqs
The two inequalities here above are of the form $\alpha\leq \tau(\x+\alpha)$ with $\tau< \tau_0<1/4$ by assumption \eqref{1/4}, so they give 
\[\alpha\leq \frac{\tau}{1-\tau}\x<\frac13\x, \]
and, coming back to \eqref{eee}, we have, for any $(x,\xi)\in\R^{2n}$,
\[
|z^k|\leq\ds\sum_{j=1}^k|y_j|< \frac13\x, \qquad
|\zeta^k|\leq \ds\sum_{j=k}^M|\eta_j|< \frac13\<\xi\>,
\]
that is \eqref{zzeta}. Substituting in \eqref{ee} we obtain 
\[|y_k|\leq \tau_k\left(\x+\frac13\x\right)=\frac43\tau_k\x,\qquad |\eta_k|\leq \tau_{k+1}\left(\<\xi\>+\frac13\<\xi\>\right)=\frac43\tau_{k+1}\<\xi\>,\]
that is \eqref{zzeta}.
\\
\emph{Step 2.} Since we have shown that every solution $(\bar Y, \bar N)$ of \eqref{(C')} satisfies \eqref{zzeta} for
any $(x,\xi)\in\R^{2n}$, to show existence and uniqueness of a solution to \eqref{(C')} in $\R^{2Mn}$ it is sufficient to show existence and uniqueness of $(\bar Y, \bar N)$ in the space
\[
\Sigma=\Sigma_{x,\xi}
:=\left\{(y_1,\ldots,y_M,\eta_1,\ldots,\eta_M)\in\R^{2Mn}:\  \ds\sum_{k=1}^M|y_k\vert\leq \frac13\x,\ \ds\sum_{k=1}^M|\eta_k|\leq \frac13\<\xi\>\right\},
\]
$(x,\xi)\in\R^{2n}$, which is a metric space with norm 
\[\|(y_1,\ldots,y_M,\eta_1,\ldots,\eta_M)\|_\Sigma:=\sum_{k=1}^M\left(\x^{-1}|y_k|+\<\xi\>^{-1}|\eta_k|\right).\]
We define the map
\[T=T_{x,\xi}:\Sigma\longrightarrow \Sigma\]
by $T(y_1,\ldots,y_M,\eta_1,\ldots,\eta_M):=(w_1,\ldots,w_M,\omega_1,\ldots,\omega_M)$, where, for $ k=1,\ldots, M$,
$(x,\xi)\in\R^{2n}$,
\[\begin{cases}
w_k=J'_{k, \xi}(x+z^{k-1},\xi+\zeta^{k}) 
\\
\omega_k=J'_{k+1, x}(x+z^k,\xi+\zeta^{k+1}).
\end{cases}
\]
The map $T$ is well defined. Indeed, by \eqref{hyp}, \eqref{stellina} and \eqref{zzeta} we have, for any $(x,\xi)\in\R^{2n}$,
\beqs
\begin{cases}
|w_k|\leq \tau_k\<x+z^{k-1}\>\leq \tau_k(\x+\frac13\x)=\frac43\tau_k\x
\\
|\omega_k|\leq \tau_{k+1}\<\xi+\zeta^{k+1}\>\leq \tau_{k+1}(\<\xi\>+\frac13\<\xi\>)=\frac43\tau_{k+1}\<\xi\>,
\end{cases}
\eeqs
so that
$$\ds\sum_{k=1}^M|w_k|\leq \frac43\x\cdot\ds\sum_{k=1}^M\tau_k<\frac13\x,\ \quad{\rm and}\quad\ \ds\sum_{k=1}^M|\omega_k|\leq \frac43\<\xi\>\cdot\ds\sum_{k=1}^M\tau_{k+1}<\frac13\<\xi\>.$$
By \eqref{(C')}, to show existence and uniqueness of $(\bar Y, \bar N)=(\bar Y, \bar N)(x,\xi)$ is equivalent to show existence and uniqueness of a fixed point $(\bar Y, \bar N)$ of the map $T$. We show here below that, under assumption \eqref{1/4}, $T$ is a contraction on $\Sigma$, so it admits a unique fixed point $(\bar Y,\bar N).$
\\

Let us consider two arbitrary points 
\[
(Y,N)=(y_1,\ldots,y_M,\eta_1,\ldots,\eta_M), 
\;
(\widetilde Y,\widetilde N)=(\widetilde y_1,\ldots,\widetilde y_M,\widetilde\eta_1,\ldots,\widetilde\eta_M)\in \Sigma,
\]
and let 
\[
T(Y,N)=(w_1,\ldots,w_M,\omega_1,\ldots,\omega_M), \;
T(\widetilde Y,\widetilde N)=(\widetilde w_1,\ldots,\widetilde w_M,\widetilde \omega_1,\ldots,\widetilde \omega_M).
\]
For every fixed $k=1,\ldots,M$, $(x,\xi)\in\R^{2n}$, we have
\beqsn
\widetilde w_k-w_k&=&J'_{k, \xi}(x+\widetilde z^{k-1},\xi+\widetilde \zeta^{k}) -J'_{k, \xi}(x+z^{k-1},\xi+\zeta^{k}) 
\\
&=&(\widetilde z^{k-1}-z^{k-1})\ds\int_0^1J''_{k, \xi x}(x+z^{k-1}+\theta(\widetilde z^{k-1}-z^{k-1}),\xi+\zeta^{k})d\theta
\\
&+&
 (\widetilde\zeta^{k}-\zeta^{k})\ds\int_0^1J''_{k, \xi \xi}(x+z^{k-1},\xi+\zeta^{k}+\theta(\widetilde\zeta^{k}-\zeta^{k}))d\theta
\eeqsn
and from \eqref{hyp} we get
\beqsn
|\widetilde w_k-w_k|&\leq &\tau_k\left(|\widetilde z^{k-1}-z^{k-1}|+|\widetilde\zeta^{k}-\zeta^{k}|\<x+z^{k-1}\>\ds\int_0^1\!\!\!\<\xi+\zeta^{k}+\theta(\widetilde\zeta^{k}-\zeta^{k})\>^{-1}d\theta\right).
\eeqsn
By inequality \eqref{palla} with $b=z^k$ and $k=1/3$ we get $\frac23\x\leq\< x+z^k\>\leq\frac43\x$; the same inequality with $b=\zeta^k+\theta(\widetilde\zeta^k-\zeta^k)$ and $k=1/3$ gives $\frac23\<\xi\>\leq\< \xi+\zeta^k+\theta(\widetilde\zeta^k-\zeta^k)\>\leq\frac43\<\xi\>$; substituting these inequalities into the estimate of $|\widetilde w_k-w_k|$ we come to
\beqsn
|\widetilde w_k-w_k|&\leq &\tau_k\left(|\widetilde z^{k-1}-z^{k-1}|+|\widetilde\zeta^{k}-\zeta^{k}|2\x\ds\int_0^1\!\!\!\<\xi\>^{-1}d\theta\right)
\\
&\leq& \tau_k\ds\sum_{j=1}^M\left(|\widetilde y_j-y_j|+|\widetilde\eta_{j}-\eta_{j}|2\x\<\xi\>^{-1}\right).
\eeqsn
Similarly:
\beqsn
|\widetilde \omega_k-\omega_k|&\leq &|\widetilde z^{k}-z^{k}|\left\vert\ds\int_0^1J''_{k+1, xx}(x+z^{k}+\theta(\widetilde z^{k}-z^{k}),\xi+\zeta^{k+1})d\theta\right\vert
\\
&+&
|\widetilde\zeta^{k+1}-\zeta^{k+1}|\left\vert\ds\int_0^1J''_{k+1, x,\xi}(x+z^{k},\xi+\zeta^{k+1}+\theta(\widetilde\zeta^{k+1}-\zeta^{k+1}))d\theta\right\vert
\\
&\leq & \tau_{k+1}\left(|\widetilde z^{k}-z^{k}|2\x^{-1}\<\xi\>+|\widetilde\zeta^{k+1}-\zeta^{k+1}|\right)
\\
&\leq & \tau_{k+1}\ds\sum_{j=1}^M\left(|\widetilde y_{j}-y_{j}|2\x^{-1}\<\xi\>+|\widetilde\eta_{k+1}-\eta_{k+1}|\right).
\eeqsn
Thus
\beqsn
\|T(Y,N)-T(\widetilde Y,\widetilde N)\|_\Sigma&=&\sum_{k=1}^M\left(\x^{-1}|\widetilde w_k-w_k|+\<\xi\>^{-1}|\widetilde\omega_k-\omega_k|\right)
\\
&\leq &\sum_{k=1}^M\left(
\tau_k\ds\sum_{j=1}^M\left(\x^{-1}|\widetilde y_j-y_j|+2\<\xi\>^{-1}|\widetilde\eta_{j}-\eta_{j}|\right)\right.
\\
&&\left.+\tau_{k+1}\ds\sum_{j=1}^M\left(|\widetilde y_{j}-y_{j}|2\x^{-1}+|\widetilde\eta_{j}-\eta_{j}|\<\xi\>^{-1}\right)
\right)
\\
&\leq &\ds\sum_{k=1}^M\max\{\tau_k,\tau_{k+1}\}3\ds\sum_{j=1}^M\left(|\widetilde y_{j}-y_{j}|\x^{-1}+|\widetilde\eta_{j}-\eta_{j}|\<\xi\>^{-1}\right)
\\
&\leq & 3\tau_0\|(Y,N)-(\widetilde Y,\widetilde N)\|_\Sigma.
\eeqsn
This shows that the map $T$ is Lipschitz continuous, with Lispchitz constant $3\tau_0<1$. It follows that $T$ is a strict 
contraction on $\Sigma$, which then admits a unique fixed point $(\bar Y, \bar N)\in\Sigma$, 
for any $(x,\xi)\in\R^{2n}$. Such fixed point obviously gives the unique solution of \eqref{(C')}. The proof is complete.
\end{proof}
\begin{theorem}\label{cinfty}
The unique solution $(\bar Y, \bar N)=(\bar Y, \bar N)(x,\xi)$ of $\eqref{(C')}$ is of class $C^\infty(\R^{2n}).$
\end{theorem}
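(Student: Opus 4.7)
The plan is to apply the implicit function theorem to the equation
\[
F(x,\xi;Y,N) := (Y,N) - T_{x,\xi}(Y,N) = 0,
\]
viewed as an equation for $(Y,N) \in \R^{2Mn}$ depending on the parameter $(x,\xi) \in \R^{2n}$, where $T_{x,\xi}$ is the contraction constructed in Step 2 of the proof of Theorem \ref{exists}. Since each $\varphi_j$, and hence each $J_j$, is smooth, and $T_{x,\xi}(Y,N)$ is a finite sum of compositions of derivatives of the $J_j$'s with the smooth partial-sum maps $(Y,N) \mapsto (z^j,\zeta^j)$, the map $F$ is $C^\infty$ jointly in all of its arguments.

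Fix an arbitrary $(x_0,\xi_0) \in \R^{2n}$, and let $(\bar Y,\bar N) = (\bar Y,\bar N)(x_0,\xi_0) \in \Sigma_{x_0,\xi_0}$ be the unique fixed point provided by Theorem \ref{exists}. The partial derivative of $F$ with respect to $(Y,N)$ at $(x_0,\xi_0;\bar Y,\bar N)$ is
\[
D_{(Y,N)} F(x_0,\xi_0;\bar Y,\bar N) = I - D_{(Y,N)} T_{x_0,\xi_0}(\bar Y,\bar N).
\]
The Lipschitz estimate with constant $3\tau_0 < 1$ obtained at the end of Step 2 of Theorem \ref{exists}, with respect to the norm $\|\cdot\|_\Sigma$ (which is equivalent, at fixed $(x_0,\xi_0)$, to the Euclidean norm on $\R^{2Mn}$), implies that the operator norm of $D_{(Y,N)}T_{x_0,\xi_0}$ at $(\bar Y,\bar N)$ is bounded by $3\tau_0 < 1$ in that norm. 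Consequently $I - D_{(Y,N)} T_{x_0,\xi_0}(\bar Y,\bar N)$ is invertible by the Neumann series.

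Applying the (smooth) implicit function theorem at $(x_0,\xi_0;\bar Y,\bar N)$, we obtain a neighborhood $U$ of $(x_0,\xi_0)$ and a $C^\infty$ map $U \ni (x,\xi) \mapsto (\widetilde Y, \widetilde N)(x,\xi)$ such that $F(x,\xi;\widetilde Y(x,\xi), \widetilde N(x,\xi)) = 0$, i.e., $(\widetilde Y,\widetilde N)(x,\xi)$ is a fixed point of $T_{x,\xi}$. After possibly shrinking $U$, continuity ensures $(\widetilde Y,\widetilde N)(x,\xi) \in \Sigma_{x,\xi}$, so by the uniqueness part of Theorem \ref{exists} we have $(\widetilde Y,\widetilde N)(x,\xi) = (\bar Y,\bar N)(x,\xi)$ on $U$. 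Hence $(\bar Y,\bar N)$ is $C^\infty$ on $U$, and since $(x_0,\xi_0)$ was arbitrary, $(\bar Y,\bar N) \in C^\infty(\R^{2n})$.

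The only delicate point is the invertibility of $I - D_{(Y,N)}T_{x_0,\xi_0}$: one must make sure that the Lipschitz bound $3\tau_0<1$ of Step 2 really yields an operator-norm bound on the derivative (and not merely on difference quotients) at the fixed point. This is standard — the Lipschitz constant dominates the operator norm of the derivative at every point where it exists — but it is the one step that requires care.
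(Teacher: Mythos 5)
Your proof is correct and shares its high-level structure with the paper's (both apply the implicit function theorem to the map $F=(Y,N)-T_{x,\xi}(Y,N)$ and then identify the local implicit solution with $(\bar Y,\bar N)$ via uniqueness of the fixed point), but the key step — invertibility of the Jacobian $I-D_{(Y,N)}T_{x,\xi}$ — is handled quite differently. The paper writes out the $2M\times 2M$ block matrix $\left(\begin{smallmatrix} I-H_{11} & -H_{12} \\ -H_{21} & I-H_{22} \end{smallmatrix}\right)$, estimates each block's matrix norm using \eqref{hyp}, performs a diagonal similarity transformation by $\mathrm{diag}(\langle\xi\rangle\langle x\rangle^{-1}I,\,\langle x\rangle\langle\xi\rangle^{-1}I)$ to reduce to $\det(I-A)$ with $\|A\|\leq 3\tau_0<3/4$, and then invokes Proposition \ref{5.3} (Kumano-go) to get the explicit uniform lower bound $\det(I-A(x,\xi))\geq 4^{-2M}>0$. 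You instead recycle the Lipschitz estimate $\|T(Y,N)-T(\widetilde Y,\widetilde N)\|_\Sigma\leq 3\tau_0\|(Y,N)-(\widetilde Y,\widetilde N)\|_\Sigma$ already proved in Step 2 of Theorem \ref{exists}: since $(\bar Y,\bar N)(x_0,\xi_0)$ lies in the interior of $\Sigma_{x_0,\xi_0}$ (the inequalities in \eqref{zzeta} are strict, as $\tau_0/(1-\tau_0)<1/3$), the Lipschitz constant $3\tau_0$ dominates $\|D_{(Y,N)}T\|_\Sigma$ at that point, and a Neumann series argument gives invertibility. Your route is shorter and avoids the block-matrix computation entirely; the paper's route is more computational but yields, as a byproduct, a quantitative lower bound on the Jacobian determinant that is uniform in $(x,\xi)$ and in $M$ (though that uniformity is not used elsewhere in the paper). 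One detail you flagged yourself and that is worth making fully explicit is precisely the interiority of the fixed point — without the strict inequalities in \eqref{zzeta}, the Lipschitz estimate on $\Sigma$ would not control directional derivatives in all directions at $(\bar Y,\bar N)$.
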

\begin{proof}
For $(Y,N)\in \R^{2Mn}$ and $(x,\xi)\in \R^{2n}$, we define the function $$F(Y,N;x,\xi):=(F_1,\ldots, F_M)(Y,N;x,\xi),$$ with values in $\R^{2M},$ where for all $k=1,\ldots,M$,
\[F_k(Y,N;x,\xi):=\left(y_k-J'_{k, \xi}(x+z^{k-1},\xi+\zeta^{k}),\ \eta_k-J'_{k+1, x}(x+z^k,\xi+\zeta^{k+1})\right). \]
We apply the implicit function Theorem to the function $F$, which is clearly of class $C^\infty$ with respect to all variables, being $J_k$ a $C^\infty$ function for all $k=1,\ldots,M$. For every fixed $(x,\xi)$ we have that
\[F((\bar Y,\bar N)(x,\xi);x,\xi)=0,\]
since $(\bar Y,\bar N)$ is the solution of \eqref{(C')}. Moreover, we are going to prove here below that 
\beqs\label{det}
\det\left( \frac{\partial F}{\partial(Y,N)}((\bar Y,\bar N)(x,\xi);x,\xi)\right)\neq 0.
\eeqs
This means that the implicitly defined function $(\bar Y,\bar N)(x,\xi)$ has the same regularity as $F$, so it is of class $C^\infty(\R^{2n})$. To complete the proof, it remains only to show that \eqref{det} holds true. 
\\
\noindent Let us compute the entries of the $2M\times 2M$ matrix $ \frac{\partial F}{\partial(Y,N)}(Y,N;x,\xi)$. For every fixed $k=1,\ldots,M$, $(x,\xi)\in\R^{2n}$, we have
\beqsn
F'_{k, y_j}\!(Y,N;x,\xi)\!\!=\!\!\begin{cases}
\!\left(\!-J''_{k, \xi x}(x+z^{k-1},\xi+\zeta^{k}),\ -J''_{k+1, xx}(x+z^k,\xi+\zeta^{k+1})\!\right)\!, &\hspace{-3mm} 1\leq j\leq k-1
\\
\!\left(\!1,\ -J''_{k+1, xx}(x+z^k,\xi+\zeta^{k+1})\!\right)\!, &\hspace{-3mm} j=k
\\
\!(0,\ 0), &\hspace{-3mm} k+1\leq j\leq M,
\end{cases}
\eeqsn
and
\beqsn
F'_{k, \eta_j}\!(Y,N;x,\xi)\!\!=\!\!\begin{cases}
\!(0,\ 0), & \hspace{-3mm} 1\leq j\leq k-1
\\
\!\left(\!-J''_{k, \xi\xi}(x+z^{k-1},\xi+\zeta^{k}),\ 1\!\right)\!, & \hspace{-3mm} j=k
\\
\!\left(\!-J''_{k, \xi\xi}(x+z^{k-1},\xi+\zeta^{k}),\ -J''_{k+1, x\xi}(x+z^k,\xi+\zeta^{k+1})\!\right)\!, &\hspace{-3mm}  k+1\leq j\leq M,
\end{cases}
\eeqsn
so we can write
\[ \frac{\partial F}{\partial(Y,N)}(Y, N;x,\xi)=\left(
\begin{array}{cc}
I-H_{11}(Y,N;x,\xi)& -H_{12}(Y,N;x,\xi)
\\
-H_{21}(Y,N;x,\xi) & I-H_{22}(Y,N;x,\xi)
\end{array}
\right),\]
where $I$ stands for the identity $M\times M$ matrix, and 
\[H_{1,1}=\left(
\begin{array}{cccc}
0&0&\cdots&0
\\
J''_{2, \xi x}&0&\ddots&0
\\
\cdots&\cdots&\ddots&\vdots
\\
J''_{M, \xi x}&\cdots&J''_{M, \xi x}& 0
\end{array}
\right),\quad
H_{1,2}=\left(
\begin{array}{cccc}
J''_{1, \xi\xi}&\cdots&\cdots&J''_{1, \xi\xi}
\\
0&J''_{2, \xi\xi}&\cdots&J''_{2, \xi\xi}
\\
\vdots&\ddots&\ddots&\vdots
\\
0&\cdots&0&J''_{M, \xi \xi}\end{array}
\right)\]
\[H_{2,1}=\left(
\begin{array}{cccc}
J''_{2, xx}&0&\cdots&0
\\
J''_{3, xx}&J''_{3, xx}&\cdots&0
\\
\vdots&\cdots&\ddots&\vdots
\\
J''_{M+1, xx}&\cdots&\cdots&J''_{M+1, xx}\end{array}
\right),\quad
H_{2,2}=\left(
\begin{array}{cccc}
0&J''_{2, x\xi}&\cdots&J''_{2, x\xi}
\\
0&0&\cdots&\vdots
\\
\vdots&\vdots&\ddots&J''_{M, x\xi}
\\
0&\cdots&\cdots&0\end{array}
\right).\]
Let us estimate the matrix norm of each one of the $H_{ij}$:
\beqsn
\| H_{11}(Y,N;x,\xi)\|&=&\max_{j=1,\ldots,M}\ds\sum_{i=1}^M |(h_{11})_{ij}|\leq \max_{j=1,\ldots,M}\sum_{i=j+1}^M\tau_i\leq\ds\sum_{j=1}^M\tau_j
\\
\| H_{12}(Y,N;x,\xi)\|&=&\max_{j=1,\ldots,M}\ds\sum_{i=1}^M |(h_{12})_{ij}|\leq \max_{j=1,\ldots,M}\sum_{i=1}^j\tau_i\<x+z^{i-1}\>\<\xi+\zeta^i\>^{-1}
\\
\| H_{21}(Y,N;x,\xi)\|&=&\max_{j=1,\ldots,M}\ds\sum_{i=1}^M |(h_{21})_{ij}|\leq \max_{j=1,\ldots,M}\sum_{i=j}^M\tau_{i+1}\<x+z^{i}\>^{-1}\<\xi+\zeta^{i+1}\>
\\
\| H_{22}(Y,N;x,\xi)\|&=&\max_{j=1,\ldots,M}\ds\sum_{i=1}^M |(h_{22})_{ij}|\leq \max_{j=1,\ldots,M}\sum_{i=1}^{j-1}\tau_{i+1}\leq\ds\sum_{j=1}^M\tau_j.
\eeqsn
With the choice $(Y,N)=(\bar Y,\bar N)(x,\xi)$ these estimates become, via formula \eqref{simplificationH} and Remark \ref{booh},
\beqsn
\| H_{11}((\bar Y,\bar N)(x,\xi);x,\xi)\|\leq \ds\sum_{j=1}^M\tau_j,\qquad
\| H_{12}((\bar Y,\bar N)(x,\xi);x,\xi)\|\leq 2\x\<\xi\>^{-1}\sum_{i=1}^M\tau_i,
\\
\| H_{21}((\bar Y,\bar N)(x,\xi);x,\xi)\|\leq2\x^{-1}\<\xi\>\ds\sum_{i=1}^M \tau_{i},
\qquad
\| H_{22}((\bar Y,\bar N)(x,\xi);x,\xi)\|\leq\ds\sum_{j=1}^M\tau_j.
\eeqsn
Now, since $\det(I-H_{11})=1$, being $H_{11}$ triangular with null diagonal, 
we have
\begin{align*} \det&\frac{\partial F}{\partial(Y,N)}((\bar Y,\bar N)(x,\xi);x,\xi)
\\
=&\det \left(
\begin{array}{cc}
I-H_{11}((\bar Y,\bar N)(x,\xi);x,\xi)& -\<\xi\>\x^{-1}H_{12}((\bar Y,\bar N)(x,\xi);x,\xi)
\\
-\x \<\xi\>^{-1}H_{21}((\bar Y,\bar N)(x,\xi);x,\xi) & I-H_{22}((\bar Y,\bar N)(x,\xi);x,\xi)
\end{array}
\right)
\\
=&\det\left(I-\left(
\begin{array}{cc}
H_{11}((\bar Y,\bar N)(x,\xi);x,\xi)& \<\xi\>\x^{-1}H_{12}((\bar Y,\bar N)(x,\xi);x,\xi)
\\
\x \<\xi\>^{-1}H_{21}((\bar Y,\bar N)(x,\xi);x,\xi) & H_{22}((\bar Y,\bar N)(x,\xi);x,\xi)
\end{array}
\right)\right)
\\
=&\det(I-A(x,\xi)),
\end{align*}
with 
\begin{align*}
\|A(x,\xi)\|&=\max\{\|H_{11}((\bar Y,\bar N)(x,\xi);x,\xi)\|+\|\x \<\xi\>^{-1}H_{21}((\bar Y,\bar N)(x,\xi);x,\xi)\|, 
\\
&\phantom{=\max\{\;}\| H_{22}((\bar Y,\bar N)(x,\xi);x,\xi)\|+\|\<\xi\>\x^{-1}H_{12}((\bar Y,\bar N)(x,\xi);x,\xi)\|\}
\\
&\leq 3\ds\sum_{j=1}^M\tau_j\leq 3\tau_0<\ds\frac34,
\end{align*}
and applying Proposition \ref{5.3} below, cfr. \cite{Kumano-go:1}, we get 
$\det(I-A(x,\xi))\geq 4^{-2M}>0$. That is, \eqref{det} holds true, and the proof is complete.
\end{proof}
\begin{proposition}[Proposition 5.3, page 336 in \cite{Kumano-go:1}]\label{5.3}
Let $A=(a_{ij})_{1\leq i,j\leq \ell}$ be a real matrix and suppose that there exists a constant $c_0\in [0,1)$ such that 
\[\|A\|:=\max_{j=1,\ldots,\ell}\ds\sum_{i=1}^\ell |a_{ij}|\leq c_0.\]
Then,
\[
	(1-c_0)^\ell\leq \det (I-A)\leq (1+c_0)^\ell.
\]
\end{proposition}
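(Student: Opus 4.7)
The plan is to reduce the computation of $\det(I-A)$ to a product over the eigenvalues of $A$, and then to control each factor separately using the bound $\|A\|\leq c_0$.

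First, I would observe that the quantity $\|A\|=\max_{j}\sum_{i}|a_{ij}|$ is precisely the operator norm of $A$ acting on $(\R^\ell,\|\cdot\|_1)$. Consequently, for any eigenvalue $\mu\in\C$ of $A$ with corresponding eigenvector $v$, the elementary chain
\[
	|\mu|\,\|v\|_1=\|Av\|_1\leq \|A\|\,\|v\|_1
\]
shows that $|\mu|\leq \|A\|\leq c_0$. Thus every eigenvalue of $A$ lies in the closed disk $\{|z|\leq c_0\}$, which, since $c_0<1$, is contained in the open unit disk; in particular, $1$ is not an eigenvalue of $A$ and $\det(I-A)\neq 0$.

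Next, let $\mu_1,\dots,\mu_\ell\in\C$ denote the eigenvalues of $A$ counted with multiplicity, so that $\det(I-A)=\prod_{j=1}^{\ell}(1-\mu_j)$. Because $A$ has real entries, its characteristic polynomial has real coefficients, hence the non-real eigenvalues appear in complex-conjugate pairs. For a real eigenvalue $\mu_j\in[-c_0,c_0]$, the factor $1-\mu_j$ belongs to the (strictly positive) interval $[1-c_0,1+c_0]$. For a complex-conjugate pair $\mu,\bar\mu$ with $|\mu|\leq c_0$, the corresponding contribution $(1-\mu)(1-\bar\mu)=|1-\mu|^2$ satisfies
\[
	(1-c_0)^2\leq (1-|\mu|)^2\leq |1-\mu|^2\leq (1+|\mu|)^2\leq (1+c_0)^2.
\]
Multiplying all the factors together, and using that the number of real eigenvalues plus twice the number of complex pairs equals $\ell$, the claimed two-sided estimate $(1-c_0)^\ell\leq \det(I-A)\leq (1+c_0)^\ell$ follows at once; in particular, $\det(I-A)>0$.

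No genuine obstacle arises here; the only point requiring attention is the spectral-radius inequality $\rho(A)\leq \|A\|$ for the specific column-sum norm used in the statement, which is justified by the one-line eigenvector computation recalled above. An alternative, purely real-variable route would be to apply a Schur triangularization to $A$ over $\C$ (or, equivalently, a continuity argument on $t\mapsto \det(I-tA)$ for $t\in[0,1]$, which never vanishes since $tA$ has spectral radius $\leq c_0<1$) and to compare $\det(I-A)$ with the product of the diagonal entries of $I-T$; however, the eigenvalue factorization above is more direct and produces the sharp quantitative bounds without further effort.
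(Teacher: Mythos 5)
The paper itself does not prove this proposition; it is quoted verbatim as a cited result (Proposition 5.3, p.~336 of Kumano-go's book), so there is no in-text argument to compare your attempt against. On its own merits, your proof is correct. The identification of $\|A\|$ with the $\ell^1$ operator norm, the conclusion $\rho(A)\le\|A\|\le c_0$ from the one-line eigenvector computation (valid for complex eigenvectors once $\|\cdot\|_1$ is extended to $\C^\ell$), the factorization $\det(I-A)=\prod_j(1-\mu_j)$, the splitting into real eigenvalues and complex-conjugate pairs, and the elementary inequalities $(1-|\mu|)^2\le|1-\mu|^2\le(1+|\mu|)^2$ all check out; since $1-c_0>0$, every factor is strictly positive, so the factor-wise bounds multiply correctly to the claimed $(1\mp c_0)^\ell$ estimates. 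For context, an alternative route (which may be closer to Kumano-go's) avoids the spectral decomposition entirely: Hadamard's inequality applied column-wise to $I-A$, with $\|(I-A)e_j\|_2\le\|(I-A)e_j\|_1\le 1+c_0$, gives the upper bound directly, while applying it to $(I-A)^{-1}=\sum_{k\ge0}A^k$ (whose columns have $\ell^1$-norm $\le(1-c_0)^{-1}$), together with the positivity of $\det(I-A)$ obtained by a continuity argument on $t\mapsto\det(I-tA)$, yields the lower bound. Both routes are legitimate; your eigenvalue argument is the more direct of the two.
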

The following theorem gives crucial estimates of the unique $C^\infty$ solution $(Y,N)$ of \eqref{(C)}. 
\begin{theorem}\label{stime}
Under the assumptions \eqref{1/4} and \eqref{hyp}, 
the unique $C^\infty$ solution $(Y,N)(x,\xi)$ of \eqref{(C)} satisfies:
\begin{align}
\label{stimaY} 
|\partial_\xi^\alpha\partial_x^\beta(Y_j-Y_{j-1})(x,\xi)|&\leq c_{\alpha,\beta}\tau_j \<\xi\>^{-|\alpha|}\x^{1-|\beta|},
\\
\label{stimaN}
|\partial_\xi^\alpha\partial_x^\beta(N_j-N_{j+1})(x,\xi)|&\leq c_{\alpha,\beta}\tau_{j+1} \<\xi\>^{1-|\alpha|}\x^{-|\beta|},
\end{align}
for all $\alpha, \beta \in\Z^n_+,\; j=1,\ldots,M$, $x,\xi\in\R^n$,
 with constants $c_{\alpha,\beta}$ not depending on $j$ and $M$. Moreover, 
\beqs\label{324}
\{(Y_j-Y_{j-1})(x,\xi)/\tau_j\}_{j\geq 1}\ is\ bounded\ in\ S^{0,1},
\\
\{(N_j-N_{j+1})(x,\xi)/\tau_{j+1}\}_{j\geq 1}\ is\ bounded\ in\ S^{1,0}.
\eeqs
\end{theorem}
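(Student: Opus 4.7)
The plan is to prove \eqref{stimaY}--\eqref{stimaN} by induction on the total order $N=|\alpha|+|\beta|$, by differentiating the fixed-point system \eqref{(C')}. The base case $N=0$ is already the content of \eqref{yeta}, since $y_j=Y_j-Y_{j-1}$ and $\eta_j=N_j-N_{j+1}$ by the change of variables \eqref{star}.

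For the inductive step $N\ge 1$, I would apply $\partial^\alpha_\xi\partial^\beta_x$ to both sides of \eqref{(C')} and expand by Faà di Bruno. The result splits into three kinds of contributions. The \emph{top-order} part consists of terms in which all $N$ derivatives fall on the inner arguments $z^{k-1},\zeta^k$, each producing a single undifferentiated second derivative of $J_k$ multiplied by $\partial^\alpha_\xi\partial^\beta_x z^{k-1}=\sum_{j\le k-1}\partial^\alpha_\xi\partial^\beta_x y_j$ or $\partial^\alpha_\xi\partial^\beta_x\zeta^k=\sum_{j\ge k}\partial^\alpha_\xi\partial^\beta_x\eta_j$. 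A \emph{source} contribution appears when a first derivative hits the explicit arguments $x$ or $\xi$. The remaining \emph{lower-order} terms are products of strictly lower-order derivatives of $y_\ell,\eta_\ell$ with higher derivatives of $J_k$. Moving the top-order part to the left-hand side and rescaling the $y$-unknowns by $\<\xi\>^{|\alpha|}\x^{|\beta|-1}$ and the $\eta$-unknowns by $\<\xi\>^{|\alpha|-1}\x^{|\beta|}$ recasts the system as
\[
(I-A_N)\,U_N=F_N,
\]
where $A_N$ has exactly the same block structure as the matrix $H$ in the invertibility argument in the proof of Theorem \ref{cinfty}. By \eqref{marr}, $\|A_N\|\le 3\tau_0<3/4$ uniformly in $M$, so Proposition \ref{5.3} provides a uniform bound on $(I-A_N)^{-1}$.

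Next, I would estimate $F_N$ row by row, showing that its $k$-th row is bounded, after rescaling, by $c_{\alpha,\beta}\tau_k$ (respectively $c_{\alpha,\beta}\tau_{k+1}$ for the $\eta$-block), with a constant independent of $j$ and $M$. The source term satisfies this bound because it involves only second derivatives of $J_k$, controlled by $c\tau_k$ via \eqref{marr}. The lower-order terms also do: each of their monomials contains exactly one factor that is a derivative of a single $J_k$ (contributing $\tau_k$), multiplied by lower-order derivatives of the $y_\ell,\eta_\ell$ to which the inductive hypothesis applies; the inner sums over $\ell$ are uniformly summable thanks to $\sum_\ell\tau_\ell\le\tau_0<1/4$ from \eqref{1/4}. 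Inverting $(I-A_N)$ then yields \eqref{stimaY}--\eqref{stimaN} with constants independent of $j$ and $M$. The boundedness of $\{(Y_j-Y_{j-1})/\tau_j\}$ in $S^{0,1}$ and $\{(N_j-N_{j+1})/\tau_{j+1}\}$ in $S^{1,0}$ stated in \eqref{324} is then an immediate reformulation in terms of the SG seminorms.

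The principal obstacle will be the Faà di Bruno bookkeeping required to check that \emph{every} monomial in $F_N$ retains the prescribed factor $\tau_k$ or $\tau_{k+1}$ corresponding to its row, and that the combinatorial sums arising from repeated differentiation of $z^{k-1}$ and $\zeta^k$ remain uniformly controlled as $M\to\infty$. Both facts ultimately depend on the structural observation already used in Theorems \ref{exists} and \ref{cinfty}: in each monomial exactly one factor is a derivative of one $J_k$, which provides the required $\tau_k$, while every internal sum over $\ell$ is dominated by $\sum_\ell\tau_\ell\le\tau_0$, hence convergent independently of $M$.
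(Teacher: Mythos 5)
Your plan is correct, and its core strategy is the same as the paper's: reduce to the $(\bar Y,\bar N)$-system \eqref{(C')}, differentiate it, proceed by induction on $N=|\alpha+\beta|$ with base case given by \eqref{yeta} and \eqref{star}, and exploit the smallness $\sum_j\tau_j=\tau_0<1/4$ together with \eqref{marr} so that each monomial keeps a factor $\|J_k\|_{2,\cdot}\lesssim\tau_k$ and all the interior sums over indices are dominated by $\tau_0$. The one genuine difference is how you close the system at each order: you propose to rescale and write the top-order part as $(I-A_N)U_N=F_N$, then invert $I-A_N$ via Proposition \ref{5.3}, borrowing the $2M\times 2M$ block machinery from the smooth-dependence argument in Theorem \ref{cinfty}. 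The paper does not invoke Proposition \ref{5.3} in this proof; instead it uses a lighter, purely scalar device: after differentiating \eqref{(C')} it sums the weighted estimates over $k=1,\dots,M$ (with the weight $h=\langle x\rangle\langle\xi\rangle^{-1}$ equalizing the two blocks), obtains a single scalar inequality of the form $\Sigma\le 3\bar\tau_{M+1}(1+\Sigma)$ plus a source term, solves it using $3\tau_0<3/4$, and then substitutes the bound on $\Sigma$ back into the $k$-th row to recover the $\tau_k$- (resp.\ $\tau_{k+1}$-) weighted estimate. Your matrix route is equally valid and makes the analogy with Theorem \ref{cinfty} transparent, at the cost of rebuilding the block matrix at every order of differentiation; the paper's scalar route avoids the explicit inverse and is slightly leaner, which is what makes the constants $c_{\alpha,\beta}$ visibly independent of $M$. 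Either way the bookkeeping you flag — that every Fa\`a di Bruno monomial retains exactly one differentiated $J_k$ factor and that the $z^{k-1},\zeta^k$ sums remain controlled by $\tau_0$ — is indeed the crux, and it goes through exactly as you describe.
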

\begin{proof}
Estimates \eqref{stimaY}, \eqref{stimaN} in the case $\alpha=\beta=0$ have already been proved, see \eqref{yeta} and \eqref{star}.
To prove the same estimates for $|\alpha+\beta|\geq 1$,  it is sufficient, by \eqref{yeta}, \eqref{star} and \eqref{marr}, to show that the solution $(\bar Y,\bar N)(x,\xi)$ of \eqref{(C')} is such that
\beqs\label{stimay} 
|\partial_\xi^\alpha\partial_x^\beta y_k(x,\xi)|\leq c_{\alpha,\beta}\|J_k\|_{2,|\alpha+\beta|-1} \<\xi\>^{-|\alpha|}\x^{1-|\beta|},
\\
\label{stimaeta}
|\partial_\xi^\alpha\partial_x^\beta\eta_k(x,\xi)|\leq c_{\alpha,\beta}\|J_{k+1}\|_{2,|\alpha+\beta|-1} \<\xi\>^{1-|\alpha|}\x^{-|\beta|},
\eeqs
for $|\alpha+\beta|\geq 1,\; k=1,\ldots,M$, $x,\xi\in\R^{n}$. 
Estimates \eqref{stimay}, \eqref{stimaeta} are going to be proved by induction on $N=|\alpha+\beta|$.
\\
\emph{Step $N=1$.} We need to check \eqref{stimay}, \eqref{stimaeta} for the derivatives of order 1. Let us start with the derivatives with respect to $x$. By definition \eqref{(C')} of $y_k,\eta_k$, $k=1,\dots,M$, $x,\xi\in\R^n$, we have
\beqs\label{derx}
\hspace*{1.2cm}
\begin{cases}
y'_{k, x}=J''_{k, \xi x}(.+z^{k-1},..+\zeta^{k})(1+(z^{k-1})'_x)+J''_{k, \xi\xi}(.+z^{k-1},..+\zeta^{k})(\zeta^{k})'_x
\\
\eta'_{k, x}=J''_{k+1, xx}(.+z^k,..+\zeta^{k+1})(1+(z^k)'_x)+J''_{k+1, x\xi}(.+z^k,..+\zeta^{k+1})(\zeta^{k+1})'_x.
\end{cases}
\eeqs
By \eqref{hyp}, setting $h(x,\xi)=\norm{x}\norm{\xi}^{-1}$, we obtain
\beqsn
\|y'_{k, x}\|+h\cdot\|\eta'_{k, x}\|&\leq&\tau_k\left\{1+\|(z^{k-1})'_x\|+\<.+z^{k-1}\>\<..+\zeta^{k}\>^{-1}\|(\zeta^{k})'_x\|\right\}
\\
&+&\tau_{k+1}\cdot h\cdot\left\{\<.+z^k\>^{-1}\<..+\zeta^{k+1}\>(1+\|(z^k)'_x\|)+\|(\zeta^{k+1})'_x\|\right\};
\eeqsn
from \eqref{zzeta} we have $\frac23\x\leq \<x+z^{k-1}\>\leq\frac43\x$ and $\frac23\<\xi\>\leq \<\xi+\zeta^{k}\>\leq \frac43\<\xi\>$, so we come to
\beqsn
\|y'_{k, x}\|+h\cdot\|\eta'_{k, x}\|&\leq&\tau_k\left\{1+\|(z^{k-1})'_x\|+2\cdot h \cdot\|(\zeta^{k})'_x\|\right\}
\\
&+&\tau_{k+1}\left\{2+2\|(z^k)'_x\|+h\cdot\|(\zeta^{k+1})'_x\|\right\}
\\
&\leq &\tau_k\left\{1+\ds\sum_{k=1}^M\|y'_{k, x}\|+2\cdot h\cdot\ds\sum_{k=1}^M\|\eta'_{k, x}\|\right\}
\\
&+&\tau_{k+1}\left\{2+2\ds\sum_{k=1}^M\|y'_{k, x}\|+h\cdot\ds\sum_{k=1}^M\|\eta'_{k, x}\|\right\},
\eeqsn
where we have used also definition \eqref{zj}. Summing for $k=1,\ldots,M$, we get, for any $x,\xi\in\R^n$,
\beqsn
\ds\sum_{k=1}^M\left(\|y'_{k, x}\|+h\cdot\|\eta'_{k, x}\|\right)&\leq&\bar \tau_M\left\{1+\ds\sum_{k=1}^M\|y'_{k, x}\|+
2\cdot h\cdot\ds\sum_{k=1}^M\|\eta'_{k, x}\|\right\}
\\
&+&\bar\tau_{M+1}\left\{2+2\ds\sum_{k=1}^M\|y'_{k, x}\|+h\cdot\ds\sum_{k=1}^M\|\eta'_{k, x}\|\right\}
\\
&\leq &3\bar\tau_{M+1}\left\{1+\ds\sum_{k=1}^M\left(\|y'_{k, x}\|+h\cdot\|\eta'_{k, x}\|\right)\right\}.
\eeqsn
This last inequality immediately gives
\beqs\label{amount}
\ds\sum_{k=1}^M\left(\|y'_{k, x}\|+h\cdot\|\eta'_{k, x}\|\right)\leq\frac{3\tau_{M+1}}{1-3\tau_{M+1}}\leq\frac{3\tau_0}{1-3\tau_0}
\eeqs
with $1-3\tau_0>1-3/4=1/4>0$, so that the amount \eqref{amount} is finite (bounded by 3).
Coming back to \eqref{derx} and substituting there the estimate here above we get
\beqsn
\|y'_{k, x}\|&\leq& \|J_k\|_{2,0}\left\{1+\|(z^{k-1})'_x\|+2\cdot h\cdot\|\eta'_{k, x}\|\right\}
\\
&\leq &2\|J_k\|_{2,0}\left\{1+\ds\sum_{k=1}^M\left(\|y'_{k, x}\|+h\cdot\|\eta'_{k, x}\|\right)\right\}
\\
&\leq &2\|J_k\|_{2,0}\left(1+\frac{3\tau_0}{1-3\tau_0}\right)=:c_{0,1}\|J_k\|_{2,0},
\eeqsn
that is \eqref{stimay} with $\alpha=0$ and $|\beta|=1$. With similar computations we obtain
\beqsn
\|\eta'_{k, x}(x,\xi)\|&\leq& \|J_{k+1}\|_{2,0}\left(\langle .+z^k\rangle^{-1}\langle .+\zeta^{k+1}\rangle(1+\|(z^k)'_x\|)+\|(\zeta^{k+1})'_x\|\right)(x,\xi)
\\
&\leq & \|J_{k+1}\|_{2,0}\left(2\cdot h^{-1}(1+\|(z^k)'_x\|)+\|(\zeta^{k+1})'_x\|\right)(x,\xi)
\\
&\leq &2\|J_{k+1}\|_{2,0}\left[h^{-1}\left(1+\|(z^k)'_x\|+h\cdot\|(\zeta^{k+1})'_x\|\right)\right](x,\xi)
\\
&\leq & 2\|J_{k+1}\|_{2,0}\left[h^{-1}\left(1+\ds\sum_{k=1}^M\left(\|y'_{k, x}\|+h\cdot\|\eta'_{k, x}\|\right)\right)\right](x,\xi)
\\
&\leq & 2\x^{-1}\<\xi\>\|J_{k+1}\|_{2,0}\left(1+\frac{3\tau_0}{1-3\tau_0}\right)
\\
&=& C_{0,1}\|J_{k+1}\|_{2,0}\x^{-1}\<\xi\>,\quad x,\xi\in\R^n,
\eeqsn
and also
\[\|y'_{k, \xi}(x,\xi)\|\leq C_{1,0}\|J_{k}\|_{2,0}\x\<\xi\>^{-1},\qquad \|\eta'_{k, \xi}(x,\xi)\|\leq C_{1,0}\|J_{k+1}\|_{2,0},\quad x,\xi\in\R^n.
\]
The step $N=1$ is complete.
\\
\emph{Step $N\leadsto N+1$.} Let us now suppose that \eqref{stimay}, \eqref{stimaeta} hold for $1\leq |\alpha+\beta|\leq N$, $N\geq 1$, $ x,\xi\in\R^n$, and prove the same estimates for $|\alpha+\beta|=N+1.$
If we substitute \eqref{marr} into \eqref{stimay}, \eqref{stimaeta} we immediately get 
 \beqs\label{stimayind} 
|\partial_\xi^\alpha\partial_x^\beta y_k(x,\xi)|\leq c'_{\alpha,\beta} \<\xi\>^{-|\alpha|}\x^{1-|\beta|},
\\
\label{stimaetaind}
|\partial_\xi^\alpha\partial_x^\beta\eta_k(x,	\xi)|\leq c'_{\alpha,\beta}\<\xi\>^{1-|\alpha|}\x^{-|\beta|},
\eeqs
for $1\leq |\alpha+\beta|\leq N$ and $k=1,\ldots,M.$ These estimates are going to be used to bound the derivatives $\partial_x^\beta\partial_\xi^\alpha$ with $|\alpha+\beta|=N$ of the functions $y'_{k, x},\ y'_{k, \xi},\  \eta'_{k, x},\ \eta'_{k, \xi}$ (i.e. the derivatives $\partial_x^\beta\partial_\xi^\alpha$ with $|\alpha+\beta|=N+1$ of the functions $y_k, \eta_k$). Let us start by computing, from \eqref{derx}, the derivative
\beqs\label{luu}
\partial_x^\beta\partial_\xi^\alpha y'_{k, x}&=&
\partial_x^\beta\partial_\xi^\alpha\left[
J''_{k, \xi x}(.+z^{k-1},..+\zeta^{k})\cdot\left(1+(z^{k-1})'_x\right)
\right]
\\
\nonumber&+&\partial_x^\beta\partial_\xi^\alpha\left[J''_{k, \xi\xi}(.+z^{k-1},..+\zeta^{k})\cdot(\zeta^{k})'_x\right].
\eeqs
To obtain an estimate of \eqref{luu}, we use Fa\'a di Bruno formula, write the derivatives of $z^k$ and $\zeta^k$ as derivatives with respect to $y_k$ and $\eta_k$ by \eqref{zj}, and finally we apply \eqref{stimayind}, \eqref{stimaetaind}, obtaining
\beqsn
&&|\partial_x^\beta\partial_\xi^\alpha\left(J''_{k, \xi x}(x+z^{k-1}(x,\xi),\xi+\zeta^{k}(x,\xi))\right)|
\\
&&\hskip+0.3cm \leq \ds\sum_{\afrac {\beta_1+\cdots+\beta_r=\beta}{\beta_i\neq 0}} \ds\sum_{\afrac{\alpha_1+\cdots+\alpha_q=\alpha}{\alpha_i\neq 0}}C_{q,r,\alpha,\beta}\|J_k\|_{2,q+r}\<\xi\>^{-q}\x^{-r}\cdot
\\
&&\hskip+0.3cm\phantom{ \leq \ds\sum_{\afrac {\beta_1+\cdots+\beta_r=\beta}{\beta_i\neq 0}} \ds\sum_{\afrac{\alpha_1+\cdots+\alpha_q=\alpha}{\alpha_i\neq 0}}C_{q,r,\alpha,\beta}\|J_k\|_{2,q+r}}
\cdot\<\xi\>^{(1-|\alpha_1|)+\cdots+(1-|\alpha_q|)}\x^{(1-|\beta_1|)+\cdots+(1-|\beta_r|)}
\\
&&\hskip+0.6cm\leq  C_{\alpha,\beta} \|J_k\|_{2,|\alpha+\beta|}\<\xi\>^{-|\alpha|}\x^{-|\beta|}
\eeqsn
and 
\beqsn
|\partial_x^\beta\partial_\xi^\alpha\left(J''_{k, \xi\xi}(x+z^{k-1}(x,\xi),\xi+\zeta^{k}(x,\xi))\right)|\leq C_{\alpha,\beta} \|J_k\|_{2,|\alpha+\beta|}\<\xi\>^{-1-|\alpha|}\x^{1-|\beta|}.
\eeqsn
Thus, coming back to \eqref{luu}, substituting these last two estimates and using \eqref{zj} we come to
\beqs\label{541i}
\nonumber|\partial_x^\beta\partial_\xi^\alpha y'_{k, x}(x,\xi)|&\leq& \|J_k\|_{2,0}\ds\sum_{j=1}^M\left(|\partial_x^\beta\partial_\xi^\alpha y'_{j, x}(x,\xi)|+2\x\<\xi\>^{-1}|\partial_x^\beta\partial_\xi^\alpha \eta'_{j, x}(x,\xi)|\right)
\\
\nonumber&+& C'_{\alpha,\beta} \|J_k\|_{2,|\alpha+\beta|}\left(\<\xi\>^{-|\alpha|}\x^{-|\beta|}+ C''_{\alpha,\beta} \<\xi\>^{-1-|\alpha|}\x^{1-|\beta|}\<\xi\>\x^{-1}\right)
\\
\nonumber&\leq &  \|J_k\|_{2,0}\ds\sum_{j=1}^M\left(|\partial_x^\beta\partial_\xi^\alpha y'_{j, x}(x,\xi)|+2\x\<\xi\>^{-1}|\partial_x^\beta\partial_\xi^\alpha \eta'_{j, x}(x,\xi)|\right)
\\
&+&\widetilde C_{\alpha,\beta} \|J_k\|_{2,|\alpha+\beta|}\<\xi\>^{-|\alpha|}\x^{-|\beta|}.
\eeqs
Working similarly on the terms $\partial_x^\beta\partial_\xi^\alpha\eta'_{k, x}$ coming from the derivatives in \eqref{derx}, we get the corresponding estimate:
\beqs\label{541ii}
\nonumber|\partial_x^\beta\partial_\xi^\alpha \eta'_{k, x}(x,\xi)|&\leq&  \|J_{k+1}\|_{2,0}\x^{-1}\<\xi\>\ds\sum_{j=1}^M\left(|\partial_x^\beta\partial_\xi^\alpha y'_{j, x}(x,\xi)|+2\x\<\xi\>^{-1}|\partial_x^\beta\partial_\xi^\alpha \eta'_{j, x}(x,\xi)|\right)
\\
&+&\widetilde C'_{\alpha,\beta} \|J_{k+1}\|_{2,|\alpha+\beta|}\<\xi\>^{1-|\alpha|}\x^{-1-|\beta|}.
\eeqs
Now summing up for $k=1,\ldots,M$ inequalities \eqref{541i} and \eqref{541ii} we have
\beqsn
&&\ds\sum_{k=1}^M\left(|\partial_x^\beta\partial_\xi^\alpha y'_{k, x}(x,\xi)|+2\x\<\xi\>^{-1}|\partial_x^\beta\partial_\xi^\alpha \eta'_{k, x}(x,\xi)|\right)\leq 
\\
&&\hskip+0.3cm \leq\left(\ds\sum_{k=1}^M\|J_k\|_{2,0}+2\ds\sum_{k=1}^M\|J_{k+1}\|_{2,0}\right)\cdot \ds\sum_{k=1}^M\left(|\partial_x^\beta\partial_\xi^\alpha y'_{k, x}(x,\xi)|+2\x\<\xi\>^{-1}|\partial_x^\beta\partial_\xi^\alpha \eta'_{k, x}(x,\xi)|\right)
\\
&&\hskip+0.6cm+\bar C_{\alpha,\beta} \left(\ds\sum_{k=1}^M\|J_k\|_{2,|\alpha+\beta|}+2\ds\sum_{k=1}^M\|J_{k+1}\|_{2,|\alpha+\beta|}\right)\<\xi\>^{-|\alpha|}\x^{-|\beta|}
\\
&&\leq 3c_0\tau_0\ds\sum_{k=1}^M\left(|\partial_x^\beta\partial_\xi^\alpha y'_{k, x}(x,\xi)|+2\x\<\xi\>^{-1}|\partial_x^\beta\partial_\xi^\alpha \eta'_{k, x}(x,\xi)|\right)+3c_{|\alpha+\beta|}\tau_0 \bar C_{\alpha,\beta}\<\xi\>^{-|\alpha|}\x^{-|\beta|},
\eeqsn
where $c_0,c_{|\alpha+\beta|}$ are the constants defined in \eqref{marr}. In particular,
notice that, by \eqref{hyp}, we have $c_0=1$. From this, we finally obtain
\beqs\label{olla}
\nonumber\ds\sum_{k=1}^M\left(|\partial_x^\beta\partial_\xi^\alpha y'_{k, x}(x,\xi)|+2\x\<\xi\>^{-1}|\partial_x^\beta\partial_\xi^\alpha \eta'_{k, x}(x,\xi)|\right)&\leq& \bar C'_{\alpha,\beta}\frac{\tau_0}{1-3\tau_0} \<\xi\>^{-|\alpha|}\x^{-|\beta|}
\\
&<& \bar C'_{\alpha,\beta}\<\xi\>^{-|\alpha|}\x^{-|\beta|}
\eeqs
by the choice of $\tau_0$ in \eqref{1/4}. Substituting \eqref{olla} in \eqref{541i} and \eqref{541ii} we get
\beqs\label{fatto1}
|\partial_x^\beta\partial_\xi^\alpha y'_{k, x}(x,\xi)|&\leq& C_{\alpha,\beta}\|J_k\|_{2,|\alpha+\beta|}\<\xi\>^{-|\alpha|}\x^{-|\beta|}
\\
\label{fatto2}|\partial_x^\beta\partial_\xi^\alpha \eta'_{k, x}(x,\xi)|&\leq& C_{\alpha,\beta}\|J_{k+1}\|_{2,|\alpha+\beta|}\<\xi\>^{1-|\alpha|}\x^{-1-|\beta|}.
\eeqs
All the computations from \eqref{luu} to \eqref{fatto2} on the functions $y'_{k, x}$ and $ \eta'_{k, x}$ can be repeated on the functions $y'_{k, \xi}$ and $\eta'_{k, \xi}$ with minor changes. In this way we finally obtain the estimates
corresponding to \eqref{fatto1} and \eqref{fatto2}, namely 
\beqs\label{fatto3}
|\partial_x^\beta\partial_\xi^\alpha y'_{k, \xi}(x,\xi)|&\leq& C_{\alpha,\beta}\|J_k\|_{2,|\alpha+\beta|}\<\xi\>^{-1-|\alpha|}\x^{1-|\beta|}
\\
\label{fatto4}|\partial_x^\beta\partial_\xi^\alpha \eta'_{k, \xi}(x,\xi)|&\leq& C_{\alpha,\beta}\|J_{k+1}\|_{2,|\alpha+\beta|}\<\xi\>^{-|\alpha|}\x^{-|\beta|}.
\eeqs
The proof is complete, since \eqref{fatto1}-\eqref{fatto4} are the desired estimates \eqref{stimay} and \eqref{stimaeta} for all the derivatives of order $N+1$ of the functions $y_k$ and $\eta_k.$
\end{proof}    

We conclude with a Theorem that summarizes what we have proved throughout the present section, 
and gives the main properties of the multi-products of regular SG phase functions.

\begin{theorem}\label{properties}
Under assumptions \eqref{1/4} and \eqref{hyp}, the multi-product $\phi(x,\xi)$ of Definition \ref{mprod} is well defined for every $M\geq 1$ and has the following properties.
\begin{enumerate}
\item There exists $k\geq 1$ such that $\phi(x,\xi)=(\varphi_1\ \sharp\ \cdots\ \sharp\ \varphi_{M+1})(x,\xi)\in\mathcal P_r(k\bar\tau_{M+1})$ and, setting $$J_{M+1}(x,\xi):=(\varphi_1\ \sharp\ \cdots\ \sharp\ \varphi_{M+1})(x,\xi)-x\cdot\xi,$$ the sequence $\{J_{M+1}/\bar\tau_{M+1}\}_{M\geq1}$ is bounded in $S^{1,1}(\R^{2n})$.
\item The following relations hold:
\[\begin{cases}
\phi'_x(x,\xi)=\varphi'_{1, x}(x,N_1(x,\xi))
\\
\phi'_\xi(x,\xi)=\varphi'_{M+1, \xi}(Y_M(x,\xi),\xi),
\end{cases}
\]
where $(Y,N)$ is the critical point \eqref{(C)}.
\item The associative law holds:  $\varphi_1\ \sharp\ (\varphi_2\ \sharp\ \cdots\ \sharp\ \varphi_{M+1})=(\varphi_1\ \sharp\ \cdots\ \sharp\ \varphi_{M})\ \sharp\ \varphi_{M+1}$.
\item For any $\ell\geq 0$ there exist $0<\tau^\ast<1/4$ and $c^\ast\geq 1$ such that, if $\varphi_j\in \mathcal P_r(\tau_j,\ell)$ for all $j$ and $\tau_0\leq \tau^\ast$, then $\phi\in \mathcal P_r(c^\ast\bar\tau_{M+1},\ell)$.
\end{enumerate}
\end{theorem}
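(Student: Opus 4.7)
The plan is to deduce (1)--(4) directly from the constructions of Section \ref{sec:mpsgphf}: Theorems \ref{exists}, \ref{cinfty} and \ref{stime} already supply the smooth critical point $(Y,N)\in C^\infty(\R^{2n})$ together with the $\tau_j$-weighted derivative bounds on the telescoping differences $Y_j-Y_{j-1}$ and $N_j-N_{j+1}$, while Definition \ref{mprod} gives an explicit formula for $\phi$. Item (1) is the main content; (2), (3), (4) follow from it by essentially formal manipulations.

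For (1), substituting $\varphi_j(x,\xi)=x\cdot\xi+J_j(x,\xi)$ into \eqref{3.4} and telescoping (with the conventions $Y_0=x$ and $N_{M+1}=\xi$) I would derive the algebraic identity
\[
J_{M+1}(x,\xi)=\sum_{j=1}^{M+1}J_j(Y_{j-1},N_j)+\sum_{j=1}^{M}(Y_{j-1}-Y_j)\cdot(N_j-\xi).
\]
Each term on the right is an $S^{1,1}$ symbol with seminorm proportional to $\bar\tau_{M+1}$: the $j$-th summand of the first sum is the composition of $J_j$ (bounded by $\tau_j$ in $S^{1,1}$ thanks to \eqref{hyp}) with arguments $(Y_{j-1},N_j)$ that are $O(\bar\tau_{M+1})$-perturbations of $(x,\xi)$ by \eqref{zzeta} and \eqref{simplificationH}, and a Fa\`a di Bruno expansion based on \eqref{stimaY}--\eqref{stimaN} keeps the seminorms uniformly controlled in $j$ and $M$; for the second sum, the two factors are bounded in $S^{1,0}$ and $S^{0,1}$ by $\tau_j$ and $\sum_{k\ge j}\tau_{k+1}$ respectively, by \eqref{324}, so their product is of size $\tau_j\cdot\bar\tau_{M+1}$ in $S^{1,1}$. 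Summing in $j$ then yields the desired bound $\|J_{M+1}\|_{S^{1,1}}\lesssim\bar\tau_{M+1}$, uniformly in $M$. The non-degeneracy $|\det\phi''_{x\xi}|\ge r'$ follows from (2) by writing $\phi''_{x\xi}=\varphi''_{1,x\xi}(x,N_1)\,\partial_\xi N_1$; this is a $\bar\tau_{M+1}$-perturbation of the identity, hence invertible with a uniform lower bound via Proposition \ref{5.3}.

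Item (2) is the chain rule combined with criticality of $(Y,N)$: differentiating $\phi(x,\xi)=\psi(x,Y,N,\xi)$ in $x$ yields $\phi'_x=\psi'_{x_0}+\psi'_T\cdot Y'_x+\psi'_\Theta\cdot N'_x$, but the last two contributions vanish by \eqref{(C)}, leaving $\phi'_x=\varphi'_{1,x}(x,N_1)$; the $\xi$-derivative is symmetric. Item (3) follows from uniqueness of critical points: treating $\widetilde\varphi=\varphi_2\sharp\cdots\sharp\varphi_{M+1}$ as a single regular SG phase (well defined inductively by (1) and smallness of $\bar\tau_M$), its gradients are $\varphi'_{2,x}(\cdot,N_2^*)$ and $\varphi'_{M+1,\xi}(Y_M^*,\cdot)$ by (2); plugging these into the two-factor critical system for $\varphi_1\sharp\widetilde\varphi$ reproduces exactly \eqref{(C)}, so by Theorem \ref{exists} the two solutions coincide and yield the same $\phi$. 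Item (4) is obtained by re-running the induction of Theorem \ref{stime} under the sharper hypothesis $\|J_j\|_\ell\le\tau_j$: higher-order seminorms of $Y_j-Y_{j-1}$ and $N_j-N_{j+1}$ up to order $2+\ell$ then scale with $\tau_j$, and feeding these into the estimate of (1) gives $\|J_{M+1}\|_\ell\le c^\ast\bar\tau_{M+1}$, provided $\tau^\ast$ is chosen small enough that the contraction bound $3c_h\tau^\ast<1$ holds for all $h\le\ell$.

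The hardest point to control is the uniformity in $M$ in item (1): one must ensure that the Fa\`a di Bruno and Leibniz expansions feeding into the estimates produce constants depending only on $\alpha,\beta$ and not on the number of factors, so that summation over $j=1,\ldots,M+1$ yields an $M$-independent constant times $\bar\tau_{M+1}$. This is precisely what the telescoping form \eqref{simplificationH} and the $M$-independent constants $c_{\alpha,\beta}$ of Theorem \ref{stime} were designed to provide.
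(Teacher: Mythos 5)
Your proposal is correct and follows essentially the paper's own route: the same telescoping identity for $J_{M+1}$ (your form $\sum_j J_j(Y_{j-1},N_j)+\sum_j(Y_{j-1}-Y_j)\cdot(N_j-\xi)$ is algebraically equivalent, using $Y_0=Y_{M+1}=x$, $N_{M+1}=\xi$, to the paper's $\sum_j(J_j(Y_{j-1},N_j)-(Y_j-Y_{j-1})\cdot N_j)$), the same appeal to Theorem \ref{stime} and \eqref{324} for (1), the same chain-rule-plus-criticality cancellation for (2), and the same uniqueness-of-critical-point argument for (3), merely establishing $\varphi_1\sharp(\varphi_2\sharp\cdots\sharp\varphi_{M+1})=\varphi_1\sharp\cdots\sharp\varphi_{M+1}$ whereas the paper writes out the other bracketing in detail and remarks the second is analogous. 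The only minor divergences are cosmetic: for the non-degeneracy of $\phi''_{x\xi}$ you pass through part (2) and Proposition \ref{5.3}, while the paper simply observes that $\|J_{M+1}\|_0\le k\bar\tau_{M+1}<1$ forces condition (1) of Definition \ref{def:phaser} automatically (cf.\ Remark 2.7), and for (4) you re-run the induction of Theorem \ref{stime} while the paper reads the bound directly off the $S^{1,1}$-boundedness established in (1); both are sound.
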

\begin{proof}
By theorems \ref{exists} and \ref{cinfty} we know that, for any $M\geq 1$, $\phi$ is a well-defined smooth function on 
$\R^{2n}$. We start by showing (1). We write, with $Y_{0}(x,\xi)=Y_{M+1}(x,\xi):=x$, $N_{M+1}(x,\xi):=\xi$, 
\beqsn
J_{M+1}(x,\xi)&=&\ds\sum_{j=1}^M\left(\vp_j(Y_{j-1}(x,\xi),N_j(x,\xi))-Y_j(x,\xi)\cdot N_j(x,\xi)\right)
\\
&&\hspace*{4mm}+
\vp_{M+1}(Y_M(x,\xi),\xi)-x\cdot\xi
\\
&=&\ds\sum_{j=1}^{M+1}\left(\vp_j(Y_{j-1},N_j)-Y_j\cdot N_j\right)\!(x,\xi)
\\
&=&\ds\sum_{j=1}^{M+1}\left(J_j(Y_{j-1},N_j)-(Y_j-Y_{j-1})\cdot N_j\right)\!(x,\xi).
\eeqsn
This gives that
\[\frac{J_{M+1}}{\bar\tau_{M+1}}=\ds\sum_{j=1}^{M+1}\frac{\tau_j}{\bar\tau_{M+1}}\left(\frac{J_j(Y_{j-1},N_j)}{\tau_j}-\frac{Y_j-Y_{j-1}}{\tau_j}\cdot N_j\right){\rm \ is\ bounded\ in\ } S^{1,1}
\]
since $\{J_{j}/\tau_j\}_{j\geq1}$ is bounded in $S^{1,1}$, \eqref{324} holds, and $\<N_j(x,\xi)\>\asymp\<\xi\>$.
Now, the boundedness proved here above implies the existence of a positive constant $k$ such that
\beqs\label{ue}
\| J_{M+1}\|_2\leq k\bar\tau_{M+1}<k\tau_0,
\eeqs
and taking $\tau_0$ small enough, so that $k\tau_0<1$, we obtain that $\phi\in\mathcal P_r(k\bar\tau_{M+1})$. Statement (1) is proved.
Statement (4) immediately follows. Indeed, if $\varphi_j\in \mathcal P_r(\tau_j,\ell)$, then we have
$\|J_{M+1}\|_\ell\leq (k+1)\bar\tau_{M+1}$, with $k$ coming from \eqref{ue}, and we obtain $\|J_{M+1}\|_\ell\leq c^\ast \bar\tau_{M+1}$ and $\phi\in \mathcal P_r(c^\ast\bar\tau_{M+1},\ell)$ if we choose $c^\ast$ such that $c^\ast\tau_0<1.$
\noindent
 Let us now come to (2), which is quite simple. Indeed, from \eqref{mprod} and \eqref{(C)}, we have
\[
\phi(x,\xi):=\ds\sum_{j=1}^M\left(\vp_j(Y_{j-1}(x,\xi),N_j(x,\xi))-Y_j(x,\xi)\cdot N_j(x,\xi)\right)+\vp_{M+1}(Y_M(x,\xi),\xi).
\]
A derivation of the expression above with respect to $x$ and the use of \eqref{(C)} give
\beqsn
\phi'_x(x,\xi)&\!=\!& \ds\sum_{j=1}^M\left(
\vp'_{j, x}(Y_{j-1}(x,\xi),N_{j}(x,\xi))\cdot Y'_{j-1, x}(x,\xi)\right.
\\
&\!\!& \hspace*{4.5mm}+\vp'_{j, \xi}(Y_{j-1}(x,\xi),N_{j}(x,\xi))\cdot N'_{j, x}(x,\xi)
\\
&\!\!& \hspace*{4.5mm}\left.-Y'_{j, x}(x,\xi)\cdot N_j(x,\xi)-Y_j(x,\xi)\cdot N'_{j, x}(x,\xi)
\right)
\\
&+&\vp'_{M+1, x}(Y_M(x,\xi),\xi)\!\cdot\! Y'_{M, x}(x,\xi)
\\
&\!=\!&\vp'_{1,x}(x,N_1(x,\xi))-Y'_{1, x}(x,\xi)\cdot N_1(x,\xi)
\\
&+&\ds\sum_{j=2}^M \left(( Y'_{j-1, x}\cdot  N_{j-1} -Y'_{j, x} \cdot N_j)+N_M\cdot Y'_{M, x}\right)\!(x,\xi)
\\
&=&\vp'_{1,x}(x,N_1(x,\xi)), 
\eeqsn
which is exactly the first equality in (2). The second equality can be obtained similarly, by derivation with respect to $\xi$ of 
$\phi(x,\xi)$.
\\
Finally, we deal with (3). We want to show that 
\beqs\label{wwt}
(\vp_1\ \sharp\vp_2\ \sharp\cdots\ \sharp\ \vp_M)\ \sharp\ \vp_{M+1}=\vp_1\ \sharp\cdots\ \sharp\ \vp_{M+1}.
\eeqs
To this aim, let us denote
$$\widetilde\phi:=\varphi_1\ \sharp\ \cdots\ \sharp\ \varphi_{M},$$
and compute by \eqref{serveilnome}, with $M=1$, the product
\beqs\label{issa}
(\widetilde\phi\ \sharp\ \varphi_{M+1})(x,\xi)=\widetilde\phi(x,\widetilde N(x,\xi))-
\widetilde Y(x,\xi)\cdot\widetilde N(x,\xi)+\vp_{M+1}(\widetilde Y(x,\xi),\xi),
\eeqs
where $(\widetilde Y, \widetilde N)=(\widetilde Y, \widetilde N)(x,\xi)$ is the $2n-$dimensional critical point given by
\beqs\label{341}
\begin{cases}
\widetilde Y=\widetilde\phi'_\xi(x,\widetilde N),
\\
\widetilde N=\vp'_{M+1, x}(\widetilde Y,\xi).
\end{cases}
\eeqs
Notice that $\widetilde\phi\ \sharp\ \varphi_{M+1}$ is well-defined by (1) (eventually, with a smaller $\tau_0$). 
Now, we compute the value of $\widetilde\phi(x,\widetilde N(x,\xi))=(\varphi_1\ \sharp\ \cdots\ \sharp\ \varphi_{M})(x,\widetilde N(x,\xi))$ in \eqref{issa}, using \eqref{serveilnome} with $M-1$ in place of $M$ and $\widetilde N$ in place of $\xi$, obtaining
\beqs\label{atcop}
\begin{aligned}
\widetilde\phi&(x,\widetilde N(x,\xi))=
\\&\hskip+0.8cm=\ds\sum_{j=1}^{M-1}\left(\vp_j(\bar Y_{j-1}(x,\widetilde N(x,\xi)),\bar N_j(x,\widetilde N(x,\xi)))-
\bar Y_j(x,\widetilde N(x,\xi))\cdot\bar N_j(x,\widetilde N(x,\xi))\right)
\\
&\hskip+0.8cm+\vp_{M}(\bar Y_{M-1}(x,\widetilde N(x,\xi)),\widetilde N(x,\xi)),
\end{aligned}
\eeqs
with the $2(M-1)n-$dimensional critical point $(\bar Y, \bar N)$ given by
\beqs\label{dai!}
\begin{cases}
\bar Y_0=x
\\
\bar Y_j=\vp'_{j, \xi}(\bar Y_{j-1},\bar N_{j}) & j=1,\ldots,M-1
\\
\bar N_j=\vp'_{j+1, x}(\bar Y_j,\bar N_{j+1}) & j=1,\ldots,M-1
\\
\bar N_{M}=N,
\end{cases}
\eeqs
obtained from \eqref{(C)}, with $M-1$ in place of $M$ and $\widetilde N$ in place of $\xi.$ Moreover, we have from 
\eqref{341} and (2), with $M-1$ in place of $M$, that
\beqs\label{tu}
\begin{aligned}
\widetilde Y(x,\xi)&=\widetilde\phi'_\xi(x,\widetilde N(x,\xi))=
(\varphi_1\ \sharp\ \cdots\ \sharp\ \varphi_{M})'_\xi(x,\widetilde N(x,\xi))
\\
&=
\vp'_{M, \xi}(\bar Y_{M-1}(x,\widetilde N(x,\xi)), \widetilde N(x,\xi)).
\end{aligned}
\eeqs
Summing up, from \eqref{tu}, the second equation in \eqref{341}, and \eqref{dai!}, we have that
$(\bar Y_1,\cdots,\bar Y_{M-1}, \widetilde Y, \bar N_1, \cdots, \bar N_{M-1}, \widetilde N)$ solves system \eqref{(C)}, and thus it is the $2Mn-$di\-men\-sio\-nal critical point needed to define the multi-product $\varphi_1\ \sharp\ \cdots\ \sharp\ \varphi_{M+1}$, which turns out to be given, in view of \eqref{serveilnome}, by
\begin{align*}
(\varphi_1\ \sharp\ &\cdots\ \sharp\ \varphi_{M+1})(x,\xi)=
\\
&=\ds\sum_{j=1}^{M-1}\left(\vp_j(\bar Y_{j-1}(x,\widetilde N(x,\xi)),\bar N_j(x,\widetilde N(x,\xi)))
-\bar Y_j(x,\widetilde N(x,\xi))\cdot\bar N_j(x,\widetilde N(x,\xi))\right)
\\
&+\vp_{M}(\bar Y_{M-1}(x,\widetilde N(x,\xi)),\widetilde N(x,\xi))-\bar Y(x,\xi)\cdot\bar N(x,\xi)+
\vp_{M+1}(\widetilde Y(x,\xi),\xi).
\end{align*}
We observe that this last expression coincides with \eqref{issa} after substituting \eqref{atcop} in it. This gives that $\varphi_1\ \sharp\ \cdots\ \sharp\ \varphi_{M+1}=\widetilde\phi\ \sharp\ \vp_{M+1}$, that is \eqref{wwt}. Similarly, we can prove the corresponding law $\varphi_1\ \sharp\ \left(\varphi_2\ \sharp\cdots\ \sharp\ \varphi_{M+1}\right)=\varphi_1\ \sharp\cdots\ \sharp\ \varphi_{M+1}$, completing the proof of (3).
\end{proof}

\section{Composition of SG Fourier integral operators}\label{sec:sgfioprod}
\setcounter{equation}{0}
%
%
We can now prove our main theorem on compositions of regular SG FIOs. 
We start with an invertibility result for $I_\varphi=\Op_\varphi(1)$ and $I^*_\varphi=\Op^*_\varphi(1)$
when $\varphi$ is a regular phase function. Theorem \ref{thm:parama} below gives more precise versions
of \eqref{eq:parami}, \eqref{eq:paramii}, with a slight additional restriction on $\fy$, for FIOs with constant, nonvanishing
symbol. 
\begin{theorem} \label{thm:parama}
	Assume that $\varphi\in\Phr(\tau)$ with $0<\tau<\frac{1}{4}$ sufficiently small. Then, there exists $q\in S^{0,0}(\R^{2n})$ such that
	\begin{align}
		\label{eq:parama1}
		I_\varphi\circ\Op^*_\varphi(q)=\Op^*_\varphi(q)\circ I_\varphi=I,
		\\
		\label{eq:parama2}
		I^*_\varphi\circ\Op_\varphi(q)=\Op_\varphi(q)\circ I^*_\varphi=I.
	\end{align}
	Moreover, if the family of SG phase functions $\{\fy_s(x,\xi)\}$ is such that the  family $\{J_s(x,\xi)\}=\{\fy_s(x,\xi)-x\cdot\xi\}$ 
	is bounded in $S^{1,1}$, then the corresponding family $\{q_s\}$ is also bounded in $S^{0,0}$.
\end{theorem}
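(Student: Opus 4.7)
The plan is to reduce the invertibility of $I_\varphi$ to the invertibility of a pseudodifferential operator with symbol close to $1$, and then to transport the inverse back by composing with $I^*_\varphi$. By Theorem \ref{thm:compii} applied with $a=b=1$, there exist $p, \widetilde p \in S^{0,0}(\R^{2n})$ such that
\[
I_\varphi \circ I^*_\varphi = \Op(p), \qquad I^*_\varphi \circ I_\varphi = \Op(\widetilde p)
\]
(the smoothing remainders $r_5, r_6$ being absorbed into $p, \widetilde p$). The key observation is that the trivial phase $\varphi_0(x,\xi)=x\cdot\xi$, corresponding to $J\equiv 0$, gives $I_{\varphi_0} = I^*_{\varphi_0} = I$, whence $p = \widetilde p = 1$ in that case. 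Inspecting the asymptotic expansion of Theorem \ref{thm:compii} (obtained by stationary phase on the composition integral, with the critical point driven by $\nabla J$), one shows that every $S^{0,0}$-seminorm of $p-1$ and $\widetilde p - 1$ is controlled by a constant multiple of seminorms of $J$; hence, by \eqref{hyp} and Definition \ref{def:phaser}, these quantities can be made arbitrarily small by choosing $\tau$ sufficiently small.

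Given such smallness, $\Op(p) = I + \Op(p-1)$ and $\Op(\widetilde p)$ are invertible on every $H^{r,\rho}(\R^n)$ by Theorem \ref{thm:sobcont} and a Neumann series argument. The inverses are themselves pseudodifferential operators $\Op(b), \Op(\widetilde b)$ with $b, \widetilde b \in S^{0,0}$: convergence of the series $\sum_{k\geq 0}(-\Op(p-1))^k$ in each Fr\'echet seminorm of $\Op(S^{0,0})$ follows from sub-multiplicative estimates for the composition of symbols of order $(0,0)$, together with the smallness of $p-1$. One then sets
\[
\Op^*_\varphi(q) := I^*_\varphi \circ \Op(b), \qquad \Op^*_\varphi(q') := \Op(\widetilde b) \circ I^*_\varphi,
\]
which, by Theorem \ref{thm:compi}, are SG FIOs of type II with symbols $q, q' \in S^{0,0}$. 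A direct check yields
\[
I_\varphi \circ \Op^*_\varphi(q) = \Op(p) \circ \Op(b) = I, \qquad \Op^*_\varphi(q') \circ I_\varphi = \Op(\widetilde b) \circ \Op(\widetilde p) = I,
\]
and by uniqueness of left/right inverses on $L^2$ one has $q' = q$, proving \eqref{eq:parama1}. The identity \eqref{eq:parama2} then follows by taking formal $L^2$-adjoints and using $\Op^*_\varphi(a) = (\Op_\varphi(a))^*$ with $a = q$.

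The main obstacle is the quantitative analysis in the first paragraph: Theorem \ref{thm:compii} needs to be revisited so that the dependence of $p-1$ and $\widetilde p - 1$ on $\tau$ through the seminorms of $J$ is made explicit in \emph{every} $S^{0,0}$-seminorm, not merely modulo lower order. For the final statement, all the above constructions depend continuously on $\varphi$ through $J$: boundedness of $\{J_s\}$ in $S^{1,1}$ yields uniform smallness of $\{p_s-1\}, \{\widetilde p_s-1\}$ in each $S^{0,0}$-seminorm; the Neumann series then converges uniformly, giving boundedness of $\{b_s\}, \{\widetilde b_s\}$ in $S^{0,0}$; and continuity of composition in Theorem \ref{thm:compi} finally yields boundedness of $\{q_s\}$ in $S^{0,0}$, as required.
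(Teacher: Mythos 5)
Your overall strategy coincides with the paper's: write $I_\varphi\circ I^*_\varphi$ as $I$ plus a small remainder, invert by a Neumann series, and transport the inverse back by pre-composing with $I^*_\varphi$, then obtain the remaining identities by adjoints and the analogue of Kumano-go's Theorem~6.1. The last paragraph on uniformity (boundedness of $\{J_s\}$ in $S^{1,1}$ implying boundedness of $\{q_s\}$ in $S^{0,0}$) is also the right mechanism.

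There is, however, a genuine gap, and you have named it yourself. You appeal to Theorem~\ref{thm:compii} to write $I_\varphi\circ I^*_\varphi=\Op(p)$ and then assert that ``inspecting the asymptotic expansion \dots every $S^{0,0}$-seminorm of $p-1$ is controlled by a constant multiple of seminorms of $J$.'' As stated, Theorem~\ref{thm:compii} only produces some $c_5\in S^{0,0}$, with no quantitative dependence on $\tau$; the associated asymptotic expansion gives $p-1$ modulo lower order, not control of \emph{all} seminorms in terms of $\tau$. Without this smallness, the Neumann series does not even start, so the argument does not close. The paper avoids this issue entirely by not going through the general composition theorem: it performs an explicit change of variables $\eta=\Xi(x,y,\xi)$ with $\Xi(x,y,\xi)=\int_0^1\varphi'_x(x+t(y-x),\xi)\,dt$ (a globally invertible SG-diffeomorphism precisely because $\varphi\in\Phr(\tau)$ with $\tau<\tfrac14$), obtaining the exact formula
\[
I_\varphi\circ I^*_\varphi=I+A_0,\qquad
a_0(x,y,\eta)=\det\bigl(I+J''_{x\xi}(x,y,\xi)\bigr)^{-1}\big|_{\xi=\Xi^{-1}(x,y,\eta)}-1,
\]
from which the control of all seminorms of $a_0$ (and hence of the corresponding symbol $a\in S^{0,0}$) by $\tau$ is transparent, since $J''_{x\xi}$ has seminorms $O(\tau)$. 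If you want to keep your route, you must actually open up the proof of Theorem~\ref{thm:compii} and track the $\tau$-dependence through the stationary-phase argument in all orders, which is essentially reproducing the paper's computation in a less direct form. A second, smaller caveat: in the Neumann-series step, the claim that convergence ``follows from sub-multiplicative estimates for the composition of symbols of order $(0,0)$'' is too quick, since symbol composition is not sub-multiplicative in a single fixed seminorm; the correct statement (invertibility of $I+\Op(a)$ in $\Op(S^{0,0})$ for $a$ with small seminorms) is standard and should simply be cited, as the paper does.
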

\begin{proof}
	For $u\in\SX(\R^n)$ we have, by definition of type I and type II SG FIOs, 
	\begin{equation}\label{eq:compi-ii}
		((I_\varphi\circ I^*_\varphi) u)(x)=(2\pi)^{-n}\iint e^{i(\varphi(x,\xi)-\varphi(y,\xi))}\,u(y)\,dy d\xi.
	\end{equation}
	The map
	\[
		\Xi_{x,y}\colon\xi\mapsto\Xi_{x,y}(\xi)=\Xi(x,y,\xi)=\int_0^1 \fy^\prime_x(x+t(y-x),\xi)\,dt 		
	\]
	is globally invertible on $\R^n$. In fact, its Jacobian is given by the matrix
	\[
		\int_0^1 \fy^{\prime\prime}_{x\xi}(x+t(y-x),\xi)\,dt=I+\int_0^1 J^{\prime\prime}_{x\xi}(x+t(y-x),\xi)\,dt  
	\]
	which has nonvanishing determinant, in view of the hypothesis $\fy\in\Phr(\tau)$, $0\le\tau<\frac{1}{4}$. 
	Moreover, condition (2) in Definition \ref{def:phase} implies that $\Xi$ is coercive, and these two
	properties give its global invertibility on $\R^n$, see \cite[Theorems 11 and 12]{Coriasco:998.1} 
	and the references
	quoted therein. Finally, $\Xi_{x,y}$ is also a \textit{SG diffeomorphism with $0$-order parameter-dependence}, that is
	both $\Xi(x,y,\xi)$ and $\Xi^{-1}(x,y,\eta)$ belong to $S^{0,0,1}(\R^{3n})$,
	the space of \textit{SG amplitudes} of order $(0,0,1)$, see \cite{CO,Coriasco:998.1}, 
	and satisfy $\norm{\Xi(x,y,\xi)}\asymp\norm{\xi}$,
	$\norm{\Xi^{-1}(x,y,\eta)}\asymp\norm{\eta}$, uniformly with respect to $x,y\in\R^n$.
	In \eqref{eq:compi-ii} we can then change variable, setting
	\[
		\eta=\Xi(x,y,\xi)\Leftrightarrow \xi=\Xi^{-1}(x,y,\eta),
	\]
	and obtain
	\[
		((I_\varphi\circ I^*_\varphi) u)(x)=u(x)+(2\pi)^{-n}\iint e^{i(x-y)\cdot\eta}a_0(x,y,\eta)u(y)\,dyd\eta=((I+A_0) u)(x),
	\]
	with
	\[
		a_0(x,y,\eta)=\det(I+J^{\prime\prime}_{x\xi}(x,y,\xi))^{-1}|_{\xi=\Xi^{-1}(x,y,\eta)}-1.
	\]
	By the results on composition of \textit{SG functions} in \cite{Coriasco:998.1, CT14},
	we find that $a_0\in S^{0,0,0}(\R^{3d})$, the space of SG-amplitudes of order $(0,0,0)$.
	Since the seminorms of $a_0$ can be controlled by means of the parameter $\tau$,
	and the map associating $a_0$ with the symbol $a\in S^{0,0}$ such that $A_0=\Op(a)$
	is continuous, the same holds for the seminorms of $a$. By general arguments, see 
	\cite{CO,Kumano-go:1,SC98,SH87},
	it turns out that $(I+\Op(a))^{-1}$ exists in $\Op(S^{0,0})$. Then, setting $Q_\fy^*=I^*_\fy\circ (I+\Op(a))^{-1}$,
	using Theorem \ref{thm:compi} we find $Q_\fy^*=\Op^*_\fy(q)$ for some $q\in S^{0,0}$ and
	$I_\fy\circ\Op^*_\fy(q)=I$, which is the first part of \eqref{eq:parama1}. The remaining statements follow
	by arguments analogous to those used in the proof of \cite[Theorem 6.1]{Kumano-go:1}.
\end{proof}

The next Theorem \ref{thm:mainbis} is one of our main results.

\begin{theorem}\label{thm:mainbis}
	Let $\fy_j\in\Phr(\tau_j)$, $j=1,2$, be such that $0\le\tau_1+\tau_2\le\tau\le\frac{1}{4}$ for
	some sufficiently small $\tau>0$. Then, there
	exists $p\in S^{0,0}(\R^{2n})$ such that 
	\begin{align}
		\label{eq:maini}
		I_{\fy_1}\circ I_{\fy_2} &= \Op_{\fy_1\sharp\fy_2}(p),
		\\
		\label{eq:mainii}
		I^*_{\fy_2}\circ I^*_{\fy_1}&=\Op^*_{\fy_1\sharp\fy_2}(p).
	\end{align}
	Moreover, if the families of SG phase functions $\{\fy_{js}(x,\xi)\}$, $j=1,2$, are such that the families
	$\{J_{js}(x,\xi)\}=\{\fy_{js}(x,\xi)-x\cdot\xi\}$ are bounded in $S^{1,1}$, $j=1,2$, then, the 
	corresponding family $\{p_{s}(x,\xi)\}$ is also bounded in $S^{0,0}$.
\end{theorem}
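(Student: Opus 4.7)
The plan is to realize $I_{\varphi_1}\circ I_{\varphi_2}$ as a type I SG FIO with the multi-product phase $\phi:=\varphi_1\,\sharp\,\varphi_2$, which is well-defined and belongs to $\Phr$ by Theorem~\ref{properties} provided $\tau_1+\tau_2$ is small enough. Identity \eqref{eq:mainii} then follows from \eqref{eq:maini} via formal $L^2$-adjoint: $(I_{\varphi_1}\circ I_{\varphi_2})^*=I^*_{\varphi_2}\circ I^*_{\varphi_1}$ and $\Op_\phi(p)^*=\Op^*_\phi(p)$ by the very definition of type II operators given after \eqref{eq:typeii}. So I focus on \eqref{eq:maini}.

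The first step is to unfold the composition as the iterated oscillatory integral
\[
(I_{\varphi_1}\circ I_{\varphi_2})u(x)=(2\pi)^{-2n}\iiint e^{i\Phi(x,\xi,y,\xi_1)}\,\hat u(\xi)\,dy\,d\xi_1\,d\xi,\qquad u\in\SX(\R^n),
\]
with total phase $\Phi(x,\xi,y,\xi_1):=\varphi_1(x,\xi_1)-y\cdot\xi_1+\varphi_2(y,\xi)$. This is exactly the function $\psi$ from \eqref{serveilnome} for $M=1$. Theorems~\ref{exists}, \ref{cinfty} and \ref{properties}, applied under the smallness of $\tau_1+\tau_2$, then provide a unique smooth critical point $(Y_1(x,\xi),N_1(x,\xi))$ of $\Phi$ in $(y,\xi_1)$, satisfying \eqref{stimaY}--\eqref{stimaN} (so that $Y_1-x\in S^{1,0}$ and $N_1-\xi\in S^{0,1}$), and such that $\Phi(x,\xi,Y_1(x,\xi),N_1(x,\xi))=\phi(x,\xi)$.

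Next, I localize the integral around the critical point via the global change of variables $\xi_1=N_1(x,\xi)+\eta$ and $y=Y_1(x,\xi)+w$. Taylor-expanding $\varphi_1(x,\cdot)$ at $N_1$ and $\varphi_2(\cdot,\xi)$ at $Y_1$, and using the critical point relations \eqref{(C1)}, I obtain
\[
\Phi(x,\xi,Y_1+w,N_1+\eta)=\phi(x,\xi)-w\cdot\eta+R(x,\xi,w,\eta),
\]
with $R$ quadratic in $(w,\eta)$ and with coefficients built from the second derivatives of $J_1,J_2$, hence small in the appropriate SG-sense by \eqref{hyp}. A further substitution $\tilde\eta=\eta+\partial_w R(x,\xi,w,\eta)$, a perturbation of the identity of size $O(\tau_1+\tau_2)$, is for small $\tau_1+\tau_2$ a global SG-diffeomorphism in $\eta$; after it, the phase becomes $\phi(x,\xi)-w\cdot\tilde\eta$ up to a purely $w$-dependent remainder which is absorbed into the amplitude. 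The remaining integration in $(w,\tilde\eta)$ is then a Kohn--Nirenberg type reduction of an SG-amplitude to a left symbol, of exactly the kind used in the proof of Theorem~\ref{thm:parama}; the reduction results from \cite{Coriasco:998.1,CT14} then produce a symbol $p\in S^{0,0}(\R^{2n})$ for which $I_{\varphi_1}\circ I_{\varphi_2}=\Op_\phi(p)$. For the family version, every estimate above depends only on the SG-seminorms of $J_{1s},J_{2s}$, so boundedness of $\{J_{js}\}$ in $S^{1,1}$ gives boundedness of $\{p_s\}$ in $S^{0,0}$.

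The main obstacle is the second substitution $\eta\mapsto\tilde\eta$ and the subsequent amplitude reduction: the map must be a global SG-diffeomorphism on $\R^n$, not just a local one near the critical point, and the pullback of the amplitude must satisfy uniform SG-estimates. The smallness of $\tau_1+\tau_2$ is decisive here, playing the same role as the smallness of $\tau$ in Theorem~\ref{thm:parama}, where the Jacobian of the analogous change of variables had to stay bounded away from zero uniformly on $\R^{2n}$.
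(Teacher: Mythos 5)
Your setup is correct (the oscillatory-integral representation of $I_{\varphi_1}\circ I_{\varphi_2}$, the identification of the critical point $(Y_1,N_1)$ via Theorems~\ref{exists}, \ref{cinfty}, and~\ref{properties}, the Taylor expansion around it, and the reduction of \eqref{eq:mainii} to \eqref{eq:maini} by taking adjoints), but the core of your argument --- the second substitution $\tilde\eta=\eta+\partial_w R$ followed by absorbing the remainder into the amplitude --- has a genuine gap. After the first (translational) change of variables, the phase has the form $\phi(x,\xi)-w\cdot\eta+R_1(x,\xi,\eta)+R_2(x,\xi,w)$, where $R_1$ is quadratic in $\eta$ and $R_2$ is quadratic in $w$ (cf.\ Lemma~\ref{lem:ab}). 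Since $\partial_w R=\partial_w R_2(w)$ is independent of $\eta$, the substitution $\tilde\eta=\eta+\partial_w R_2(w)$ does produce $-w\cdot\tilde\eta$ plus a purely $w$-dependent term, but it also carries $R_1(\eta)=R_1(\tilde\eta-\partial_w R_2(w))$, which still couples $w$ and $\tilde\eta$; the phase is therefore \emph{not} reduced to $\phi-w\cdot\tilde\eta$ plus a purely $w$-dependent remainder. More seriously, even if one could isolate a quadratic remainder $R$ in the phase, "absorbing $e^{iR}$ into the amplitude" does not give an SG amplitude: $R$ grows quadratically in $(w,\eta)$, so $e^{iR}$ is unboundedly oscillatory and the Kohn--Nirenberg reduction you invoke (and the one in Theorem~\ref{thm:parama}, where the amplitude $a_0\in S^{0,0,0}$ is genuinely bounded) does not apply.

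Two further omissions: (i) you apply the change of variables globally, but the estimates $|Y_1-x|\lesssim\tau\norm{x}$, $|N_1-\xi|\lesssim\tau\norm{\xi}$, and the quadratic Taylor control of the phase are only effective in a region where $(x',\xi')$ stays comparable to $(x,\xi)$; the contribution from far away has to be shown to be in $S^{-\infty,-\infty}$ by a non-stationary-phase argument (this is Proposition~\ref{prop:pinf}, via the cut-off $\chi$ of Definition~\ref{def:dblcutoff} and the vector fields $T_V$, $T_C$); (ii) the paper's change of variables uses the anisotropic scaling $x'=Y+y\,\omega^{-1}$, $\xi'=N+\eta\,\omega$ with $\omega=\norm{x}^{-1/2}\norm{\xi}^{1/2}$, which is what makes the bounds of Lemmas~\ref{lem:abest}--\ref{lem:rho} symmetric in $\norm{x}$ and $\norm{\xi}$ and lets the integration-by-parts operator $M$ of Lemma~\ref{lem:tL} (whose weight $\Gamma^{-1}$ gains decay in $\norm{y,\eta}$) close the $S^{0,0}$ estimate for $p_0$ in Proposition~\ref{prop:p0}. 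A correct proof should replace your diffeomorphism-plus-absorption step with exactly this cut-off/stationary-phase machinery; the rest of your plan (in particular the family statement following from seminorm control, and the adjoint trick for \eqref{eq:mainii}) is sound.
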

We will achieve the proof of Theorem \ref{thm:mainbis} through various intermediate results,
adapting the analogous scheme in \cite{Kumano-go:1}. Before getting to that, let us first state and 
prove our main Theorem \ref{thm:main}, which is obtained as a consequence of Theorems
\ref{thm:parama} and \ref{thm:mainbis}.
\begin{theorem}\label{thm:main}
	Let $\fy_j\in\Phr(\tau_j)$, $j=1,2, \dots, M$, $M\ge 2$, 
	be such that $\tau_1+\cdots+\tau_M\le\tau\le\frac{1}{4}$
	for some sufficiently small $\tau>0$, and set
	\begin{align*}
		\Phi_0(x,\xi)&=x\cdot\xi, 
		\\
		\Phi_1&=\fy_1, 
		\\
		\Phi_j&=\fy_1\sharp\cdots\sharp\fy_j, \; j= 2, \dots, M
		\\ 
		\Phi_{M,j}&=\fy_j\sharp\fy_{j+1}\sharp\cdots\sharp\fy_{M}, j=1,\dots,M-1,
		\\
		\Phi_{M,M}&=\fy_{M}, 
		\\
		\Phi_{M,M+1}(x,\xi)&=x\cdot\xi.
	\end{align*}
	Assume also $a_j\in S^{m_j,\mu_j}(\R^{2n})$, and set $A_j=\Op_{\fy_j}(a_j)$, $j=1,\dots,M$. 
	Then, the following holds true.
	\begin{enumerate}
		\item Given $q_j, q_{M,j}\in S^{0,0}(\R^{2n})$, $j=1,\dots, M$, such that
		\[
			\Op^*_{\Phi_j}(q_j)\circ I_{\Phi_j}=I, \quad I^*_{\Phi_{M,j}}\circ\Op_{\Phi_{M,j}}(q_{M,j})=I,
		\] 
		set $Q_j^*=\Op^*_{\Phi_j}(q_j)$, $Q_{M,j}=\Op_{\Phi_{M,j}}(q_{M,j})$, and
		\[
			R_j=I_{\Phi_{j-1}}\circ A_j\circ Q^*_j, \quad
			R_{M,j}=Q_{M,j}\circ A_j\circ I^*_{\Phi_{M,j+1}}, \quad j=1,\dots,M.
		\]
		Then, $R_j,R_{M,j}\in\Op(S^{0,0}(\R^{2n}))$, $j=1,\dots,M$, and
		\begin{equation}\label{eq:prodAj}
			A=A_1\circ\cdots\circ A_M=R_1\circ\cdots\circ R_{M}\circ I_{\Phi_M}
			=I^*_{\Phi_{M,1}}\circ R_{M,1}\circ\cdots\circ R_{M,M}.
		\end{equation}
		\item There exists $a\in S^{m,\mu}(\R^{2n})$, $m=m_1+\cdots+m_M$,
		$\mu=\mu_1+\cdots+\mu_M$ such that, setting $\phi=\fy_1\sharp\cdots\sharp\fy_M$,
		\[
			A=A_1\circ\cdots\circ A_M=\Op_{\phi}(a).
		\]
		\item For any $l\in\Z_+$ there exist $l^\prime\in\Z_+$, $C_l>0$ such that 
		\begin{equation}\label{eq:estsna}
			\vvvert a \vvvert_l^{m,\mu} \le C_l\prod_{j=1}^M \vvvert a_j \vvvert_{l^\prime}^{m_j,\mu_j}.
		\end{equation}
	\end{enumerate}
\end{theorem}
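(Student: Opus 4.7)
The plan is to prove parts (1), (2), (3) in sequence, leveraging Theorem \ref{thm:parama} and Theorem \ref{thm:mainbis} on the FIO side, the composition theorems \ref{thm:compi}, \ref{thm:compii} on the pseudodifferential side, and the associativity of the multi-product established in Theorem \ref{properties}(3).

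For part~(1), the core idea is a telescoping trick based on the parametrix identities. Since $\Phi_0(x,\xi)=x\cdot\xi$ gives $I_{\Phi_0}=I$, and Theorem \ref{thm:parama} provides $q_j \in S^{0,0}$ with $Q_j^*\circ I_{\Phi_j}=I$, one may insert these identities between consecutive factors of $A$ and regroup to obtain
\[
A = I_{\Phi_0}\circ A_1\circ(Q_1^*\circ I_{\Phi_1})\circ A_2\circ\cdots\circ A_M\circ(Q_M^*\circ I_{\Phi_M}) = R_1\circ\cdots\circ R_M\circ I_{\Phi_M}.
\]
To show $R_j$ is a pseudodifferential operator (of the natural order $(m_j,\mu_j)$), use the associativity $\Phi_j=\Phi_{j-1}\sharp\fy_j$ and an extension of Theorem \ref{thm:mainbis} to nontrivial type~I symbols, namely $I_{\Phi_{j-1}}\circ\Op_{\fy_j}(a_j)=\Op_{\Phi_j}(\tilde a_j)$ with $\tilde a_j\in S^{m_j,\mu_j}$; then $R_j=\Op_{\Phi_j}(\tilde a_j)\circ\Op^*_{\Phi_j}(q_j)$ is a composition of type~I and type~II FIOs with the same regular phase $\Phi_j$, so Theorem \ref{thm:compii} places it in $\Op(S^{m_j,\mu_j})$. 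The dual expression $A=I^*_{\Phi_{M,1}}\circ R_{M,1}\circ\cdots\circ R_{M,M}$ is obtained by an entirely symmetric telescoping based on the identity $I^*_{\Phi_{M,j}}\circ\Op_{\Phi_{M,j}}(q_{M,j})=I$.

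For part~(2), iterating Theorem \ref{thm:compi} yields $R_1\circ\cdots\circ R_M\in\Op(S^{m,\mu})$ with $m=\sum m_j$, $\mu=\sum\mu_j$; a final application of the first formula in Theorem \ref{thm:compi} to $(R_1\circ\cdots\circ R_M)\circ I_{\Phi_M}$ produces $A=\Op_\phi(a)$ for some $a\in S^{m,\mu}$ and $\phi=\Phi_M=\fy_1\sharp\cdots\sharp\fy_M$. Part~(3) is then a matter of tracking seminorms along the telescoping: the parametrix symbols $q_j$ are bounded in $S^{0,0}$ by the last statement of Theorem \ref{thm:parama}; the composition formulae of Theorems \ref{thm:mainbis}, \ref{thm:compi}, \ref{thm:compii} produce symbols whose seminorms depend continuously on those of the inputs; composing all these continuity estimates along the construction yields \eqref{eq:estsna}.

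The main technical obstacle is the extension of Theorem \ref{thm:mainbis} needed in part~(1) to absorb a nontrivial symbol $a_j$ into a type~I FIO with composite phase. Two natural routes are available: either adapt the scheme proving Theorem \ref{thm:mainbis} while carrying $a_j$ through the calculation, or first write $A_j=\Op(p_j)\circ I_{\fy_j}$ modulo a smoothing remainder---a factorization obtained by applying the parametrix relation \eqref{eq:parami} to $I^*_{\fy_j}\circ A_j\in\Op(S^{m_j,\mu_j})$ (Theorem \ref{thm:compii})---and then combine Theorem \ref{thm:mainbis} applied to $I_{\Phi_{j-1}}\circ I_{\fy_j}$ with Theorem \ref{thm:compi} to commute $\Op(p_j)$ past the resulting FIO. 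Throughout, it will be essential to check that the accumulated smallness parameter is preserved at each iteration, so that all intermediate multi-products $\Phi_j$ (and their sharp-products with $\fy_{j+1}$) remain in $\Phr(\tau')$ for a $\tau'$ small enough to invoke Theorem \ref{thm:parama} and Theorem \ref{thm:mainbis} at every step.
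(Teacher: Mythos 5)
Your overall architecture --- telescoping with the parametrices from Theorem~\ref{thm:parama}, then assembling via Theorems~\ref{thm:compi}, \ref{thm:compii}, \ref{thm:mainbis} and the associativity from Theorem~\ref{properties}(3) --- is the same as the paper's, and your sketches of parts~(2) and~(3) agree with it. The only real divergence is in how $R_j\in\Op(S^{m_j,\mu_j})$ is established. You correctly flag this as the technical crux and offer two routes, but the paper uses a cleaner third one: using \eqref{eq:parama1} for the single phase $\fy_j$, it inserts $I=I_{\fy_j}\circ P_j^*$ (with $P_j^*=\Op^*_{\fy_j}(p_j)$) \emph{on the left of $A_j$}, so that
\[
R_j = I_{\Phi_{j-1}}\circ A_j\circ Q_j^* = (I_{\Phi_{j-1}}\circ I_{\fy_j})\circ(P_j^*\circ A_j)\circ Q_j^*.
\]
Then $I_{\Phi_{j-1}}\circ I_{\fy_j}=\Op_{\Phi_j}(\,\cdot\,)$ with symbol in $S^{0,0}$ by Theorem~\ref{thm:mainbis} together with $\Phi_{j-1}\sharp\fy_j=\Phi_j$; $P_j^*\circ A_j\in\Op(S^{m_j,\mu_j})$ by Theorem~\ref{thm:compii} (both factors carry the \emph{same} phase $\fy_j$); composing that pseudodifferential operator with $Q_j^*$ gives $\Op^*_{\Phi_j}(d_j)$, $d_j\in S^{m_j,\mu_j}$, by Theorem~\ref{thm:compi}; and finally type~I composed with type~II at the common phase $\Phi_j$ lands in $\Op(S^{m_j,\mu_j})$ by Theorem~\ref{thm:compii}. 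This requires neither an extension of Theorem~\ref{thm:mainbis} to a general type~I amplitude (your route~1 --- workable, but considerably more labor) nor the commutation of a pseudodifferential factor past an FIO (your route~2). Your route~2 is closest in spirit to the paper's, but two details as stated are off: the desired factorization $A_j=\Op(p_j)\circ I_{\fy_j}$ is obtained exactly via $\Op(p_j)=A_j\circ P_j^*$ with $P_j^*$ the two-sided inverse of $I_{\fy_j}$ from Theorem~\ref{thm:parama}, not by applying \eqref{eq:parami} to $I^*_{\fy_j}\circ A_j$ (which, via \eqref{eq:paramii} and Theorem~\ref{thm:compi}, only returns $A_j$ as a type~I FIO with phase $\fy_j$ modulo smoothing, not as a pseudodifferential operator composed with $I_{\fy_j}$); and once one writes $R_j=I_{\Phi_{j-1}}\circ\Op(p_j)\circ I_{\fy_j}\circ Q_j^*$, the factor $\Op(p_j)$ sits \emph{between} $I_{\Phi_{j-1}}$ and $I_{\fy_j}$, so one cannot invoke Theorem~\ref{thm:mainbis} on $I_{\Phi_{j-1}}\circ I_{\fy_j}$ without first commuting $\Op(p_j)$ past one of its neighbours, itself another parametrix argument that would need to be justified. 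Finally, you state $R_j\in\Op(S^{m_j,\mu_j})$; this is indeed what the paper's proof produces, and the $S^{0,0}$ appearing in the theorem's statement looks like a misprint.
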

\begin{proof}
	The existence of $q_j, q_{M,j}\in S^{0,0}$, $j=1,\dots, M$, with the desired properties
	follows from Theorem \ref{thm:parama}. We also notice that, trivially, $I_{\Phi_0}=I_{\Phi_{M,M+1}}=I$, so that,
	inserting either $I=Q_1^*\circ I_{\Phi_1}=\cdots=Q^*_M\circ I_{\Phi_M}$ or
	$I=I_{\Phi_{M,1}}^*\circ Q_{M,1}=\cdots=I_{\Phi_{M,M}}^*\circ Q_{M,M}$, we indeed find
	\begin{align*}
		A_1\circ\cdots\circ A_M&= I_{\Phi_0}\circ A_1\circ Q_1^*\circ I_{\Phi_1}\circ A_2\circ\cdots
		\circ I_{\Phi_M}\circ A_M\circ Q^*_{M}\circ I_{\Phi_M}
		\\
		&=R_1\circ\cdots\circ R_M\circ I_{\Phi_M}
		\\
		&=I^*_{\Phi_{M,1}}\circ Q_{M,1}\circ A_1\circ I^*_{\Phi_{M,2}}\circ Q_{M,2}\circ A_2\circ\cdots
		\circ I^*_{\Phi_{M,M}}\circ Q_{M,M}\circ A_M\circ I_{\Phi_{M,M+1}}
		\\
		&= I^*_{\Phi_{M,1}}\circ R_{M,1}\circ\cdots\circ R_{M,M},
	\end{align*}
	as claimed. Now, we observe that, again in view of Theorem \ref{thm:parama}, there exists $p_j\in S^{0,0}$
	such that $I_{\fy_j}\circ \Op^*_{\fy_j}(p_j)=I$, $j=1,\dots,M$. Setting $P_j^*=\Op_{\fy_j}^*(p_j)$, 
	and inserting it into the definition of $R_j$, by Theorem \ref{thm:mainbis} we then find, for $j=1,\dots,M$,
	\[
		R_j=(I_{\Phi_{j-1}}\circ I_{\fy_j})\circ(P_j^*\circ A_j)\circ Q_j^*
		=I_{\Phi_{j-1}\sharp\fy_j}\circ(P_j^*\circ A_j)\circ Q_j^*=I_{\Phi_j}\circ(P_j^*\circ A_j)\circ Q_j^*.
	\]
	Theorem \ref{thm:compii} implies that $P_j^*\circ A_j\in\Op(S^{m_j,\mu_j})$, and Theorem \ref{thm:compi} 
	then implies that $(P_j^*\circ A_j)\circ Q_j^*=\Op^*_{\Phi_j}(d_j)$, for some $d_j\in S^{m_j,\mu_j}$,
	$j=1,\dots, M$. Another application of Theorem \ref{thm:compi} gives that 
	\[
		R_j=I_{\Phi_j}\circ\Op^*_{\Phi_j}(d_j)\in\Op(S^{m_j,\mu_j}), j=1,\dots, M,
	\]
	so that the standard composition rules for SG pseudodifferential operators and a further application of
	Theorem \ref{thm:compi} imply, for $\phi=\fy_1\sharp\cdots\sharp\fy_M$ and a suitable $a\in S^{m,\mu}$,
	\[
		A=A_1\circ\cdots\circ A_M=\Op_\phi(a),
	\]
	as claimed. Similar considerations hold for $R_{M,j}$, $j=1, \dots, M$ and the representation formula
	\[
		A_1\circ\cdots\circ A_M=I^*_{\Phi_{M,1}}\circ R_{M,1}\circ\cdots\circ R_{M,M}.
	\]
	The estimate \eqref{eq:estsna} follows from the composition results in \cite{Coriasco:998.1}, applied repeatedly
	to \eqref{eq:prodAj}, observing that the amplitudes of the resulting operators depend continuously on those of the
	involved factors. The proof is complete.
\end{proof}
To start proving Theorem \ref{thm:mainbis}, with two SG phase functions $\varphi_1,\varphi_2$ as in the
corresponding hypotheses and $u\in\SX(\R^n)$, let us write, as it is possible,
\[
	[(I_{\varphi_1}\circ I_{\varphi_2})u](x)=\iiint e^{i(\varphi_1(x,\xi^\prime)-x^\prime\cdot\xi^\prime+\varphi_2(x^\prime,\xi))}
	\,\widehat{u}(\xi)\,\dcut\xi^\prime dx^\prime \dcut\xi.
\]
Now, with $\phi=\varphi_1\sharp\varphi_2$, set
\beqs\label{fizero}
	\varphi_0(x,x^\prime,\xi^\prime,\xi)=\varphi_1(x,\xi^\prime)-x^\prime\cdot\xi^\prime+\varphi_2(x^\prime,\xi)-\phi(x,\xi),
\eeqs
and consider, in the sense of oscillatory integrals,
\begin{equation}\label{eq:defsymbp}
	p(x,\xi)=\iint e^{i\varphi_0(x,x^\prime,\xi^\prime,\xi)}\,\dcut\xi^\prime dx^\prime.
\end{equation}
Then, we can write
\[
	[(I_{\varphi_1}\circ I_{\varphi_2})u](x)=\int e^{i\phi(x,\xi)}\,p(x,\xi)\,\widehat{u}(\xi)\,\dcut\xi,\quad u\in\SX(\R^n),
\]
which gives the desired claim, if we show that \eqref{eq:defsymbp} indeed defines a symbol $p\in S^{0,0}(\R^{2n})$. 
Let us now define the adapted cut-off functions which will be needed for the proof of this fact.
\begin{definition}\label{def:dblcutoff}
We set
\[
	\chi(x,x^\prime,\xi^\prime,\xi)=\chi_a(x,x^\prime)\cdot \chi_a(\xi,\xi^\prime),
\]
where, with $a>0$ to be fixed later and $w,w^\prime\in\R^n$, we assume
\[
	\chi_a(w,w^\prime)=\psi(a(w-w^\prime)\norm{w}^{-1}),
\]
for a fixed cut-off function $\psi\in C_0^\infty(\R^n)$. In particular, we also assume that, for all $w\in\R^n$, $0\le\psi(w)\le1$,
$\supp\,\psi = B_\frac{2}{3}(0)$, $\psi|_{B_\frac{1}{2}(0)}\equiv1$, $w\notin B_\frac{1}{2}(0)\Rightarrow 0\le\psi(w)<1$,
where $B_r(w_0)$ is the closed ball in $\R^n$ centred at $w_0$ with radius $r>0$. 
\end{definition}
For the proof of the next lemma see, e.g., \cite{Coriasco:998.1}.
\begin{lemma}\label{lem:chisym}
	 i) For any multiindeces $\gamma_1,\gamma_2\in\Z_+^n$, the function $\chi_a(w,w^\prime)$ introduced in Definition
	\ref{def:dblcutoff} satisfies, for all $w,w^\prime\in\R^n$,
	\begin{equation}\label{eq:chisym1}
		|\partial^{\gamma_1+\gamma_2}_{w^\prime}\chi_a(w,w^\prime)|\lesssim
		\norm{w}^{-|\gamma_1|}\norm{w^\prime}^{-|\gamma_2|}.
	\end{equation}
	ii) For any multiindeces $\alpha_1\alpha_2,\beta_1,\beta_2\in\Z_+^n$, 
	the function $\chi(x,x^\prime,\xi^\prime,\xi)$ introduced in 
	Definition \ref{def:dblcutoff} satisfies, for all $x,x^\prime,\xi,\xi^\prime$, the estimates
	\begin{equation}\label{eq:chisym2}
		|\partial^{\alpha_1+\alpha_2}_{x^\prime}\partial^{\beta_1+\beta_2}_{\xi^\prime}\chi(x,x^\prime,\xi^\prime,\xi)|
		\lesssim 
		\norm{x}^{-|\alpha_1|}\norm{x^\prime}^{-|\alpha_2|}
		\norm{\xi}^{-|\beta_1|}\norm{\xi^\prime}^{-|\beta_2|}.
	\end{equation}
\end{lemma}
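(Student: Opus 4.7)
The plan is to reduce part (ii) to part (i) by exploiting the product structure. Since $\chi(x,x',\xi',\xi) = \chi_a(x,x')\chi_a(\xi,\xi')$ is a product of two factors depending on disjoint sets of variables, the derivative $\partial^{\alpha_1+\alpha_2}_{x'}\partial^{\beta_1+\beta_2}_{\xi'}$ factors as well: the $x'$-derivatives act only on $\chi_a(x,x')$ and the $\xi'$-derivatives only on $\chi_a(\xi,\xi')$. Applying \eqref{eq:chisym1} twice, with $(w,w') = (x,x')$ and $(w,w') = (\xi,\xi')$ respectively, immediately yields \eqref{eq:chisym2}.

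For \eqref{eq:chisym1}, I would first compute the derivative directly. The chain rule gives, for any multiindex $\gamma\in\Z_+^n$,
\[
\partial^{\gamma}_{w'}\chi_a(w,w') = (-a)^{|\gamma|}\langle w \rangle^{-|\gamma|}(\partial^{\gamma}\psi)(a(w-w')\langle w \rangle^{-1}),
\]
so, since $\psi\in C_0^\infty(\R^n)$ has all derivatives bounded, one obtains
\[
|\partial^{\gamma_1+\gamma_2}_{w'}\chi_a(w,w')| \lesssim \langle w \rangle^{-|\gamma_1|-|\gamma_2|}
\]
on all of $\R^{2n}$, with a constant depending on $a$ and on the chosen cut-off $\psi$.

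The key observation is then that on $\supp\,\chi_a$ the weights $\langle w \rangle$ and $\langle w' \rangle$ are equivalent. Indeed, by the support properties of $\psi$, one has $|a(w-w')\langle w \rangle^{-1}| \le 2/3$ there, hence $|w-w'|\le \tfrac{2}{3a}\langle w \rangle$. The elementary inequality $|\langle w \rangle - \langle w' \rangle| \le |w-w'|$ then gives
\[
\left(1-\frac{2}{3a}\right)\langle w \rangle \le \langle w' \rangle \le \left(1+\frac{2}{3a}\right)\langle w \rangle,
\]
provided $a$ is fixed sufficiently large, say $a\ge 1$. Since $\partial^{\gamma_1+\gamma_2}_{w'}\chi_a$ vanishes outside $\supp\,\chi_a$, we may rewrite $\langle w \rangle^{-|\gamma_1|-|\gamma_2|} = \langle w\rangle^{-|\gamma_1|}\cdot\langle w \rangle^{-|\gamma_2|} \asymp \langle w\rangle^{-|\gamma_1|}\langle w'\rangle^{-|\gamma_2|}$, obtaining \eqref{eq:chisym1}.

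The only substantive point is to ensure that the parameter $a$ is chosen large enough for the Peetre-type equivalence $\langle w \rangle \asymp \langle w' \rangle$ to hold on $\supp\,\chi_a$; since Definition \ref{def:dblcutoff} leaves $a$ to be fixed later, this merely imposes a lower bound compatible with later requirements, and is not really an obstacle. No further difficulties are expected.
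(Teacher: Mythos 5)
Your argument is correct. The paper itself does not prove this lemma but instead defers to \cite{Coriasco:998.1}; your derivation -- factoring part (ii) over the two independent groups of variables, computing $\partial^\gamma_{w'}\chi_a$ directly via the chain rule (which produces the factor $\langle w\rangle^{-|\gamma|}$), and then exploiting the Peetre-type equivalence $\langle w\rangle\asymp\langle w'\rangle$ on $\supp\chi_a$ (valid once $a$ is large enough, indeed $a>2/3$ already suffices to make $1-\tfrac{2}{3a}>0$) to trade a power of $\langle w\rangle^{-1}$ for $\langle w'\rangle^{-1}$ -- is exactly the standard argument for such cut-off estimates, and I see no gap.
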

\begin{remark}\label{rem:dblcutoff}
In view of Definition \ref{def:dblcutoff}, 
\begin{align*}
	1-\chi(x,x^\prime,\xi^\prime,\xi)&=1-\chi_a(x,x^\prime)+\chi_a(x,x^\prime)-\chi_a(x,x^\prime)\cdot \chi_a(\xi,\xi^\prime)
	\\
	&=1-\chi_a(x,x^\prime)+\chi_a(x,x^\prime)\cdot(1-\chi_a(\xi,\xi^\prime)),
\end{align*}
which implies that on $\supp(1-\chi(x,x^\prime,\xi^\prime,\xi))$ either $|x-x^\prime|\ge \dfrac{1}{2a}\norm{x}$ or 
$|\xi-\xi^\prime|\ge \dfrac{1}{2a}\norm{\xi}$.
\end{remark}
Now write $p$ in \eqref{eq:defsymbp} as $p=p_0+p_\infty$ with
\begin{align}
	\label{eq:defp0}
	p_0(x,\xi)&=
	\iint e^{i\varphi_0(x,x^\prime,\xi^\prime,\xi)}\,\chi(x,x^\prime,\xi^\prime,\xi)\,\dcut\xi^\prime dx^\prime,
	\\
	\label{eq:defpinf}
	p_\infty(x,\xi)&=
	\iint e^{i\varphi_0(x,x^\prime,\xi^\prime,\xi)}\,(1-\chi(x,x^\prime,\xi^\prime,\xi))\,\dcut\xi^\prime dx^\prime.
\end{align}
We analyze separately $p_0$ and $p_\infty$.
\begin{proposition}\label{prop:pinf}
	Under the hypotheses of Theorem \ref{thm:mainbis}, 
	for $p_\infty$ defined in \eqref{eq:defpinf} we have $p_\infty\in S^{-\infty,-\infty}(\R^{2n})$.
\end{proposition}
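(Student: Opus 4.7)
The plan is to run a non-stationary-phase integration by parts argument on the oscillatory integral defining $p_\infty$, exploiting the support information provided by Remark \ref{rem:dblcutoff}. First I would decompose
\[
	1 - \chi = (1 - \chi_a(x,x')) + \chi_a(x,x')(1 - \chi_a(\xi,\xi'))
\]
and split $p_\infty = p_{\infty,1} + p_{\infty,2}$ accordingly, so that each piece is supported where exactly one of the two alternatives of Remark \ref{rem:dblcutoff} holds. The pieces being symmetric, I focus on $p_{\infty,1}$, whose amplitude is supported in $|x-x'|\ge(2a)^{-1}\langle x\rangle$.

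Using $\partial_{\xi'}\varphi_0 = -(x'-x)+J'_{1,\xi}(x,\xi')$ together with the SG estimate $|J'_{1,\xi}(x,\xi')|\le\tau_1\langle x\rangle$ from Definition \ref{def:phaser} and the hypothesis $\tau_1+\tau_2<1/4$, a choice of $a$ with $(2a)^{-1}>\tau_1$ yields $|\partial_{\xi'}\varphi_0|\gtrsim\langle x\rangle$ on the support. A case distinction $|x'|\le 2|x|$ vs.\ $|x'|>2|x|$, combined with $\tau_1<1/3$, refines this to the two-sided bound $|\partial_{\xi'}\varphi_0|\gtrsim\langle x\rangle+\langle x'\rangle$; a symmetric bound $|\partial_{x'}\varphi_0|\gtrsim\langle\xi\rangle+\langle\xi'\rangle$ holds on the support of $\chi_a(x,x')(1-\chi_a(\xi,\xi'))$. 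I would then set
\[
	L_1 = \frac{\partial_{\xi'}\varphi_0\cdot\partial_{\xi'}}{i|\partial_{\xi'}\varphi_0|^2}, \qquad L_1 e^{i\varphi_0}=e^{i\varphi_0},
\]
transpose and iterate to rewrite, for every $N\in\mathbb{N}$,
\[
	p_{\infty,1}(x,\xi) = \iint e^{i\varphi_0}\,(L_1^t)^N\bigl[(1-\chi_a(x,x'))\bigr]\,\dcut\xi'\, dx'.
\]
Combining Lemma \ref{lem:chisym} for the cutoff derivatives with the SG estimates on $\varphi_1\in S^{1,1}$ (each $\partial_{\xi'}$ acting on $|\partial_{\xi'}\varphi_0|^{-2}$ producing the factor $|\varphi''_{1,\xi\xi}|\lesssim\langle x\rangle\langle\xi'\rangle^{-1}$, which combines with the lower bound to yield one $\langle\xi'\rangle^{-1}$), the iterated amplitude is bounded pointwise by $\langle x\rangle^{-N}\langle x'\rangle^{-N}\langle\xi'\rangle^{-N}$. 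For $N>n$ the $(x',\xi')$-integral converges absolutely and gives $|p_{\infty,1}(x,\xi)|\lesssim\langle x\rangle^{-N'}$ for every $N'$; an analogous use of $L_2 = (i|\partial_{x'}\varphi_0|^2)^{-1}\partial_{x'}\varphi_0\cdot\partial_{x'}$ on $p_{\infty,2}$ gives $|p_{\infty,2}(x,\xi)|\lesssim\langle\xi\rangle^{-N'}$.

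To upgrade these one-variable bounds to the joint decay $\langle x\rangle^{-N}\langle\xi\rangle^{-N'}$ required for membership in $S^{-\infty,-\infty}$, I would insert into the amplitude of $p_{\infty,1}$ the secondary partition $\chi_a(\xi,\xi')+(1-\chi_a(\xi,\xi')) = 1$ and split into a ``far'' part supported where $|\xi'-\xi|\ge c\langle\xi\rangle$ (on which both $|\partial_{\xi'}\varphi_0|\gtrsim\langle x\rangle$ and $|\partial_{x'}\varphi_0|\gtrsim\langle\xi\rangle$ hold, so that alternating $L_1^t$ and $L_2^t$ yields $\langle x\rangle^{-N}\langle\xi\rangle^{-N'}$ directly) and a ``close'' part supported where $|\xi'-\xi|\le c\langle\xi\rangle$ (a $\xi'$-ball of volume $\lesssim\langle\xi\rangle^n$, on which iterating $L_1^t$ sufficiently many times gives amplitude $\lesssim\langle x\rangle^{-N}\langle x'\rangle^{-N}\langle\xi'\rangle^{-(N'+n+1)}$; integrating over the small $\xi'$-ball, where $\langle\xi'\rangle$ is comparable to $\langle\xi\rangle$ for $\langle\xi\rangle$ large, yields $\langle\xi\rangle^{-N'}$, with a trivial bound in the regime of bounded $\langle\xi\rangle$). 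The piece $p_{\infty,2}$ is handled by the symmetric procedure, and the derivatives $\partial_x^\alpha\partial_\xi^\beta p_\infty$ are treated by the same scheme, the polynomial factors in $(x',\xi')$ produced by differentiation under the integral being absorbed by increasing $N$ and $N'$. The main obstacle is precisely this joint-decay step: the single-variable estimates are immediate from the non-degeneracy of $\varphi_0$ in each coordinate block, but extracting $\langle x\rangle^{-N}\langle\xi\rangle^{-N'}$ simultaneously demands the secondary cutoff decomposition and a careful combination of two distinct arguments on the two resulting sub-pieces.
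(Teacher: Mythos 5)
Your proposal is correct and follows essentially the same strategy as the paper's proof: decompose the support of $1-\chi$ into regimes according to whether $(x,x')$, respectively $(\xi,\xi')$, are comparable or far apart, and on each regime run a non-stationary-phase integration by parts in $\xi'$ or $x'$ with the vector field whose coercivity estimate ($|\partial_{\xi'}\varphi_0|\gtrsim\langle x\rangle+\langle x'\rangle$, respectively $|\partial_{x'}\varphi_0|\gtrsim\langle\xi\rangle+\langle\xi'\rangle$) is available there, obtaining the joint $\langle x\rangle^{-N}\langle\xi\rangle^{-N'}$ decay from the combination of coefficient decay, derivative gain, and the support comparability or ball-volume bound, exactly as you describe. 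The only differences from the paper are cosmetic: the paper first factors $p_\infty=e^{-iJ(x,\xi)}\widetilde p_\infty$ so as to work with the phase $\varphi_\infty$ having the same $(x',\xi')$-gradients (avoiding the polynomial factors from $\partial_x^\alpha\partial_\xi^\beta e^{-iJ}$ that you instead absorb by taking extra integrations by parts), and it produces the three support pieces in one stroke via a secondary cutoff $\chi_b$ with $b>2a$, applying on each piece a single vector field (the coefficient bound itself, e.g.\ $|\partial^\alpha_{x'}V|\lesssim\langle x'\rangle^{-|\alpha|}(\langle\xi\rangle+\langle\xi'\rangle)^{-1}$, already yields joint decay on the pieces where one pair of variables is comparable) rather than alternating two on the ``far'' piece as you do.
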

\begin{proof}
	Define
\[\varphi_\infty(x,x^\prime,\xi^\prime,\xi)=
		\varphi_1(x,\xi^\prime)-x^\prime\cdot\xi^\prime+\varphi_2(x^\prime,\xi)-x\cdot\xi,\]
		so we have from \eqref{fizero}
\[ \varphi_0(x,x^\prime,\xi^\prime,\xi)=\varphi_\infty(x,x^\prime,\xi^\prime,\xi)+x\cdot\xi-\phi(x,\xi)\]
and \[p_\infty(x,\xi)=e^{-iJ(x,\xi)}p_\infty^\prime(x,\xi),\]
	where we have set $J(x,\xi)=\phi(x,\xi)-x\cdot\xi$ and 
	\[\widetilde{p}_\infty(x,\xi)=
		\iint e^{i\varphi_\infty(x,x^\prime,\xi^\prime,\xi)}(1-\chi(x,x^\prime,\xi^\prime,\xi))\,\dcut\xi^\prime dx^\prime.
	\]
	It is straightforward, since $J\in S^{1,1}$ for small $\tau>0$,
	that it is enough to prove that $\widetilde{p}_\infty\in S^{-\infty,-\infty}$ to achieve the desired result. Also, in view of the
	definition of $\varphi_\infty$,
	\begin{align*}
		\varphi^\prime_{\infty,x}(x,x^\prime,\xi^\prime,\xi)&=\xi^\prime-\xi+J^\prime_{1,x}(x,\xi^\prime),
		\\
		\varphi^\prime_{\infty,\xi^\prime}(x,x^\prime,\xi^\prime,\xi)&=x-x^\prime+J^\prime_{1,\xi}(x,\xi^\prime),
		\\
		\varphi^\prime_{\infty,x^\prime}(x,x^\prime,\xi^\prime,\xi)&=\xi-\xi^\prime+J^\prime_{2,x}(x^\prime,\xi),
		\\
		\varphi^\prime_{\infty,\xi}(x,x^\prime,\xi^\prime,\xi)&=x^\prime-x+J^\prime_{2,\xi}(x^\prime,\xi).		
	\end{align*}
	Then, on $\supp(1-\chi(x,x^\prime,\xi^\prime,\xi))$, for a known $c>0$ and a sufficiently small $\tau>0$,
	depending on $\varphi_1$, $\varphi_2$, and $\chi$, there exist suitable $k_1,k_2>0$, such that either
	\begin{align*}
		|\varphi^\prime_{\infty,x^\prime}(x,\xi^\prime,x^\prime,\xi)|&\ge |\xi-\xi^\prime|-c\tau\norm{\xi}
		\ge  |\xi-\xi^\prime|-c\tau |\xi-\xi^\prime|=(1-c\tau) |\xi-\xi^\prime|
		\\
		&\ge k_1(\norm{\xi}+\norm{\xi^\prime})>0,
	\end{align*}
	or
	\begin{align*}
		|\varphi^\prime_{\infty,\xi^\prime}(x,\xi^\prime,x^\prime,\xi)|&\ge |x-x^\prime|-c\tau\norm{x}
		\ge |x-x^\prime|-c\tau|x-x^\prime|=(1-c\tau)|x-x^\prime|
		\\
		&\ge k_2(\norm{x}+\norm{x^\prime})>0.
	\end{align*}
	Let us set, for $b>2a>0$,
	\begin{align}
		\label{eq:p1}
		\widetilde{p}_{1\infty}(x,\xi)
		&=\iint e^{i\varphi_\infty(x,\xi^\prime,x^\prime,\xi)}\,(1-\chi(x,x^\prime,\xi^\prime,\xi))\cdot\chi_b(x,x^\prime)
		\,\dcut\xi^\prime dx^\prime,
		\\
		\label{eq:p2}
		\widetilde{p}_{2\infty}(x,\xi)&=
		\iint e^{i\varphi_\infty(x,\xi^\prime,x^\prime,\xi)}\,(1-\chi(x,x^\prime,\xi^\prime,\xi))\cdot(1-\chi_b(x,x^\prime))
		\cdot\chi_b(\xi,\xi^\prime)\,\dcut\xi^\prime dx^\prime,
		\\
		\label{eq:p3}
		\widetilde{p}_{3\infty}(x,\xi)&=
		\iint e^{i\varphi_\infty(x,\xi^\prime,x^\prime,\xi)}\,(1-\chi(x,x^\prime,\xi^\prime,\xi))\cdot(1-\chi_b(x,x^\prime))
		\cdot(1-\chi_b(\xi,\xi^\prime))\,\dcut\xi^\prime dx^\prime,
	\end{align}
	so that \[\widetilde{p}_\infty(x,\xi)=\widetilde{p}_{1\infty}(x,\xi)+\widetilde{p}_{2\infty}(x,\xi)
		+\widetilde{p}_{3\infty}(x,\xi).\]
	Then, the operator
	\[T_V=-i|\varphi^\prime_{\infty,x^\prime}(x,x^\prime,\xi^\prime,\xi)|^{-2}\,
		\varphi^\prime_{\infty,x^\prime}(x,x^\prime,\xi^\prime,\xi)\cdot\nabla_{x^\prime}
		=V(x,x^\prime,\xi^\prime,\xi)\cdot\nabla_{x^\prime}\]
	such that
	\[T_V e^{i\varphi_\infty(x,\xi^\prime,x^\prime,\xi)}=e^{i\varphi_\infty(x,\xi^\prime,x^\prime,\xi)}\]
	is well defined on the support if the integrand of \eqref{eq:p1}, and, respectively, the operator
	\[T_C=-i|\varphi^\prime_{\infty,\xi^\prime}(x,x^\prime,\xi^\prime,\xi)|^{-2}\,
		\varphi^\prime_{\infty,\xi^\prime}(x,x^\prime,\xi^\prime,\xi)\cdot\nabla_{\xi^\prime}
		=C(x,x^\prime,\xi^\prime,\xi)\cdot\nabla_{\xi^\prime}\]
such that
\[T_C e^{i\varphi_\infty(x,\xi^\prime,x^\prime,\xi)}=e^{i\varphi_\infty(x,\xi^\prime,x^\prime,\xi)}\]
	is well defined on the support of the
	integrand of \eqref{eq:p2}. Both $T_V$ and $T_C$ are well defined on the support of the
	integrand of \eqref{eq:p3}. Notice also that the coefficients of $T_V$ satisfy,
	on the support of the integrand of \eqref{eq:p1}, estimates of the type 
	\begin{equation}\label{eq:coeffest1}
		|\partial^\alpha_{x^\prime}\partial^\beta_{\xi^\prime}V(x,x^\prime,\xi^\prime,\xi)|\lesssim 
	 	\norm{x^\prime}^{-|\alpha|}\norm{\xi^\prime}^{-|\beta|}(\norm{\xi}+\norm{\xi^\prime})^{-1}.
	\end{equation}
	Since there $\norm{x}\asymp\norm{x^\prime}$, the same holds with $x$ in place of $x^\prime$. Similarly, the coefficients
	of $T_C$ satisfy, on the support of the integrand of \eqref{eq:p2}, estimates of the type 
	\begin{equation}\label{eq:coeffest2}
		|\partial^\alpha_{x^\prime}\partial^\beta_{\xi^\prime}C(x,x^\prime,\xi^\prime,\xi)|\lesssim
		\norm{x^\prime}^{-|\alpha|}\norm{\xi^\prime}^{-|\beta|}(\norm{x}+\norm{x^\prime})^{-1},
	\end{equation}
	as well as the analogous ones with $\xi$ in place of $\xi^\prime$, since $\norm{\xi}\asymp\norm{\xi^\prime}$ there. 
	Moreover, both \eqref{eq:coeffest1} and \eqref{eq:coeffest2} hold on the support of the intengrand in \eqref{eq:p3}.
	The claim then follows by repeated integration by parts, using $T_C$ and/or $T_V$ in the expressions of
	$p_{3\infty}$, $p_{2\infty}$, and $p_{1\infty}$, and recalling Lemma \ref{lem:chisym}.
\end{proof}

\begin{proposition}\label{prop:p0}
	Under the hypotheses of Theorem \ref{thm:mainbis}, 
	for $p_0$ defined in \eqref{eq:defp0} we have $p_0\in S^{0,0}(\R^{2n})$.
\end{proposition}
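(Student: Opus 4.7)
The strategy, which follows the pattern of \cite{Kumano-go:1}, is to recenter the integral at the unique critical point of the phase $\varphi_0(x,\,\cdot\,,\,\cdot\,,\xi)$ and, after Taylor-expansion, reduce \eqref{eq:defp0} to an oscillatory integral whose phase is $-y\cdot\eta$ perturbed by small, smooth remainders, so that standard integration-by-parts produces the $S^{0,0}$-estimates.

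First, I would perform the change of variables $x'=Y(x,\xi)+y$, $\xi'=N(x,\xi)+\eta$ (Jacobian equal to $1$), where $(Y,N)=(Y,N)(x,\xi)$ is the unique smooth critical point of $\varphi_0(x,\,\cdot\,,\,\cdot\,,\xi)$ provided by Theorems \ref{exists} and \ref{cinfty}. By Definition \ref{mprod} and system \eqref{(C1)}, both $\varphi_0$ and its gradient with respect to $(x',\xi')$ vanish at $(Y,N)$, so Taylor's formula yields
\[
\varphi_0(x,Y+y,N+\eta,\xi) \;=\; -y\cdot\eta \;+\; r_1(x,N,\eta) \;+\; r_2(Y,y,\xi),
\]
with
\begin{align*}
r_1(x,N,\eta) &= J_1(x,N+\eta)-J_1(x,N)-\eta\cdot J'_{1,\xi}(x,N), \\
r_2(Y,y,\xi) &= J_2(Y+y,\xi)-J_2(Y,\xi)-y\cdot J'_{2,x}(Y,\xi).
\end{align*}
From \eqref{hyp}, Theorem \ref{stime}, and the equivalences $\norm{Y}\asymp\norm{x}$, $\norm{N}\asymp\norm{\xi}$ on the support of $\chi$ (cf.\ Remark \ref{booh}), each $r_j$ is a second-order remainder whose derivatives in $y,\eta$ are governed by $\tau$. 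Likewise, the transported cutoff $\widetilde\chi(x,\xi,y,\eta)=\chi(x,Y+y,N+\eta,\xi)$, thanks to Lemma \ref{lem:chisym} and the SG-estimates on $Y,N$ from Theorem \ref{stime}, is supported where $|y|\lesssim\norm{x}$ and $|\eta|\lesssim\norm{\xi}$, and satisfies analogous estimates with weights $\norm{x}^{-|\alpha|}\norm{\xi}^{-|\beta|}$ on its $(y,\eta)$-derivatives. The identity becomes
\[
p_0(x,\xi) \;=\; \iint e^{-iy\cdot\eta}\, e^{i(r_1+r_2)(x,\xi,y,\eta)}\,\widetilde\chi(x,\xi,y,\eta)\,dy\,\dcut\eta.
\]

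Second, I would extract decay in $y$ and $\eta$ through integration by parts using the standard operators built on $e^{-iy\cdot\eta}$, namely $(1+|D_\eta|^2)^N\<y\>^{-2N}e^{-iy\cdot\eta}=e^{-iy\cdot\eta}$ (and the dual one in $y$). The delicate factor is $e^{i(r_1+r_2)}$, whose derivatives a priori grow. The crucial point is that, after the rescaling $y=\norm{x}\,u$, $\eta=\norm{\xi}\,v$, the compound phase $r_1+r_2$ and all its derivatives are bounded uniformly in $(x,\xi)$ by constants multiple of $\tau$ (this is a direct consequence of the boundedness of $J_j/\tau_j$ in $S^{1,1}$, the Fa\`a di Bruno formula and the scale equivalences on the support of $\widetilde\chi$). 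Consequently each integration by parts does gain decay in $\<y\>^{-1}$ or $\<\eta\>^{-1}$ at the cost of an absolutely integrable perturbation, and the resulting integral converges.

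Third, to obtain the full symbol estimate $|D_\xi^\alpha D_x^\beta p_0(x,\xi)|\lesssim \norm{x}^{-|\beta|}\norm{\xi}^{-|\alpha|}$, I would differentiate under the integral sign. Each derivative in $x$ or $\xi$ either falls on $\widetilde\chi$, on $Y,N$, on $r_1,r_2$, or brings down a factor $\partial_x\varphi_0$ or $\partial_\xi\varphi_0$. By Theorem \ref{stime} (applied with $M=1$) each such derivative contributes a factor $\norm{x}^{-1}$ or $\norm{\xi}^{-1}$ respectively, while the resulting amplitudes retain the structural estimates needed to repeat the integration-by-parts argument above. Iterating, one obtains the desired bound for every $(\alpha,\beta)$. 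The parametric statement in Theorem \ref{thm:mainbis} follows since all the constants produced depend only on a finite number of seminorms of $J_1,J_2$ in $S^{1,1}$.

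The main obstacle I anticipate is the combinatorial bookkeeping required to justify that, after the change of variables and rescaling, the Fa\`a di Bruno chain for the derivatives of $e^{i(r_1+r_2)}$ and of $\widetilde\chi$ really leaves us with SG-amplitudes whose weights match $\norm{x}^{-|\beta|}\norm{\xi}^{-|\alpha|}$; this is where the smallness of $\tau$, the explicit bounds \eqref{stimaY}--\eqref{stimaN} from Theorem \ref{stime}, and the boundedness of $\{J_j/\tau_j\}$ in $S^{1,1}$ all need to be used simultaneously.
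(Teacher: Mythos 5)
There is a genuine gap, and it sits precisely in the two spots that make this proof delicate. You change variables by $x'=Y+y$, $\xi'=N+\eta$ (unit Jacobian), and then propose to integrate by parts against $e^{-iy\cdot\eta}$ while treating $e^{i(r_1+r_2)}$ as part of the amplitude. But each $\eta$-derivative landing on $e^{ir_1}$ produces $i\,\partial_\eta r_1 = i\,[J'_{1,\xi}(x,N+\eta)-J'_{1,\xi}(x,N)]$, which on $\supp\widetilde\chi$ is of size $\lesssim\tau\norm{x}\norm{\xi}^{-1}|\eta|\lesssim\tau\norm{x}$, not $O(1)$. After $2N$ applications you pick up a factor $\norm{x}^{2N}$ that is multiplied by $\<y\>^{-2N}$; since on the support you only know $|y|\lesssim\norm{x}$, you cannot absorb it. Your fix -- rescale $y=\norm{x}u$, $\eta=\norm{\xi}v$ -- also does not give what you claim: with $|J''_{1\xi\xi}|\lesssim\tau\norm{x}\norm{\xi}^{-1}$, the rescaled remainder satisfies $|r_1|\lesssim\tau\norm{x}\norm{\xi}|v|^2$, so $r_1+r_2$ and its derivatives are emphatically \emph{not} ``bounded uniformly in $(x,\xi)$ by constants multiple of $\tau$''; they grow like $\norm{x}\norm{\xi}$, which is the same order as the main phase $\norm{x}\norm{\xi}\,u\cdot v$.

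What the paper does differently is to build the balancing into the change of variables from the start, setting $x'=Y(x,\xi)+y\,\omega(x,\xi)^{-1}$, $\xi'=N(x,\xi)+\eta\,\omega(x,\xi)$ with $\omega=\norm{x}^{-1/2}\norm{\xi}^{1/2}$. After this, both $|y|$ and $|\eta|$ are bounded by the same quantity $\tilde k(\norm{x}\norm{\xi})^{1/2}$ on $\supp\rho$, and the Taylor remainders take the quadratic form $[A_1\eta]\cdot\eta+[A_2y]\cdot y$ with $\|A_1\|,\|A_2\|\lesssim\tau$ \emph{uniformly} (Lemmas \ref{lem:ab}--\ref{lem:abest}). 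Hence $\varphi'_y=-\eta+B_2y$ and $\varphi'_\eta=-y+B_1\eta$ are genuine $O(\tau)$-perturbations, $|\varphi'_y|+|\varphi'_\eta|\asymp|y|+|\eta|$ (Lemma \ref{lem:diffeq}), and the regularizing operator $M=\Gamma^{-1}(1-i\varphi'_y\cdot\nabla_y-i\varphi'_\eta\cdot\nabla_\eta)$ gains a uniform power of $\norm{y,\eta}^{-1}$ per application (Lemma \ref{lem:tL}), while the extra $\omega$-weights propagate exactly the $\norm{x}^{-|\beta|}\norm{\xi}^{-|\alpha|}$ decay through the $x,\xi$-derivatives (Lemmas \ref{lem:phiprime_xxi}, \ref{lem:rho}). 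Without introducing this $\omega$-weighting (or an equivalent device, such as explicitly extracting a large stationary-phase parameter $\lambda=\norm{x}\norm{\xi}$ and tracking its powers), your scheme cannot close.
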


To prove Proposition \ref{prop:p0}, we will use the change of variables
\begin{equation}\label{eq:chvar}
	\begin{cases}
		x^\prime=Y(x,\xi)+y\cdot\omega(x,\xi)^{-1}
		\\
		\xi^\prime=N(x,\xi)+\eta\cdot\omega(x,\xi),
	\end{cases}
\end{equation}
where $\omega(x,\xi)=\norm{x}^{-\frac{1}{2}}\norm{\xi}^\frac{1}{2}\in S^{-\frac{1}{2},\frac{1}{2}}$ and
$(Y, N)=(Y(x,\xi),N(x,\xi))$ is the unique solution of
\begin{equation*}
	\begin{cases}
		Y(x,\xi)=\varphi^\prime_{1\xi}(x,N(x,\xi))
		\\
		N(x,\xi)=\varphi^\prime_{2x}(Y(x,\xi),\xi),
	\end{cases}
\end{equation*}
see \eqref{(C1)} of  Section \ref{sec:mpsgphf} above. With $\chi$ as in Definition \ref{def:dblcutoff}, let
\begin{align*}
	\rho(y,\eta;x,\xi)&
	=\chi(x, Y(x,\xi)+y\cdot\omega(x,\xi)^{-1}, N(x,\xi)+\eta\cdot\omega(x,\xi), \xi),
	\\
	\varphi(y,\eta;x,\xi)&
	=\varphi_0(x, Y(x,\xi)+y\cdot\omega(x,\xi)^{-1}, N(x,\xi)+\eta\cdot\omega(x,\xi), \xi),
\end{align*}
so that
\begin{equation*}
	p_0(x,\xi)=\iint e^{\varphi(y,\eta;x,\xi)}\rho(y,\eta;x,\xi)\,dy\dcut\eta.
\end{equation*}
By construction, on $\supp\,\rho$, 
\begin{equation*}
	|Y(x,\xi)+y\cdot\omega(x,\xi)^{-1}-x|\le\frac{2}{3a}\norm{x},	
	\quad
	|N(x,\xi)+\eta\cdot\omega(x,\xi)-\xi|\le\frac{2}{3a}\norm{\xi},
\end{equation*}
which implies that, for a sufficiently large $a>0$ and a suitable $\tilde{k}\in(0,1)$, on $\supp\,\rho$ we also have by \eqref{simplificationH} and \eqref{zzeta}
\begin{equation*}
	|y|\cdot\omega(x,\xi)^{-1}\le\tilde{k}\norm{x}\text{\; and\; } |\eta|\cdot\omega(x,\xi)\le\tilde{k}\norm{\xi}
	\;\Rightarrow\; |y|,|\eta|\le \tilde{k}\,(\norm{x}\norm{\xi})^\frac{1}{2}.
\end{equation*}
Furthermore, recalling that $\norm{\varphi^\prime_{1\xi}(x,\xi)}\asymp\norm{x}$ and 
$\norm{\varphi^\prime_{2x}(x,\xi)}\asymp\norm{\xi}$, we find that, on $\supp\,\rho$, for any $\theta\in[0,1]$,
\begin{equation}\label{eq:equiv}
		\norm{Y(x,\xi)+\theta\cdot y\cdot\omega(x,\xi)^{-1}}\asymp\norm{x},
		\quad
		\norm{N(x,\xi)+\theta\cdot\eta\cdot\omega(x,\xi)}\asymp\norm{\xi}.
\end{equation}
The next Lemma \ref{lem:ab} can be proved analysing the Taylor expansions of $\varphi(y,\eta;x,\xi)$.
\begin{lemma}\label{lem:ab}
Let
\begin{align*}
	A_1(\eta;x,\xi)&=\omega(x,\xi)^2\int_0^1(1-\theta)
	J^{\prime\prime}_{1\xi\xi}(x,N(x,\xi)+\theta\cdot\eta\cdot\omega(x,\xi))\,d\theta,
	\\
	A_2(y;x,\xi)&=\omega(x,\xi)^{-2}\int_0^1(1-\theta)
	J^{\prime\prime}_{2xx}(Y(x,\xi)+\theta\cdot y\cdot\omega(x,\xi)^{-1},\xi)\,d\theta,
	\\
	B_1(\eta;x,\xi)&=\omega(x,\xi)^2\int_0^1
	J^{\prime\prime}_{1\xi\xi}(x,N(x,\xi)+\theta\cdot\eta\cdot\omega(x,\xi))\,d\theta,
	\\
	B_2(y;x,\xi)&=\omega(x,\xi)^{-2}\int_0^1
	J^{\prime\prime}_{1xx}(Y(x,\xi)+\theta\cdot y\cdot\omega(x,\xi)^{-1},\xi)\,d\theta.
\end{align*}
Then
\begin{align}\nonumber
	\varphi(y,\eta;x,\xi)=&-y\cdot\eta+(\varphi_1(x,N(x,\xi)+\eta\cdot\omega(x,\xi))-\varphi_1(x, N(x, \xi)))
	\\
	\nonumber
	&-\varphi^\prime_{1\xi}(x,N(x,\xi))\cdot\eta\cdot\omega(x,\xi)	\\
	\nonumber
	&+(\varphi_2(Y(x,\xi)+y\cdot\omega(x,\xi)^{-1},\xi)-\varphi_2(Y(x,\xi),\xi))
	\\
	\label{biss}
	&-\varphi^\prime_{2x}(Y(x,\xi),\xi))\cdot y\cdot\omega(x,\xi)^{-1}
	\\
	\nonumber
	=&-y\cdot\eta+[A_1(\eta;x,\xi)\eta]\cdot\eta+[A_2(y;x,\xi)y]\cdot y,
\\
\nonumber
\\
\nonumber
	\varphi^\prime_y(y,\eta;x,\xi)=&-\eta
	+[\varphi^\prime_{2x}(Y(x,\xi)+y\cdot\omega(x,\xi)^{-1},\xi)-\varphi^\prime_{2x}(Y(x,\xi),\xi)]\cdot\omega(x,\xi)^{-1}
	\\
	\label{biss2}
	=&-\eta+B_2(y;x,\xi)y,
\\
\nonumber
\\
\nonumber
	\varphi^\prime_\eta(y,\eta;x,\xi)=&-y
	+[\varphi^\prime_{1\xi}(x,N(x,\xi)+\eta\cdot\omega(x,\xi))-\varphi^\prime_{1\xi}(x,N(x,\xi))]\cdot\omega(x,\xi)
	\\
	\label{biss3}
	=&-y+B_1(\eta;x,\xi)\eta.
\end{align}
\end{lemma}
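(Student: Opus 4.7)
The plan is to unpack the definition of $\varphi(y,\eta;x,\xi)$ via $\varphi_0$, use the critical-point relations \eqref{(C1)} for $(Y,N)$ to cancel the linear terms in $(y,\eta)$, and then apply Taylor's formula with integral remainder to recognize the coefficients $A_1,A_2,B_1,B_2$.

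First, by the definition of $\varphi_0$ in \eqref{fizero} and of $\varphi(y,\eta;x,\xi)$, substituting $x'=Y+y\omega^{-1}$ and $\xi'=N+\eta\omega$ yields
\begin{align*}
\varphi(y,\eta;x,\xi)
&=\varphi_1(x,N+\eta\omega)-(Y+y\omega^{-1})\cdot(N+\eta\omega)+\varphi_2(Y+y\omega^{-1},\xi)-\phi(x,\xi).
\end{align*}
Next I would use that, by Definition \ref{mprod} applied to the two-factor case (and the form of $\psi$ in \eqref{serveilnome} for $M=1$), one has
\[
\phi(x,\xi)=\varphi_1(x,N(x,\xi))-Y(x,\xi)\cdot N(x,\xi)+\varphi_2(Y(x,\xi),\xi),
\]
so after expanding the scalar product $(Y+y\omega^{-1})\cdot(N+\eta\omega)$ the ``constant'' terms combine to give
\begin{align*}
\varphi(y,\eta;x,\xi)
&=[\varphi_1(x,N+\eta\omega)-\varphi_1(x,N)]+[\varphi_2(Y+y\omega^{-1},\xi)-\varphi_2(Y,\xi)]\\
&\quad-Y\cdot\eta\omega-N\cdot y\omega^{-1}-y\cdot\eta.
\end{align*}
The critical-point equations \eqref{(C1)} then give $Y=\varphi'_{1,\xi}(x,N)$ and $N=\varphi'_{2,x}(Y,\xi)$, so the two mixed terms become $\varphi'_{1,\xi}(x,N)\cdot\eta\omega$ and $\varphi'_{2,x}(Y,\xi)\cdot y\omega^{-1}$. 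This produces exactly the first equality in \eqref{biss}.

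To obtain the quadratic-form expression in \eqref{biss}, I would apply the second-order Taylor formula with integral remainder to each bracket. Since $\varphi_j=J_j+x\cdot\xi$, the second derivatives of $\varphi_j$ coincide with those of $J_j$, and
\[
\varphi_1(x,N+\eta\omega)-\varphi_1(x,N)-\varphi'_{1,\xi}(x,N)\cdot\eta\omega
=\Bigl(\!\int_0^1\!(1-\theta)J''_{1,\xi\xi}(x,N+\theta\eta\omega)\,d\theta\,\eta\omega\Bigr)\cdot\eta\omega,
\]
which is exactly $[A_1(\eta;x,\xi)\eta]\cdot\eta$; the analogous computation for the second bracket yields $[A_2(y;x,\xi)y]\cdot y$.

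For the derivative formulas \eqref{biss2} and \eqref{biss3}, I would differentiate the first (non-quadratic-form) representation of $\varphi$ directly in $y$ and $\eta$. The $\varphi'_{j,\xi}$/$\varphi'_{j,x}$ coming from the cancellation with the linear terms produce differences of the form
\[
\varphi'_{2,x}(Y+y\omega^{-1},\xi)-\varphi'_{2,x}(Y,\xi)
=\Bigl(\!\int_0^1\!J''_{2,xx}(Y+\theta y\omega^{-1},\xi)\,d\theta\Bigr)y\omega^{-1},
\]
and analogously for the $\eta$-derivative, which after multiplication by $\omega^{-1}$ (resp.\ $\omega$) match $B_2(y;x,\xi)y$ (resp.\ $B_1(\eta;x,\xi)\eta$), with the understanding that the $J''_{1,xx}$ occurring in the displayed definition of $B_2$ is a typo for $J''_{2,xx}$ (as dictated by the Taylor expansion of $\varphi_2$).

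No step here is really hard; the main point requiring care is keeping track of the $\omega^{\pm 1}$ scalings introduced by the change of variables \eqref{eq:chvar} so that the quadratic forms $A_j$ and the first-order kernels $B_j$ carry the correct order in $(x,\xi)$. The equivalences \eqref{eq:equiv} on $\supp\rho$ then ensure that these are genuine $S^{0,0}$-type objects once $a$ is taken large enough, which is what is needed for the subsequent proof of Proposition \ref{prop:p0}.
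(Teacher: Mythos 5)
Your proof is correct and follows essentially the same route as the paper's: expand $\varphi_0$ using \eqref{fizero} and \eqref{serveilnome}, cancel the linear terms via the critical-point relations \eqref{(C1)}, and recognize $A_1,A_2,B_1,B_2$ through Taylor's formula with integral remainder. You also correctly spot that the $J''_{1xx}$ in the paper's displayed definition of $B_2$ is a typo for $J''_{2xx}$, as the Taylor expansion of $\varphi'_{2x}$ dictates.
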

\begin{proof}\label{rem:phi0}
	By the definition \eqref{fizero} of $\varphi_0$ and of the multi-product of phase functions \eqref{serveilnome} and \eqref{3.4}, recalling \eqref{eq:chvar}, we can write
	\begin{align*}
		\varphi_0(x,x^\prime,\xi^\prime,\xi)&=\varphi_1(x,\xi^\prime)-x^\prime\cdot\xi^\prime+\varphi_2(x^\prime,\xi)
		\\
		&-\varphi_1(x,N(x,\xi))+Y(x,\xi)\cdot N(x,\xi)-\varphi_2(Y(x,\xi),\xi),
	\end{align*}
	which implies
	\begin{align*}
		\varphi_0(x,&Y(x,\xi)+y\cdot\omega(x,\xi)^{-1},N(x,\xi)+\eta\cdot\omega(x,\xi),\xi)
		\\
		=& \varphi_1(x,N(x,\xi)+\eta\cdot\omega(x,\xi))-
		(Y(x,\xi)+y\cdot\omega(x,\xi)^{-1})\cdot(N(x,\xi)+\eta\cdot\omega(x,\xi))
		\\
		+&\varphi_2(Y(x,\xi)+y\cdot\omega(x,\xi)^{-1},\xi)-\varphi_1(x,N(x,\xi))-\varphi_2(Y(x,\xi),\xi)
		+Y(x,\xi)\cdot N(x,\xi)
		\\
		=&-y\cdot\eta+
		(\varphi_1(x,N(x,\xi)+\eta\cdot\omega(x,\xi))-\varphi_1(x, N(x, \xi)))-Y(x,\xi)\cdot\eta\cdot\omega(x,\xi)
		\\
		&+(\varphi_2(Y(x,\xi)+y\cdot\omega(x,\xi)^{-1},\xi)-\varphi_2(Y(x,\xi),\xi))-y\cdot N(x,\xi)\cdot\omega(x,\xi)^{-1}.
	\end{align*}
	Then, recalling that $Y(x,\xi)=\varphi^\prime_{1\xi}(x,N(x,\xi))$ and $N(x,\xi)=\varphi^\prime_{2x}(Y(x,\xi),\xi)$, we get
\begin{align*}
	\varphi(y,\eta;x,\xi)=&-y\cdot\eta+(\varphi_1(x,N(x,\xi)+\eta\cdot\omega(x,\xi))-\varphi_1(x, N(x, \xi)))
	\\
	&-Y(x,\xi)\cdot\eta\cdot\omega(x,\xi)+(\varphi_2(Y(x,\xi)+y\cdot\omega(x,\xi)^{-1},\xi)-\varphi_2(Y(x,\xi),\xi))
	\\
	&-
	y\cdot N(x,\xi)\cdot\omega(x,\xi)^{-1}
	\\
	=&-y\cdot\eta+(\varphi_1(x,N(x,\xi)+\eta\cdot\omega(x,\xi))-\varphi_1(x, N(x, \xi)))
	\\
	&-\varphi^\prime_{1\xi}(x,N(x,\xi))\cdot\eta\cdot\omega(x,\xi)
	\\
	&+(\varphi_2(Y(x,\xi)+y\cdot\omega(x,\xi)^{-1},\xi)-\varphi_2(Y(x,\xi),\xi))
	\\
	&-\varphi^\prime_{2x}(Y(x,\xi),\xi))\cdot y\cdot\omega(x,\xi)^{-1}
	\\
	=&-y\cdot\eta+[A_1(\eta;x,\xi)\eta]\cdot\eta+[A_2(y;x,\xi)y]\cdot y,
\end{align*}
	that is \eqref{biss} and its subsequent expression in terms of $A_1, A_2$. Then \eqref{biss2} and \eqref{biss3} immediately follow taking derivatives with respect to $y,\eta$ in \eqref{biss}, and then looking at the definitions of $B_1$, $B_2$.
\end{proof}

\begin{lemma}\label{lem:abest}
	For $A_1,A_2,B_1,B_2$ defined in Lemma \ref{lem:ab} we have, for all $x,y,\xi,\eta\in\R^n$ in $\supp\,\rho$,
	\begin{align*}
		\|\partial_x^\beta\partial_\xi^\alpha\partial_\eta^{\alpha^\prime}(A_1,B_1)(\eta;x,\xi)\|
		&\lesssim
		\tau\norm{\xi}^{-|\alpha|-\frac{|\alpha^\prime|}{2}}\norm{x}^{-|\beta|-\frac{|\alpha^\prime|}{2}}
		\norm{y,\eta}^{|\alpha+\beta|},
	\\
		\|\partial_x^\beta\partial_\xi^\alpha\partial_y^{\beta^\prime}(A_2,B_2)(y;x,\xi)\|
		&\lesssim
		\tau\norm{\xi}^{-|\alpha|-\frac{|\beta^\prime|}{2}}\norm{x}^{-|\beta|-\frac{|\beta^\prime|}{2}}
		\norm{y,\eta}^{|\alpha+\beta|},
	\end{align*}
where $\langle y,\eta\rangle:=\sqrt{1+|y|^2+|\eta|^2},$ $y,\eta\in\R^n$.	
\end{lemma}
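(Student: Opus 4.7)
My plan: since each of $A_1, B_1, A_2, B_2$ is an integral of a second-order derivative of $J_j$ evaluated at a shifted point, multiplied by $\omega^{\pm 2}$, I would differentiate directly under the integral sign and bound the resulting sum of terms using three ingredients: (i) the seminorm estimates $|D_\xi^{\gamma}D_x^{\delta}J_j(x,\xi)|\lesssim \tau \langle x\rangle^{1-|\delta|}\langle\xi\rangle^{1-|\gamma|}$ for $|\gamma+\delta|\le 2$ coming from $\fy_j\in\Phr(\tau_j)$, together with the higher-order bounds $\|J_j\|_{2,\ell}\le c_\ell\tau$; (ii) Theorem \ref{stime} specialized to $M=1$, which controls $Y(x,\xi)-x$ in $S^{1,0}$ and $N(x,\xi)-\xi$ in $S^{0,1}$ with constants proportional to $\tau$; (iii) the fact that $\omega\in S^{-1/2,1/2}$ and $\omega^{-1}\in S^{1/2,-1/2}$. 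A preliminary observation is that on $\supp\rho$ the equivalences \eqref{eq:equiv} allow me to replace $\langle N+\theta\eta\omega\rangle$ by $\langle\xi\rangle$, and $\langle Y+\theta y\omega^{-1}\rangle$ by $\langle x\rangle$, uniformly in $\theta\in[0,1]$, so that all the seminorm bounds for $J_j$ apply with $\theta$-independent weights.

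The core computation is the Fa\`a di Bruno expansion of $\partial_x^\beta\partial_\xi^\alpha\partial_\eta^{\alpha'}$ applied to $\omega(x,\xi)^2\, J''_{1\xi\xi}(x,N(x,\xi)+\theta\eta\omega(x,\xi))$ and its analogue for $A_2,B_2$. Each resulting term is a product of: a derivative of $\omega^{\pm 2}$ (each $\partial_\xi$ contributing $\langle\xi\rangle^{-1}$ and each $\partial_x$ contributing $\langle x\rangle^{-1}$); a product of derivatives of the inner arguments $N(x,\xi)$ and $\theta\eta\omega(x,\xi)$ (resp.\ $Y$ and $\theta y\omega^{-1}$); and one higher-order derivative $D_x^\delta D_\xi^\gamma J_1$, whose size is $\tau\langle x\rangle^{1-|\delta|}\langle\xi\rangle^{1-|\gamma|}$ with $\gamma$ and $\delta$ determined by the chain-rule branch. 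The crucial bookkeeping is then: every $\partial_\eta$ falling on the argument of $J''_{1\xi\xi}$ brings down one factor of $\omega\sim\langle x\rangle^{-1/2}\langle\xi\rangle^{1/2}$ and raises the $\xi$-order of $J_1$ by one; the product of $\omega$ with the extra $\langle\xi\rangle^{-1}$ from $J_1$ gives exactly the gain $\langle x\rangle^{-1/2}\langle\xi\rangle^{-1/2}$ per $\partial_\eta$-derivative, which iterates to the announced $\langle x\rangle^{-|\alpha'|/2}\langle\xi\rangle^{-|\alpha'|/2}$.

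The source of the $\langle y,\eta\rangle^{|\alpha+\beta|}$ factor is different and more delicate: each $\partial_\xi$ (resp.\ $\partial_x$) falling on the factor $\theta\eta\omega$ (resp.\ $\theta y\omega^{-1}$) inside the argument of $J_{1\xi\xi}''$ (resp.\ $J_{2xx}''$) produces a factor of $|\eta|$ (resp.\ $|y|$) multiplied by a derivative of $\omega^{\pm 1}$, and these are the \emph{only} sources of powers of $|\eta|$ or $|y|$; since at most $|\alpha+\beta|$ such derivatives can be routed to this branch, each contributing at most one power of $|\eta|$ or $|y|\le\langle y,\eta\rangle$, the total factor is controlled by $\langle y,\eta\rangle^{|\alpha+\beta|}$, while the accompanying derivatives of $\omega^{\pm1}$ produce exactly the remaining $\langle\xi\rangle^{-|\alpha|}\langle x\rangle^{-|\beta|}$ weight. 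The argument for $A_2,B_2$ is obtained by swapping $(x,\xi)\leftrightarrow(\xi,x)$ and $\omega\leftrightarrow\omega^{-1}$; the $B_j$ case is identical to the $A_j$ case because the factor $(1-\theta)$ played no role in any estimate. The main obstacle is the combinatorial bookkeeping of ensuring that the worst term produced by Fa\`a di Bruno, after balancing all powers of $\omega$, $\omega^{-1}$, $|\eta|$, $|y|$ and the $J_j$-weights, still fits the target bound; the key identity enabling this balance is the elementary relation $\omega^2\cdot\langle x\rangle\langle\xi\rangle^{-1}=1$, which shows that the base case $\alpha=\beta=\alpha'=0$ already gives $|A_1|\lesssim\tau$, after which every additional derivative is absorbed as described above.
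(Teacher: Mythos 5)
Your proposal follows essentially the same route as the paper's proof: both rely on differentiating under the integral sign, the Fa\`a di Bruno expansion, the seminorm control $\|J_j\|_{2,\ell}\lesssim\tau$, the membership $Y\in S^{1,0}$, $N\in S^{0,1}$, the equivalences \eqref{eq:equiv} valid on $\supp\,\rho$, and the observation that $N+\theta\eta\omega$ (resp.\ $Y+\theta y\omega^{-1}$) behaves like an $S^{0,1}$ (resp.\ $S^{1,0}$) symbol up to a factor $\norm{y,\eta}$, with the base case reduced to the identity $\omega^2\langle x\rangle\langle\xi\rangle^{-1}=1$. The only cosmetic difference is that the paper organizes the bound as an induction on the derivative order while you unroll Fa\`a di Bruno directly, and your attribution of the full $\langle\xi\rangle^{-|\alpha|}\langle x\rangle^{-|\beta|}$ decay solely to derivatives of $\omega^{\pm1}$ is slightly imprecise (it results jointly from the gain in decay of the higher $J$-derivatives, the $\omega$-derivatives, and the contributions $\partial_\xi^\gamma N,\ \partial_x^\gamma Y$), but the balancing of powers you describe is correct and leads to the stated estimates.
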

\begin{proof}
	The result follows from the Fa\'a di Bruno formula for the derivatives of the composed functions,
	the properties of $X\in S^{1,0}$, $ N\in S^{0,1}$ stated above,
	the fact that, on $\supp\,\rho$, \eqref{eq:equiv} holds for any $\theta\in[0,1]$, as well as
	\[
		Y(x,\xi)+\theta\cdot y\cdot\omega(x,\xi)^{-1}\in S^{1,0}\cdot\norm{y,\eta},
		\quad
		N(x,\xi)+\theta\cdot\eta\cdot\omega(x,\xi)\in S^{0,1}\cdot\norm{y,\eta},
	\]
	recalling that the seminorms of $J_1$ and $J_2$ involving their derivatives up to order 2
	are proportional to $\tau\in(0,1)$. 
	
	The proof works by induction on the order of the derivatives. Let us give an idea of the step $|\alpha+\beta+\alpha'|=1$. Let $e_j$ be the multiindex such that $|e_j|=1$, with components $0$ everywhere apart from the $j$-th. Then, for instance, on $\supp\,\rho$,
	\begin{align*}
		\partial^{e_j}_xB_1(\eta;x,\xi)&= (\partial^{e_j}_x\omega^2)
		\int_0^1 J^{\prime\prime}_{1\xi\xi}(\dots)\,d\theta
		+ \omega^2\int_0^1 J^{\prime\prime\prime}_{1x\xi\xi}(\dots)\,d\theta
		\\&+\omega^2\int_0^1 J^{\prime\prime\prime}_{1\xi\xi\xi}(\dots)\,d\theta
		\cdot\partial_x^{e_j}(N(x,\xi)+\theta\cdot\eta\cdot\omega(x,\xi))
		\\
		&\in S^{-1,0}+S^{-1,0}\cdot\norm{y,\eta}\subset S^{-1,0}\cdot\norm{y,\eta},
	\end{align*}
	since $\omega^2\in S^{-1,1}$, $\int_0^1 J^{\prime\prime}_{1\xi\xi}(\dots)\,d\theta\in S^{1,-1}$, $\int_0^1 J^{\prime\prime\prime}_{1x\xi\xi}(\dots)\,d\theta\in S^{0,-1}$, $\int_0^1 J^{\prime\prime\prime}_{1\xi\xi\xi}(\dots)\,d\theta\in S^{1,-2}$, and $N(x,\xi)+\theta\cdot\eta\cdot\omega(x,\xi)\in S^{0,1}|\eta|$.
	Similarly,
	\begin{align*}
		\partial^{e_j}_\xi B_1(\eta;x,\xi)&=(\partial^{e_j}_\xi\omega^2)
		\int_0^1 J^{\prime\prime}_{1\xi\xi}(\dots)\,d\theta
		\\&+\omega^2\int_0^1 J^{\prime\prime\prime}_{1\xi\xi\xi}(\dots)\,d\theta
		\cdot\partial_\xi^{e_j}(N(x,\xi)+\theta\cdot\eta\cdot\omega(x,\xi))
		\\
		&\in S^{0,-1}+S^{0,-1}\cdot\norm{y,\eta}\subset S^{0,-1}\cdot\norm{y,\eta},
		\\
		\partial^{e_j}_\eta B_1(\eta;x,\xi)&=\omega^2\int_0^1 J^{\prime\prime\prime}_{1\xi\xi\xi}(\dots)\,d\theta\cdot(\theta\cdot\omega(x,\xi)) \in S^{-1/2,-1/2}.
	\end{align*}
	The estimates for general multiindeces follow by induction.
\end{proof}
\begin{lemma}\label{lem:diffeq}
	On $\supp\,\rho$,
	\[
		|\varphi^\prime_y(y,\eta;x,\xi)|+|\varphi^\prime_\eta(y,\eta;x,\xi)|\asymp |y|+|\eta|.
	\]
\end{lemma}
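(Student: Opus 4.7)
The plan is to read off both sides of the equivalence directly from the identities in Lemma \ref{lem:ab}, using the bounds for $B_1,B_2$ supplied by Lemma \ref{lem:abest}. Namely, by \eqref{biss2} and \eqref{biss3}, on $\supp\,\rho$ we have
\[
\varphi^\prime_y(y,\eta;x,\xi)=-\eta+B_2(y;x,\xi)\,y,\qquad
\varphi^\prime_\eta(y,\eta;x,\xi)=-y+B_1(\eta;x,\xi)\,\eta,
\]
so the desired equivalence is a one-line consequence once we control the matrix norms $\|B_1\|,\|B_2\|$ by a small multiple of $\tau$.

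First I would apply Lemma \ref{lem:abest} with $\alpha=\beta=\alpha'=\beta'=0$, which gives $\|B_1(\eta;x,\xi)\|\lesssim\tau$ and $\|B_2(y;x,\xi)\|\lesssim\tau$ on $\supp\,\rho$, for a constant $C>0$ independent of $(x,\xi,y,\eta)$. From this, the triangle inequality applied to the two identities above yields the two-sided estimate
\[
(1-C\tau)(|y|+|\eta|)\le |\varphi^\prime_y(y,\eta;x,\xi)|+|\varphi^\prime_\eta(y,\eta;x,\xi)|\le (1+C\tau)(|y|+|\eta|).
\]
Since we are working under the standing assumption that $\tau\in(0,1/4)$ is sufficiently small (chosen from Theorem \ref{thm:mainbis}), we may assume $C\tau<1$, so $1-C\tau>0$ and the lower bound is nontrivial. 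This gives $|\varphi^\prime_y|+|\varphi^\prime_\eta|\asymp |y|+|\eta|$ on $\supp\,\rho$, as required.

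There is essentially no obstacle here; the only point to verify carefully is that the constant implicit in Lemma \ref{lem:abest} does not depend on $(y,\eta)$ at the zeroth order (it does not, since the factor $\norm{y,\eta}^{|\alpha+\beta|}$ is $1$ when $|\alpha+\beta|=0$), and that the smallness of $\tau$ needed here is compatible with the smallness required earlier in Section \ref{sec:sgfioprod}. Both conditions are met, so the argument is immediate and the proof reduces to the two-line chain of estimates above.
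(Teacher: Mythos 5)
Your proof is correct and follows essentially the same route as the paper: both use the explicit identities for $\varphi'_y$ and $\varphi'_\eta$ from Lemma \ref{lem:ab}, the zeroth-order bounds $\|B_1\|,\|B_2\|\lesssim\tau$ from Lemma \ref{lem:abest}, and the triangle inequality on $\supp\rho$ to obtain the two-sided estimate with constants $1\pm C\tau$, then invoke smallness of $\tau$ to make the lower constant positive. Your remark about tracking the implied constant and its compatibility with the earlier smallness assumption is a sensible clarification of the same argument.
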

\begin{proof}
	From Lemmas \ref{lem:ab} and \ref{lem:abest}, on $\supp\,\rho$, for $\tau\in(0,1)$,
	\begin{align*}
		\|B_1(\eta;x,\xi)\|\lesssim\tau &\Rightarrow \|B_1(\eta;x,\xi)\eta\|\lesssim\tau|\eta|,
		\\
		\|B_2(y;x,\xi)\|\lesssim\tau &\Rightarrow \|B_2(y;x,\xi)y\|\lesssim\tau|y|,
	\end{align*}
	which imply
	\begin{align*}
		|\varphi^\prime_y(y,\eta;x,\xi)|\lesssim|\eta|+\tau|y|, \quad & |\varphi^\prime_y(y,\eta;x,\xi)|\gtrsim|\eta|-\tau|y|,
		\\
		|\varphi^\prime_\eta(y,\eta;x,\xi)|\lesssim|y|+\tau|\eta|, \quad & |\varphi^\prime_\eta(y,\eta;x,\xi)|\gtrsim|y|-\tau|\eta|.
	\end{align*}
	These give
	\begin{align*}
		|\varphi^\prime_y(y,\eta;x,\xi)|+|\varphi^\prime_\eta(y,\eta;x,\xi)|&\lesssim(1+\tau)(|y|+|\eta|),
		\\
		|\varphi^\prime_y(y,\eta;x,\xi)|+|\varphi^\prime_\eta(y,\eta;x,\xi)|&\gtrsim(1-\tau)(|y|+|\eta|),
	\end{align*}
	as claimed.
\end{proof}
\begin{lemma}\label{lem:phiprimeest}
	On $\supp\,\rho$, for any multiindeces $\alpha,\beta,\alpha^\prime,\beta^\prime$, and all $x,y,\xi,\eta$,
	\begin{align*}
		|\partial_x^\beta\partial_y^{\beta^\prime}\partial_\xi^\alpha\partial_\eta^{\alpha^\prime}\varphi^\prime_y(y,\eta;x,\xi)|
		&\lesssim
		\begin{cases}
			0 & \text{if $|\alpha^\prime|\ge2$},
			\\
			1 & \text{if $|\alpha^\prime|=1$},
			\\
			\tau
			\norm{x}^{-|\beta|-\frac{|\beta^\prime|}{2}}  
			\norm{\xi}^{-|\alpha|-\frac{|\beta^\prime|}{2}}
			\norm{y,\eta}^{1+|\alpha+\beta|}
			& \text{if $|\alpha^\prime|=0$,}
			\\
			&\text{\phantom{if} $|\alpha+\beta+\beta^\prime|>0$};
		\end{cases}
		\\
		|\partial_x^\beta\partial_y^{\beta^\prime}\partial_\xi^\alpha\partial_\eta^{\alpha^\prime}
		\varphi^\prime_\eta(y,\eta;x,\xi)|
		&\lesssim
		\begin{cases}
			0 & \text{if $|\beta^\prime|\ge2$},
			\\
			1 & \text{if $|\beta^\prime|=1$},
			\\
			\tau
			\norm{x}^{-|\beta|-\frac{|\alpha^\prime|}{2}}  
			\norm{\xi}^{-|\alpha|-\frac{|\alpha^\prime|}{2}}
			\norm{y,\eta}^{1+|\alpha+\beta|}
			& \text{if $|\beta^\prime|=0$,}
			\\
			& \text{\phantom{if} $|\alpha+\alpha^\prime+\beta|>0$}.
		\end{cases}
	\end{align*}
\end{lemma}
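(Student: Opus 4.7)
My plan is to exploit the explicit representation from Lemma~\ref{lem:ab}, namely
$\varphi^\prime_y(y,\eta;x,\xi)=-\eta+B_2(y;x,\xi)\,y$ and
$\varphi^\prime_\eta(y,\eta;x,\xi)=-y+B_1(\eta;x,\xi)\,\eta$.
The two inequalities in the statement are completely symmetric under the involution that swaps $(x,y,B_2,J_2)\leftrightarrow(\xi,\eta,B_1,J_1)$, so I would prove only the first one; the second then follows by repeating the argument with the roles of the $y$- and $\eta$-variables, and of $B_2$ and $B_1$, interchanged.

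The two degenerate cases for $\varphi^\prime_y$ are immediate. For $|\alpha^\prime|\ge 2$, the linear term $-\eta$ and the $\eta$-independent term $B_2y$ are both annihilated by $\partial_\eta^{\alpha^\prime}$, so the full mixed derivative is identically $0$. For $|\alpha^\prime|=1$, only $\partial_\eta^{\alpha^\prime}(-\eta)$ survives and it is a constant component of $-I$; any further derivative of a constant vanishes, so the bound $\lesssim 1$ is trivial.

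The main case is $|\alpha^\prime|=0$ with $|\alpha+\beta+\beta^\prime|>0$, where the linear term $-\eta$ is killed by any of the derivatives in $x$, $y$, or $\xi$. I am thus reduced to estimating $\partial_x^\beta\partial_y^{\beta^\prime}\partial_\xi^\alpha(B_2(y;x,\xi)\,y)$. I would expand this by the Leibniz rule in $y$; since $\partial_y^\delta y=0$ for $|\delta|\ge 2$, only two kinds of terms survive: the main contribution $(\partial_x^\beta\partial_y^{\beta^\prime}\partial_\xi^\alpha B_2)\,y$, and (when $|\beta^\prime|\ge 1$) the finitely many terms $(\partial_x^\beta\partial_y^{\beta^\prime-e_j}\partial_\xi^\alpha B_2)\,e_j$ arising when exactly one $y$-derivative hits the linear $y$-factor. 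Applying the sharp bounds from Lemma~\ref{lem:abest} to each derivative of $B_2$, using $|y|\le\norm{y,\eta}$ for the main term, and exploiting the support condition $|y|,|\eta|\le\tilde{k}(\norm{x}\,\norm{\xi})^{1/2}$ on $\supp\rho$ to convert any missing $\omega^{-1}=\norm{x}^{1/2}\norm{\xi}^{-1/2}$ factor into the correct combination of weights, should give the claimed estimate.

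The main technical obstacle I foresee is the combinatorial bookkeeping of the $\omega$-weights. Each $y$-derivative that reaches $B_2$ passes through the argument $Y(x,\xi)+\theta y\,\omega(x,\xi)^{-1}$ and, via the chain rule, produces one extra factor of $\omega^{-1}$; on the other hand each $y$-derivative landing on the outer $y$-factor consumes one unit of $y$-power but produces no $\omega^{-1}$. Carefully tracking how many derivatives fall on $B_2$ versus on $y$, and combining the resulting factors with the support bound on $(y,\eta)$ so as to recover precisely the exponents $-|\beta|-|\beta^\prime|/2$ and $-|\alpha|-|\beta^\prime|/2$ in $\norm{x},\norm{\xi}$ and $1+|\alpha+\beta|$ in $\norm{y,\eta}$ is the delicate step; an induction on $|\alpha+\beta+\beta^\prime|$ in the spirit of the proof of Theorem~\ref{stime} appears to be the cleanest way to organise it.
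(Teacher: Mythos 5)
Your approach is the same as the paper's: the paper dismisses the proof with the single sentence ``The results follow from Lemma~\ref{lem:abest} and the estimates \eqref{eq:equiv}'', and you flesh this out using the representations $\varphi'_y=-\eta+B_2 y$, $\varphi'_\eta=-y+B_1\eta$ from Lemma~\ref{lem:ab}, a Leibniz expansion in the $y$- (resp.\ $\eta$-)variable, and the bounds of Lemma~\ref{lem:abest}. Your treatment of the degenerate cases $|\alpha'|\ge1$ (resp.\ $|\beta'|\ge1$) is correct, and your bound for the main Leibniz term $(\partial_x^\beta\partial_y^{\beta'}\partial_\xi^\alpha B_2)\,y$ indeed recovers exactly the exponents in the statement.

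There is, however, a genuine gap in your handling of the secondary Leibniz terms, and you have not carried out ``the delicate step'' you yourself identify. When $|\beta'|\ge1$, the term $(\partial_x^\beta\partial_y^{\beta'-e_k}\partial_\xi^\alpha B_2)\,e_k$ is controlled by Lemma~\ref{lem:abest} only by
\[
\tau\,\norm{\xi}^{-|\alpha|-\frac{|\beta'|-1}{2}}\norm{x}^{-|\beta|-\frac{|\beta'|-1}{2}}\norm{y,\eta}^{|\alpha+\beta|},
\]
which exceeds the claimed bound $\tau\norm{\xi}^{-|\alpha|-\frac{|\beta'|}{2}}\norm{x}^{-|\beta|-\frac{|\beta'|}{2}}\norm{y,\eta}^{1+|\alpha+\beta|}$ by the factor $(\norm{x}\norm{\xi})^{1/2}\norm{y,\eta}^{-1}$. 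The support condition gives $\norm{y,\eta}\lesssim(\norm{x}\norm{\xi})^{1/2}$, i.e.\ $(\norm{x}\norm{\xi})^{1/2}\norm{y,\eta}^{-1}\gtrsim 1$: it goes in precisely the \emph{wrong} direction, so the plan of ``converting any missing $\omega^{-1}$ factor into the correct combination of weights'' cannot work. The simplest counterexample is $\alpha=\beta=0$, $\beta'=e_j$, $\alpha'=0$: the contribution $B_2 e_j$ is only $\lesssim\tau$, which is not dominated by $\tau\norm{x}^{-1/2}\norm{\xi}^{-1/2}\norm{y,\eta}$ near $y=\eta=0$. Thus the mixed $\norm{x}^{-|\beta'|/2}\norm{\xi}^{-|\beta'|/2}$ gain advertised in the statement is not achievable for these secondary terms, and your sketch silently assumes it is. (For completeness: the weakened bound $\tau\norm{x}^{-|\beta|}\norm{\xi}^{-|\alpha|}\norm{y,\eta}^{1+|\alpha+\beta|}$, without the $|\beta'|/2$ shifts, does follow from your computations and is all that is used downstream in Lemmas~\ref{lem:Gamma}--\ref{lem:tL} and Proposition~\ref{prop:p0}; but a correct proof must either prove that weaker statement or explicitly absorb the secondary terms, which your sketch does not do.)
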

\begin{proof}
	The results follow from Lemma \ref{lem:abest} and the estimates \eqref{eq:equiv}.
\end{proof}
\begin{lemma}\label{lem:phiprime_xxi}
	On $\supp\,\rho$, for any multiindeces $\alpha,\beta,\alpha^\prime,\beta^\prime$, and all $x,y,\xi,\eta$,
		\begin{align*}
		|\partial_x^\beta\partial_y^{\beta^\prime}\partial_\xi^\alpha\partial_\eta^{\alpha^\prime}\varphi^\prime_x(y,\eta;x,\xi)|
		&\lesssim
		\begin{cases}
			\tau
			\norm{\xi}^{-|\alpha|-\frac{|\beta^\prime|}{2}}
			\norm{x}^{-1-|\beta|-\frac{|\beta^\prime|}{2}}
			\norm{y,\eta}^{3+|\alpha+\beta|}
			 & \text{if $|\beta^\prime|>0$},
			\\
			\tau
			\norm{\xi}^{-|\alpha|-\frac{|\alpha^\prime|}{2}}
			\norm{x}^{-1-|\beta|-\frac{|\alpha^\prime|}{2}}
			\norm{y,\eta}^{3+|\alpha+\beta|}
			& \text{if $|\alpha^\prime|>0$},
			\\
			\tau
			\norm{\xi}^{-|\alpha|}
			\norm{x}^{-1-|\beta|}
			\norm{y,\eta}^{3+|\alpha+\beta|}
			& \text{if $\alpha^\prime=\beta^\prime=0$};
		\end{cases}
		\\
		|\partial_x^\beta\partial_y^{\beta^\prime}\partial_\xi^\alpha\partial_\eta^{\alpha^\prime}
		\varphi^\prime_\xi(y,\eta;x,\xi)|
		&\lesssim
		\begin{cases}
			\tau
			\norm{\xi}^{-1-|\alpha|-\frac{|\beta^\prime|}{2}}
			\norm{x}^{-|\beta|-\frac{|\beta^\prime|}{2}}
			\norm{y,\eta}^{3+|\alpha+\beta|}
			 & \text{if $|\beta^\prime|>0$},
			\\
			\tau
			\norm{\xi}^{-1-|\alpha|-\frac{|\alpha^\prime|}{2}}
			\norm{x}^{-|\beta|-\frac{|\alpha^\prime|}{2}}
			\norm{y,\eta}^{3+|\alpha+\beta|}
			& \text{if $|\alpha^\prime|>0$},
			\\
			\tau
			\norm{\xi}^{-1-|\alpha|}
			\norm{x}^{-|\beta|}
			\norm{y,\eta}^{3+|\alpha+\beta|}
			& \text{if $\alpha^\prime=\beta^\prime=0$}.
		\end{cases}
	\end{align*}
\end{lemma}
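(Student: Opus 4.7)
The starting point is the representation
\[
\varphi(y,\eta;x,\xi)=-y\cdot\eta+[A_1(\eta;x,\xi)\eta]\cdot\eta+[A_2(y;x,\xi)y]\cdot y
\]
established in Lemma \ref{lem:ab}. Since the leading bilinear form $-y\cdot\eta$ is independent of both $x$ and $\xi$, one immediately has $\varphi'_x=\partial_x([A_1\eta]\cdot\eta)+\partial_x([A_2y]\cdot y)$ and, analogously, $\varphi'_\xi=\partial_\xi([A_1\eta]\cdot\eta)+\partial_\xi([A_2y]\cdot y)$. Moreover, the summand $[A_1\eta]\cdot\eta$ is $y$-independent, hence is annihilated by any $\partial_y^{\beta'}$ with $|\beta'|>0$, while $[A_2y]\cdot y$ is $\eta$-independent and is killed by any $\partial_\eta^{\alpha'}$ with $|\alpha'|>0$. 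This dichotomy accounts precisely for the three cases appearing in the statement of the lemma.

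In each case, I would apply Leibniz to the surviving quadratic form and distribute the $\partial_\eta$ (or $\partial_y$) derivatives between the matrix $A_i$ and the polynomial factor $\eta\cdot\eta$ or $y\cdot y$. Each resulting derivative of $A_i$ is controlled by Lemma \ref{lem:abest}, while the residual polynomial in $(y,\eta)$, which has degree at most two, is bounded pointwise by $\langle y,\eta\rangle^2$. Multiplying these estimates gives the required SG weights $\langle\xi\rangle^{-|\alpha|-|\alpha'|/2}\langle x\rangle^{-1-|\beta|-|\alpha'|/2}$ (and similarly in the other cases) times $\langle y,\eta\rangle^{|\alpha+\beta|+2}$, which is majorized by the announced envelope $\langle y,\eta\rangle^{3+|\alpha+\beta|}$. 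The estimates for $\varphi'_\xi$ are obtained via the symmetric argument, exchanging the roles of the pairs $(x,\beta)$ and $(\xi,\alpha)$ throughout.

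The main obstacle lies in the accounting of the mixed terms of the Leibniz expansion, in which only some of the $\partial_\eta$ (or $\partial_y$) derivatives fall on $A_i$, the remainder differentiating the polynomial $\eta\cdot\eta$ (resp.\ $y\cdot y$). In such terms the chain rule through the arguments $N+\theta\eta\omega$ (resp.\ $Y+\theta y\omega^{-1}$) produces only a partial set of $\omega$-factors, leaving an unbalanced residual $|\eta|$ or $|y|$ of potential size $\langle x\rangle^{1/2}\langle\xi\rangle^{1/2}$. This residual has to be absorbed, using the support constraints $|\eta|\omega\lesssim\langle\xi\rangle$ and $|y|\omega^{-1}\lesssim\langle x\rangle$ provided by Definition \ref{def:dblcutoff} (and already invoked in the derivation of Lemma \ref{lem:abest}), into the extra factor of $\langle y,\eta\rangle$ present in the envelope $\langle y,\eta\rangle^{3+|\alpha+\beta|}$; this is also the reason the exponent is $3$, rather than the $2$ that would arise from the quadratic polynomial alone.
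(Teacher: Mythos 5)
Your approach coincides with the paper's: expand $\varphi'_x$ and $\varphi'_\xi$ via $d_x$ and $d_\xi$ of $[A_1\eta]\cdot\eta+[A_2 y]\cdot y$ and invoke Lemma \ref{lem:abest} for the derivatives of $A_1,A_2$; the paper's own proof is precisely that one sentence. The case split by the signs of $|\alpha'|,|\beta'|$ is also explained correctly, since $[A_1\eta]\cdot\eta$ is $y$-independent and $[A_2y]\cdot y$ is $\eta$-independent.

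The problematic point is your final paragraph on the mixed Leibniz terms. Take the simplest instance, $|\beta'|=1$, $\alpha=\beta=\alpha'=0$: applying $\partial_{y_l}$ to $d_x[(A_2 y)\cdot y]$ produces, among other terms, $(\partial_x A_2\,e_l)\cdot y$, which by Lemma \ref{lem:abest} is $\lesssim\tau\norm{x}^{-1}\norm{y,\eta}^{2}$. The estimate claimed for $|\beta'|=1$ is $\tau\norm{\xi}^{-1/2}\norm{x}^{-3/2}\norm{y,\eta}^3$, so one would need $\norm{x}^{1/2}\norm{\xi}^{1/2}\lesssim\norm{y,\eta}$ on $\supp\rho$; but the support constraint from Definition \ref{def:dblcutoff} runs the opposite way, $\norm{y,\eta}\lesssim(\norm{x}\norm{\xi})^{1/2}$, and an upper bound on $\norm{y,\eta}$ cannot be traded for decay in $\norm{x}\norm{\xi}$. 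Concretely: when $\partial_y$ falls on the polynomial $y\cdot y$ rather than on $A_2$, both the $\omega^{-1}$-factor and the extra $J$-derivative that generate the $\norm{x}^{-1/2}\norm{\xi}^{-1/2}$ decay in Lemma \ref{lem:abest} are simply absent, and losing one power of $|y|\le\norm{y,\eta}$ does not compensate. The ``absorption'' you describe therefore does not close the estimate; those mixed terms only satisfy the weaker $\alpha'=\beta'=0$ bound. The paper's one-line proof is silent on the same point. (It is worth noting that only the $\alpha,\beta$-decay is actually used downstream in Proposition \ref{prop:p0}, so the weaker bound suffices for the application; but as a proof of the lemma as stated this step is a gap and needs either a cancellation or a finer bookkeeping than the Leibniz expansion you give.)
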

\begin{proof}
	The results follow from Lemma \ref{lem:abest}, observing that
	\begin{align*}
		\varphi^\prime_x(y,\eta;x,\xi)&=
		d_x[(A_1(\eta;x,\xi)\eta)\cdot\eta]+d_x[(A_2(y;x,\xi)y)\cdot y],
		\\
		\varphi^\prime_\xi(y,\eta;x,\xi)&=
		d_\xi[(A_1(\eta;x,\xi)\eta)\cdot\eta]+d_\xi[(A_2(y;x,\xi)y)\cdot y].
	\end{align*}
\end{proof}
\begin{lemma}\label{lem:rho}
		For any multiindeces $\alpha,\beta,\alpha^\prime,\beta^\prime$, and all $x,y,\xi,\eta$,
		\[
			|\partial^\alpha_\xi\partial^{\alpha^\prime}_\eta\partial^\beta_x\partial^{\beta^\prime}_y
			\rho(y,\eta;x,\xi)|\lesssim
			\norm{\xi}^{-|\alpha|-\frac{|\alpha^\prime|}{2}}
			\norm{x}^{-|\beta|-\frac{|\beta^\prime|}{2}}.
		\]
\end{lemma}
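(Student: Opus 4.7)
My plan is to prove this estimate by Faà di Bruno's formula applied to the composition
\[
\rho(y,\eta;x,\xi) = \chi(x, u, v, \xi), \qquad
u := Y(x,\xi) + y\cdot\omega(x,\xi)^{-1}, \quad v := N(x,\xi) + \eta\cdot\omega(x,\xi),
\]
controlling each summand via Lemma \ref{lem:chisym}, the SG-orders $Y\in S^{1,0}$, $N\in S^{0,1}$, $\omega^{\pm 1}\in S^{\mp 1/2,\pm 1/2}$, and the a priori bound $|y|,|\eta|\lesssim(\x\<\xi\>)^{1/2}$ valid on $\supp\rho$.

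The first step is to combine the equivalences $\<u\>\asymp\x$ and $\<v\>\asymp\<\xi\>$ on $\supp\rho$ (see \eqref{eq:equiv}) with Lemma \ref{lem:chisym} to obtain, for all multi-indices $\gamma,\delta,a,b$,
\[
|\partial_x^{\gamma}\partial_{x'}^{a}\partial_\xi^{\delta}\partial_{\xi'}^{b}\chi(x,u,v,\xi)|\lesssim\x^{-|\gamma|-|a|}\<\xi\>^{-|\delta|-|b|}
\]
on $\supp\rho$. This reduces the problem to accounting for the symbolic orders produced by differentiating $u$ and $v$ with respect to $(y,\eta,x,\xi)$.

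Expanding $\partial_\xi^\alpha\partial_\eta^{\alpha'}\partial_x^\beta\partial_y^{\beta'}\rho$ by Faà di Bruno produces a finite linear combination of terms of the form (derivative of $\chi$)$|_{(x,u,v,\xi)}$ times products of $(y,\eta,x,\xi)$-derivatives of the components of $u$ and $v$. The linearity of $u$ in $y$ and of $v$ in $\eta$ forces each factor to carry at most one $\partial_y$ (resp.\ $\partial_\eta$), so in a generic summand we find exactly $|\beta'|$ factors of type $\omega^{-1}\in S^{1/2,-1/2}$ (possibly further differentiated in $x,\xi$) and exactly $|\alpha'|$ factors of type $\omega\in S^{-1/2,1/2}$. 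Pure $(x,\xi)$-derivatives of $u$ remain in $S^{1,0}$ with one power of $\x^{-1}$ (resp.\ $\<\xi\>^{-1}$) gained for each $\partial_x$ (resp.\ $\partial_\xi$), because the summand $y\cdot\omega^{-1}$ only contributes derivatives of $\omega^{-1}$ whose extra $|y|\lesssim(\x\<\xi\>)^{1/2}$ is absorbed by the worsened decay of $\omega^{-1}$; analogously for $v\in S^{0,1}$. Summing orders in each Faà di Bruno summand and balancing against the factor from $\chi$ yields exactly the bound $\x^{-|\beta|-|\beta'|/2}\<\xi\>^{-|\alpha|-|\alpha'|/2}$. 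The argument may be cleanly organised by induction on $|\alpha+\alpha'+\beta+\beta'|$, with the base case checked by direct computation.

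The main obstacle is the bookkeeping of the mixed terms: one must verify that every additional factor of $y$ or $\eta$ arising from differentiation of the affine parts $y\omega^{-1}$, $\eta\omega$ of $u,v$ is, together with the accompanying powers of $\omega^{\pm1}$, compensated so as to yield precisely the half-integer exponents $|\beta'|/2$ and $|\alpha'|/2$ in the final bound. This cancellation rests on the cut-off condition $|y|,|\eta|\lesssim(\x\<\xi\>)^{1/2}=\omega^{-1}\x=\omega\<\xi\>$ enforced by the support of $\rho$.
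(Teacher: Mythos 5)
Your proof is correct and follows the same route as the paper: the paper's own proof of this lemma is just the one-line remark that it is ``immediate'' from the definition of $\rho$, the cut-off properties of $\psi$ (Lemma \ref{lem:chisym}), the symbol orders $Y\in S^{1,0}$, $N\in S^{0,1}$, and the equivalences \eqref{eq:equiv}; you have simply fleshed out what ``immediate'' means via Fa\`a di Bruno, using exactly those ingredients together with the support bound $|y|,|\eta|\lesssim(\x\<\xi\>)^{1/2}$. One small algebra slip in your last sentence: since $\omega=\x^{-1/2}\<\xi\>^{1/2}$, the correct identities are $\omega\x=\omega^{-1}\<\xi\>=(\x\<\xi\>)^{1/2}$, not $\omega^{-1}\x=\omega\<\xi\>$; this does not affect the argument, which is sound.
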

\begin{proof}
	Immediate, by the definition of $\rho$, the hypotheses on $\psi$, the properties
	$Y(x,\xi)\in S^{1,0}$, $N(x,\xi)\in S^{0,1}$, and the estimates \eqref{eq:equiv}.
\end{proof}
\begin{lemma}\label{lem:Gamma}
	Let
	\[
		\Gamma=\Gamma(y,\eta;x,\xi)=1+|\varphi^\prime_y(y,\eta;x,\xi)|^2+|\varphi^\prime_\eta(y,\eta;x,\xi)|^2.
	\]
	Then, on $\supp\,\rho$, 
	for any multiindeces $\alpha,\beta,\alpha^\prime,\beta^\prime$, and all $x,y,\xi,\eta$,
	\[
		\left|
		\partial^\alpha_\xi\partial^{\alpha^\prime}_\eta\partial^\beta_x\partial^{\beta^\prime}_y
		\left(\frac{1}{\Gamma(y,\eta;x,\xi}\right)\right|
		\lesssim
		\tau\norm{\xi}^{-|\alpha|}\norm{x}^{-|\beta|}\norm{y,\eta}^{-2+|\alpha+\beta|}.
	\]
\end{lemma}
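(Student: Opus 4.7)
\textbf{Proof plan for Lemma \ref{lem:Gamma}.} The starting point is the equivalence $\Gamma(y,\eta;x,\xi) \asymp \langle y,\eta\rangle^2$ on $\supp\,\rho$. The lower bound follows from Lemma \ref{lem:diffeq}, since $|\varphi'_y|^2+|\varphi'_\eta|^2 \asymp |y|^2+|\eta|^2$ there; the matching upper bound follows from Lemmas \ref{lem:ab} and \ref{lem:abest}, which give $|\varphi'_y|, |\varphi'_\eta| \lesssim \langle y,\eta\rangle$ thanks to $\|B_1\|,\|B_2\| \lesssim \tau \leq 1$. This handles the case $|\alpha+\alpha'+\beta+\beta'|=0$ directly.

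For higher derivatives, I would apply the Fa\`a di Bruno formula to $F\circ\Gamma$ with $F(t)=1/t$. Writing $\delta=(\alpha,\alpha',\beta,\beta')$ for the combined multiindex in the four variables $(\xi,\eta,x,y)$,
\[
\partial^\delta\Bigl(\frac{1}{\Gamma}\Bigr)=\sum_{k\geq 1}\;\sum_{\substack{\delta_1+\cdots+\delta_k=\delta\\ \delta_j\neq 0}} c_k\,\Gamma^{-(k+1)}\prod_{j=1}^k\partial^{\delta_j}\Gamma.
\]
The factor $\Gamma^{-(k+1)}$ contributes $\langle y,\eta\rangle^{-2(k+1)}$. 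Each $\partial^{\delta_j}\Gamma$ is, by Leibniz applied to $\Gamma=1+|\varphi'_y|^2+|\varphi'_\eta|^2$, a sum of products $(\partial^{\sigma}\varphi'_y)\cdot(\partial^{\delta_j-\sigma}\varphi'_y)$ and the analogous terms for $\varphi'_\eta$. Each factor is estimated via Lemma \ref{lem:phiprimeest}, yielding bounds of the shape $\langle\xi\rangle^{-|\alpha_j|}\langle x\rangle^{-|\beta_j|}\langle y,\eta\rangle^{2+|\alpha_j+\beta_j|}$ for the product, possibly with an additional $\tau$-factor whenever a generic (non-pure $\partial_\eta\varphi'_y$ or $\partial_y\varphi'_\eta$) derivative is present.

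Summing the exponents gives exactly the claimed powers: the $\langle y,\eta\rangle$ exponent is
\[
-2(k+1)+\sum_{j=1}^k\bigl(2+|\alpha_j+\beta_j|\bigr)=-2+|\alpha+\beta|,
\]
and the $\langle x\rangle,\langle\xi\rangle$ exponents add up to $-|\beta|,-|\alpha|$, respectively. The half-integer weights $\langle x\rangle^{-1/2}\langle\xi\rangle^{-1/2}$ produced by single $y$- or $\eta$-derivatives (which carry no $\langle x\rangle,\langle\xi\rangle$ improvement in the target estimate) are absorbed by the inequality $|y|,|\eta|\lesssim(\langle x\rangle\langle\xi\rangle)^{1/2}$ valid on $\supp\,\rho$, which trades them for non-negative powers of $\langle y,\eta\rangle$ that fit under the target bound.

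The main obstacle is the case-splitting in Lemma \ref{lem:phiprimeest}, specifically the degenerate cases $|\alpha'|=1$ in $\varphi'_y$ and $|\beta'|=1$ in $\varphi'_\eta$, where the derivative is a pure constant rather than a $\tau$-small quantity. In each Fa\`a di Bruno term at least one $\delta_j$ must involve either a genuine $\xi$- or $x$-derivative or be of order $\geq 2$, in which case a $\tau$-factor reappears from Lemma \ref{lem:abest}; the trivial $k=0$ contribution (the base case handled above) is only present when $|\delta|=0$. A short induction on $|\delta|$, together with this observation, yields the prefactor $\tau$ uniformly in all non-trivial cases and completes the proof.
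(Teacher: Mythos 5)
Your overall approach---Fa\`a di Bruno for $1/\Gamma$, Leibniz on $\Gamma=1+|\varphi'_y|^2+|\varphi'_\eta|^2$, then the estimates from Lemmas \ref{lem:ab}, \ref{lem:abest}, \ref{lem:diffeq} and \ref{lem:phiprimeest} together with $\Gamma\asymp\langle y,\eta\rangle^2$ and the inequality $|y|,|\eta|\lesssim(\langle x\rangle\langle\xi\rangle)^{1/2}$ on $\supp\rho$---is exactly what the paper's one-line ``Immediate, by Lemmas \ref{lem:ab}, \ref{lem:abest}, \ref{lem:diffeq}, and \ref{lem:phiprimeest}'' is gesturing at, and your exponent bookkeeping for the $\langle x\rangle,\langle\xi\rangle,\langle y,\eta\rangle$ powers is correct.

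The gap is in the final paragraph, where you try to extract the prefactor $\tau$. The assertion that ``in each Fa\`a di Bruno term at least one $\delta_j$ must involve either a genuine $\xi$- or $x$-derivative or be of order $\geq2$'' is false: take $\delta=(\alpha,\alpha',\beta,\beta')=(0,e_\ell,0,0)$, a single $\eta$-derivative. Then $k=1$ and $\delta_1=\delta$ has order one and no $\xi$- or $x$-component. Concretely,
\[
\partial_{\eta_\ell}\Bigl(\frac{1}{\Gamma}\Bigr)=-\Gamma^{-2}\Bigl(-2(\varphi'_y)_\ell+2\,\varphi'_\eta\cdot\partial_{\eta_\ell}\varphi'_\eta\Bigr),
\]
and the first summand, $\Gamma^{-2}(\varphi'_y)_\ell$, is bounded only by $\Gamma^{-3/2}\asymp\langle y,\eta\rangle^{-3}$: there is no $\tau$ anywhere in it, because $\partial_{\eta_\ell}\varphi'_y=-e_\ell$ is a pure constant and $|\varphi'_y|\lesssim\langle y,\eta\rangle$ carries no smallness. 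On the region where $\langle y,\eta\rangle$ is of order one (which is reached on $\supp\rho$, e.g. near $y=\eta=0$), the bound $\langle y,\eta\rangle^{-3}$ is \emph{not} $\lesssim\tau\langle y,\eta\rangle^{-2}$ with a constant independent of $\tau$. So your induction ``producing the prefactor $\tau$ uniformly'' cannot go through as stated. The same obstruction arises for a single $y$-derivative. What you \emph{do} get in these cases is an extra unit power of $\langle y,\eta\rangle^{-1}$, which is stronger than the claimed $\langle y,\eta\rangle^{-2+|\alpha+\beta|}$ as far as the exponent is concerned; this suggests the $\tau$ in the statement of Lemma \ref{lem:Gamma} is simply an over-optimistic artifact (note that the paper's own subsequent Lemma \ref{lem:tL}, which is the only place Lemma \ref{lem:Gamma} is used, already omits the $\tau$). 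You should either record the estimate without the $\tau$, or, if you insist on keeping the $\tau$, restrict the claim to derivatives involving at least one $x$- or $\xi$-differentiation; your current argument does not prove the $\tau$ prefactor in the purely $y$- and $\eta$-derivative cases.
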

\begin{proof}
	Immediate, by Lemmas \ref{lem:ab}, \ref{lem:abest}, \ref{lem:diffeq}, and \ref{lem:phiprimeest}.
\end{proof}
The next Lemma \ref{lem:tL} is a straightforward consequence of Lemma \ref{lem:Gamma} and the definition of transpose
operator.
\begin{lemma}\label{lem:tL}
	Let us define the operator
	\[
		M=\frac{1}{\Gamma}(1-i\varphi^\prime_y(y,\eta;x,\xi)\cdot\nabla_y-i\varphi^\prime_\eta(y,\eta;x,\xi)\cdot\nabla_\eta)\]
		such that $Me^{i\varphi(y,\eta;x,\xi)}=e^{i\varphi(y,\eta;x,\xi)}.$	%
	Then, 
	\[
		{^t}M=M_0+M_1\cdot\nabla_y+M_2\cdot\nabla_\eta,
	\]
	where, on $\supp\,\rho$, 
	for any multiindeces $\alpha,\beta,\alpha^\prime,\beta^\prime$, and all $x,y,\xi,\eta$,
	\[
		\|	
		\partial^\alpha_\xi\partial^{\alpha^\prime}_\eta\partial^\beta_x\partial^{\beta^\prime}_y
		[(M_0,M_1,M_2)(y,\eta;x,\xi)]
		\|
		\lesssim
		\norm{\xi}^{-|\alpha|}\norm{x}^{-|\beta|}\norm{y,\eta}^{-1+|\alpha+\beta|}.
	\]
\end{lemma}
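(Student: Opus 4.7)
The plan is to first compute ${}^tM$ explicitly and then derive the estimates by combining Lemma \ref{lem:Gamma} and Lemma \ref{lem:phiprimeest} via the Leibniz rule.

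First, by integration by parts against an arbitrary test function (justified by the support properties of $\rho$ and the symbolic estimates available), one computes that for any first-order differential operator of the form $a + b\cdot\nabla_y + c\cdot\nabla_\eta$ with scalar $a$ and vector-valued $b,c$, its transpose is $(a - \nabla_y\cdot b - \nabla_\eta\cdot c) - b\cdot\nabla_y - c\cdot\nabla_\eta$. Applied to $M = \Gamma^{-1} - i\Gamma^{-1}\varphi'_y\cdot\nabla_y - i\Gamma^{-1}\varphi'_\eta\cdot\nabla_\eta$, this identifies
\[
M_0 = \Gamma^{-1} + i\nabla_y\cdot(\Gamma^{-1}\varphi'_y) + i\nabla_\eta\cdot(\Gamma^{-1}\varphi'_\eta), \qquad M_1 = i\Gamma^{-1}\varphi'_y, \qquad M_2 = i\Gamma^{-1}\varphi'_\eta.
\]

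Next, I would estimate each $M_j$, $j=0,1,2$, on $\supp\,\rho$ by a direct Leibniz expansion. For $M_1$ and $M_2$ this is immediate: Lemma \ref{lem:Gamma} gives $|\partial^\alpha_\xi \partial^{\alpha'}_\eta \partial^\beta_x \partial^{\beta'}_y (\Gamma^{-1})| \lesssim \langle\xi\rangle^{-|\alpha|}\langle x\rangle^{-|\beta|}\langle y,\eta\rangle^{-2+|\alpha+\beta|}$ (with no loss in $|\alpha'|,|\beta'|$, which can be absorbed by the $\langle y,\eta\rangle$ factor times $\omega$-powers harmlessly), while Lemma \ref{lem:phiprimeest} gives analogous bounds for $\varphi'_y,\varphi'_\eta$ with $\langle y,\eta\rangle^{1+|\alpha+\beta|}$. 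Multiplying the two and summing over the Leibniz partitions yields a product $\langle\xi\rangle^{-|\alpha|}\langle x\rangle^{-|\beta|}\langle y,\eta\rangle^{-1+|\alpha+\beta|}$, which is exactly the claimed bound. For $M_0$, the contribution of $\Gamma^{-1}$ itself (estimated by Lemma \ref{lem:Gamma}) is even better in $\langle y,\eta\rangle$, while the divergence terms $\nabla_y\cdot(\Gamma^{-1}\varphi'_y)$ and $\nabla_\eta\cdot(\Gamma^{-1}\varphi'_\eta)$ are estimated in the same way as $M_1,M_2$ after one additional derivative on either factor: the derivative on $\varphi'_y$ or $\varphi'_\eta$ produces either a bounded term (when the extra derivative is $\partial_y$ or $\partial_\eta$ of order $1$, by Lemma \ref{lem:phiprimeest}) or a term with the same symbolic order as $\varphi'_y,\varphi'_\eta$ themselves in the $(x,\xi)$-variables.

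The main (and only) obstacle is bookkeeping: one must track how the $\omega(x,\xi)\in S^{-1/2,1/2}$ factors that appear when $\partial_y$ and $\partial_\eta$ hit the arguments $Y(x,\xi)+y\cdot\omega^{-1}$, $N(x,\xi)+\eta\cdot\omega$ inside $A_j,B_j$ interact with the $\langle\xi\rangle^{-|\alpha|-|\cdot|/2}\langle x\rangle^{-|\beta|-|\cdot|/2}$ weights of Lemmas \ref{lem:abest} and \ref{lem:phiprimeest}; but since the statement of Lemma \ref{lem:tL} does not ask for improvement in $|\alpha'|,|\beta'|$, these refined weights can be majorized by the uniform factor $\langle y,\eta\rangle^{|\alpha+\beta|}$ on $\supp\,\rho$ (where $|y|,|\eta|\le\tilde k(\langle x\rangle\langle\xi\rangle)^{1/2}$), and the estimate collapses to the announced form. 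This completes the proof.
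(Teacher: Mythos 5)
Your proof is correct and fills in exactly the computation the paper leaves implicit: the paper dismisses this lemma as a "straightforward consequence of Lemma \ref{lem:Gamma} and the definition of transpose operator", and your argument — computing the transpose to identify $M_0 = \Gamma^{-1} + i\nabla_y\cdot(\Gamma^{-1}\varphi'_y) + i\nabla_\eta\cdot(\Gamma^{-1}\varphi'_\eta)$, $M_1 = i\Gamma^{-1}\varphi'_y$, $M_2 = i\Gamma^{-1}\varphi'_\eta$, and then estimating each factor via Lemmas \ref{lem:Gamma} and \ref{lem:phiprimeest} through the Leibniz rule — is precisely the intended route. One small remark: the "obstacle" you flag about $\omega$-factors is already absorbed into the statements of Lemmas \ref{lem:abest}, \ref{lem:phiprimeest}, and \ref{lem:Gamma} (which are formulated after the change of variables and on $\supp\,\rho$), so there is no additional bookkeeping to do beyond multiplying the two stated bounds; the $\langle y,\eta\rangle$ exponents add as $(-2+|\alpha_1+\beta_1|)+(1+|\alpha_2+\beta_2|)=-1+|\alpha+\beta|$, giving the claim directly. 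Also note that in the Leibniz term where one $\partial_\eta$ (resp.\ $\partial_y$) lands on $\varphi'_y$ (resp.\ $\varphi'_\eta$) alone, Lemma \ref{lem:phiprimeest} yields the factor $1$ and the remaining $\Gamma^{-1}$ factor is estimated by $\langle\xi\rangle^{-|\alpha|}\langle x\rangle^{-|\beta|}\langle y,\eta\rangle^{-2+|\alpha+\beta|}\le\langle\xi\rangle^{-|\alpha|}\langle x\rangle^{-|\beta|}\langle y,\eta\rangle^{-1+|\alpha+\beta|}$, consistent with the target bound.
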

\begin{proof}[Proof of Proposition \ref{prop:p0}]
	Using the operator $M$ defined in Lemma \ref{lem:tL}, we have, for arbitrary $k\in\Z_+$,
	\[
		p_0(x,\xi)=\iint e^{i\varphi(y,\eta;x,\xi)}(({^t}M)^k \rho)(y,\eta;x,\xi)\,dy\dcut\eta.
	\]
	Notice that, from the analysis above, 
	for any $k\in\Z_+$, any multiindeces $\alpha^\prime,\beta^\prime$, and all $x,y,\xi,\eta$,
	\[
		|(\partial^{\alpha^\prime}_\xi\partial^{\beta^\prime}_x(({^t}M)^k \rho))(y,\eta;x,\xi)|
		\lesssim \norm{x}^{-|\beta^\prime|}\norm{\xi}^{-|\alpha^\prime|}\norm{y,\eta}^{-k+|\alpha^\prime+\beta^\prime|}.
	\]
	Then, for any fixed $\alpha,\beta\in\Z_+^n$, and arbitrary $k\in\Z_+$, we find %
	\begin{align*}
		&\partial^\alpha_\xi\partial^\beta_x p_0(x,\xi)=
		\\
		&=\!\!\!\!\!\sum_{\alpha_1+\alpha_2=\alpha}\sum_{\beta_1+\beta_2=\beta}
		\binom{\alpha}{\alpha_1}
		\binom{\beta}{\beta_1}\!
		\iint \!\!
		\left(\partial^{\alpha_1}_\xi\partial^{\beta_1}_x e^{i\varphi(y,\eta;x,\xi)}\right)\!
		\cdot\!(\partial^{\alpha_2}_\xi\partial^{\beta_2}_x(({^t}M)^k \rho)(y,\eta;x,\xi))\,dy\dcut\eta.
	\end{align*}
	Choosing $k$ such that $-k+6|\alpha+\beta|\le-(2n+1)$, from the results in Lemmas \ref{lem:phiprime_xxi},
	\ref{lem:rho}, and \ref{lem:tL} above, we get
	\[
		|\partial^\alpha_x\partial^\beta_\xi p_0(x,\xi)|
		\lesssim 
		\norm{x}^{-|\alpha|}\norm{\xi}^{-|\beta|}\iint\norm{y,\eta}^{-(2n+1)}\,dyd\eta
		\lesssim
		\norm{x}^{-|\alpha|}\norm{\xi}^{-|\beta|},
	\]
	as claimed.
\end{proof}
\begin{remark}
Let us notice that we have proved here above that the seminorms of $p_0$ are controlled by those of $\varphi_1$ and $\varphi_2$. This implies that,  if $J_1$ and $J_2$ are bounded in $S^{1,1}$, so is $p_0$ in $S^{0,0}.$ The boundedness conditions of Theorem \ref{thm:mainbis} are so fulfilled, and the proof of Theorem \ref{thm:mainbis} is complete.\end{remark}

\section{Fundamental solution to hyperbolic systems in SG classes}\label{sec:fundsol}
\setcounter{equation}{0}
%
%
In the present section we apply the results of Sections \ref{sec:mpsgphf} and \ref{sec:sgfioprod} to construct the fundamental solution $E(t,s)$ to the Cauchy problem for a first order system of partial differential equations of hyperbolic type, with coefficients in SG classes and roots of (possibly) variable multiplicity. A standard argument, which we omit here, gives then the solution, via $E(t,s)$ and Duhamel's formula, see Theorem \ref{wp} below. We follow the approach in \cite[Section 10.7]{Kumano-go:1}. 

Let us consider the Cauchy problem
\begin{equation}\label{sys}
\begin{cases}
	LW(t,x) = F(t,x) & (t,x)\in (0,T]\times\R^n, \\
	W(0,x)  = W_0(x)  & x\in \R^n,
\end{cases}
\end{equation}
where
\begin{equation}\label{L}
L(t,x,D_t,D_x) = D_t + \Lambda(t,x,D_x) +R(t,x,D_x),
\end{equation}
$\Lambda$ is an $m\times m$ diagonal operator matrix whose entries $\lambda_j(t,x,D_x)$, $j=1,\dots, m$, are pseudo-differential operators with symbols $\lambda_j(t,x,\xi)\in C([0,T]; S^{\epsilon,1})$, $\epsilon\in[0,1]$, and $R$ is an $m\times m$-operator matrix with elements in $C([0,T],S^{\epsilon-1,0})$. The case $\epsilon=0$ corresponds to symbols uniformly bounded in the space variable, while the case $\epsilon=1$ is the standard situation of SG symbols with equal order components.

Assume also that the system \eqref{L} is of hyperbolic type, that is, $\lambda_j(t,x,\xi)\in\R$, $j=1,\dots,m$. Notice that, differently
from \cite{Coriasco:998.2, Coriasco:998.3}, here
we do not impose any ``separation condition at infinity'' on the $\lambda_j$, $j=1,\dots,m$.
Indeed, the results presented below apply both to the constant as well as the variable multiplicities cases.

For $0<T_0\leq T$, we define $\Delta_{T_0}:=\{(t,s)\vert\ 0\leq s\leq t\leq T_0\}$. The fundamental solution of 
\eqref{sys} is a family $\{E(t,s)\vert (t,s)\in \Delta_{T_0}\}$ of SG FIOs, satisfying 
\begin{equation}\label{tocheck}
	\begin{cases}
		LE(t,s) = 0 	& (t,s)\in \Delta_{T_0},\\
		E(s,s)=I						& s\in[0,T_0].
	\end{cases}
\end{equation}
In this section we aim to show that, if $T_0$ is small enough, it is possible to construct the family $\{E(t,s)\}$ satisfying \eqref{tocheck}.

As a consequence of \eqref{tocheck}, it is quite easy to get the following:
\begin{theorem}\label{wp}
For every $F\in C([0,T]; H^{r,\varrho}(\R^n))$ and $G\in H^{r,\varrho}(\R^n)$, the solution $W(t,x)$ of the Cauchy problem \eqref{sys} exists uniquely, it belongs to the class $C([0,T_0], H^{r-(\epsilon-1),\varrho}(\R^n))$, and it is given by
$$W(t)=E(t,0)G+i\ds\int_0^t E(t,s)F(s)ds,\quad t\in[0,T_0].$$
\end{theorem}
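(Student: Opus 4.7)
The plan is to split the proof of Theorem \ref{wp} into three independent steps---existence via the proposed Duhamel-type formula, regularity via the Sobolev continuity of SG FIOs of type I, and uniqueness via an energy estimate---using as sole input the fundamental solution $\{E(t,s)\}_{(t,s)\in\Delta_{T_0}}$ constructed in the earlier portion of Section \ref{sec:fundsol}. By that construction, each $E(t,s) = \Op_{\varphi_{ts}}(a_{ts})$ is a SG FIO with regular phase $\varphi_{ts}\in\Phr(c|t-s|)$ (obtained via Proposition \ref{trovala!} and the multi-product of Theorem \ref{thm:main}) and amplitude $a_{ts}$ bounded, uniformly on $\Delta_{T_0}$, in an SG class whose order $(\epsilon-1,0)$ encodes the announced gain of decay.

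First I would verify existence by direct substitution. Setting $W(t) := E(t,0)G + i\int_0^t E(t,s)F(s)\,ds$, and using $D_t E(t,s) = -(\Lambda+R)E(t,s)$, which comes from $LE(t,s) = 0$, together with $E(t,t) = I$, a differentiation under the integral sign yields
\[
D_t W(t) = -(\Lambda+R)E(t,0)G + F(t) - (\Lambda+R)\Bigl[i\!\int_0^t\! E(t,s)F(s)\,ds\Bigr] = -(\Lambda+R)W(t) + F(t),
\]
so $LW = F$ on $(0,T_0]$, while $W(0) = E(0,0)G = G = W_0$ is immediate. For the regularity I would invoke the Sobolev continuity of SG FIOs of type I---a direct consequence, via the composition Theorems \ref{thm:compi}--\ref{thm:compii} and the elliptic SG parametrix construction, of the $L^2$-continuity result of \cite{Coriasco:998.1}: since $a_{ts}$ lies uniformly in $S^{\epsilon-1,0}$ on $\Delta_{T_0}$, each $E(t,s)$ is a bounded map $H^{r,\varrho}\to H^{r-(\epsilon-1),\varrho}$ with operator norm controlled by finitely many SG-seminorms of $a_{ts}$ and hence uniformly bounded on $\Delta_{T_0}$. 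Continuity in $t$ of $t\mapsto E(t,0)G$ and of the integral term then follows from the $C^1$-dependence of $(\varphi_{ts},a_{ts})$ on $(t,s)$ (Proposition \ref{trovala!}) together with $F\in C([0,T]; H^{r,\varrho})$ by a routine dominated-convergence argument.

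Uniqueness is then obtained via an energy estimate: if $W_1,W_2$ are two solutions, $\widetilde W := W_1-W_2$ satisfies $L\widetilde W = 0$ with $\widetilde W(0)=0$. Because $\Lambda$ has real, diagonal principal symbol, $\Lambda$ equals its formal adjoint modulo $\Op(S^{\epsilon-1,0})$, so conjugation of $L\widetilde W=0$ by $\langle x\rangle^r\langle D\rangle^\varrho$ and Theorem \ref{thm:compi} produce commutators of order $(\epsilon-1,0)$, which together with the sharp G\r{a}rding inequality reduce the $H^{r,\varrho}$-energy identity to $\tfrac{d}{dt}\|\widetilde W(t)\|_{r,\varrho}^2 \le C\|\widetilde W(t)\|_{r,\varrho}^2$, forcing $\widetilde W\equiv 0$ on $[0,T_0]$ via Gronwall. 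The main obstacle I expect is not in any of these three steps, which become essentially routine once $\{E(t,s)\}$ is in hand, but rather in the prior identification of the exact symbol class of $a_{ts}$---precisely where the gain of decay $r\mapsto r-(\epsilon-1)=r+1-\epsilon$ is encoded---and this is where the multi-product machinery of Section \ref{sec:sgfioprod}, in particular Theorem \ref{thm:main}, is indispensable, since $E(t,s)$ is constructed as a limit of multi-products of elementary SG FIOs associated to a fine partition of $[s,t]$.
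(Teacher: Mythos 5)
The paper gives no proof of Theorem~\ref{wp}, remarking only that it follows by a ``standard argument'' once the fundamental solution $\{E(t,s)\}$ satisfying \eqref{tocheck} is available, via Duhamel's formula; your existence and regularity steps carry out exactly that argument (direct substitution using $LE=0$ and $E(s,s)=I$, then the Sobolev mapping properties of $E(t,s)$ established in the preceding construction), so on those points the approaches coincide. Your uniqueness step is where you take a genuinely different route: you use an $H^{r,\varrho}$-energy estimate plus Gronwall, whereas the paper's phrasing ``via $E(t,s)$'' points to the usual dual argument — from the backward equation $\partial_s E(t,s)=iE(t,s)\bigl(\Lambda(s,\cdot,D)+R(s,\cdot,D)\bigr)$ one gets $\partial_s\bigl[E(t,s)\widetilde W(s)\bigr]\equiv 0$ for any null solution $\widetilde W$, hence $\widetilde W(t)=E(t,0)\widetilde W(0)=0$. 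Both methods are valid; yours is self-contained and avoids having to produce the $s$-derivative of $E$, at the cost of a conjugation argument. Two small corrections to your version: (i) the sharp G{\aa}rding inequality is more machinery than you need, since the $\lambda_j$ being real-valued already gives $\Lambda-\Lambda^*\in\Op(S^{\epsilon-1,0})\subseteq\Op(S^{0,0})$ and $[\<x\>^r\<D\>^\varrho,\Lambda+R]\,(\<x\>^r\<D\>^\varrho)^{-1}\in\Op(S^{0,0})$ by the SG symbolic calculus, so $\tfrac{d}{dt}\|\widetilde W(t)\|_{r,\varrho}^2\lesssim\|\widetilde W(t)\|_{r,\varrho}^2$ follows from Cauchy--Schwarz and Theorem~\ref{thm:sobcont} alone, with no positivity argument; (ii) those commutators are between pseudodifferential operators, so the relevant tool is the SG $\psi$do calculus rather than the FIO--$\psi$do composition Theorem~\ref{thm:compi}, and the fundamental solution $E$ is built by a Picard iteration of nested time-integrals of FIO compositions, not as a literal limit over fine partitions of $[s,t]$ (the partition picture is the one used for the eikonal multi-product in Section~\ref{sec:mpsgphf}, not for $E$). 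Neither slip affects the correctness of your argument.
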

\begin{remark}
Theorem \ref{wp} gives well-posedness of the Cauchy problem \eqref{sys} in $\mathcal S(\R^n)$ and $\mathcal S'(\R^n)$;  moreover it gives "well posedness with loss/gain of decay" (depending on the sign of $r$) of \eqref{sys} in weighted Sobolev spaces $H^{r,\varrho}(\R^n).$ This phenomenon is quite common in the theory of hyperbolic partial
differential equations with SG type coefficients, see \cite{AC06,AC10,AC13}.
We remark that in the symmetric case $\epsilon=1$ the Cauchy problem \eqref{sys} turns out to be well-posed also in $H^{r,\varrho}(\R^n).$
\end{remark}

To begin, consider SG phase functions $\varphi_j = \varphi_j(t,s,x,\xi),\ 1\leq j\leq m,$ defined on $\Delta_{T_0}\times\R^{2n}$, and define the operator matrix
\[ I_\varphi(t,s) = \begin{pmatrix} I_{\varphi_1}(t,s) & & 0 \\ & \ddots & \\ 0 & & I_{\varphi_m}(t,s)\end{pmatrix}, \]
where $I_{\varphi_j}:=Op_{\varphi_j}(1),$ $1\leq j\leq m$. From Theorem \ref{thm:compi} (see Remark \ref{rem:asymp}) we see that
\beqsn
D_t I_{\varphi_j} + \lambda_j(t,x,D_x)I_{\varphi_j}& = &\int e^{i\varphi_j(t,s,x,\xi)}\frac{\partial\varphi_j}{\partial t}(t,s,x,\xi)\dbar\xi 
\\
&+& \int e^{i\varphi_j(t,s,x,\xi)} \lambda_j(t,x,\varphi'_{j,x}(t,s,x,\xi))\dbar\xi 
\\
&+& \int e^{i\varphi_j(t,s,x,\xi)} b_{0,j}(t,s,x,\xi) \dbar\xi,
\eeqsn
where $b_{0,j}(t,s)\in S^{\epsilon-1,0}\subseteq S^{0,0}$. The first two integrals in the right-hand side of the equation here above cancel if we choose $\varphi_j$, $j=1,\dots,m$, to be the solution to the eikonal equation \eqref{eik} associated with the symbol $a=\lambda_j$, $j=1,\dots,m$. By Proposition \ref{trovala!}, this is possible, provided that $T_0$ is small enough. Writing $B_{0,j}:=Op_{\varphi_j}(b_{0,j})$, we define the family $\{W_1(t,s);(t,s)\in\Delta_{T_0}\}$ of SG FIOs by
\beqsn
W_1(t,s,x,D_x):= -i\left( \begin{pmatrix} B_{0,1}(t,s,x,D_x) & & 0 \\ & \ddots & \\ 0 & & B_{0,m}(t,s,x,D_x)\end{pmatrix} + R(t,x,D_x)\right)I_{\varphi}(t,s,x,D_x),
\eeqsn
and we denote by $w_1(t,s,x,\xi)$ the symbol of $W_1(t,s,x,\xi).$ Notice that
\begin{equation}\label{heart}
L(t,x,D_x)I_{\varphi}(t,s,x,D_x) = i W_1(t,s,x,D_x),
\end{equation}
that is, $iW_1$ is the residual of system \eqref{sys} for $I_\varphi$. We define then by induction the sequence of $m\times m$-matrices of SG FIOs, denoted by $\{W_\nu(t,s);(t,s)\in\Delta_{T_0}\}_{\nu\in\N}$, as 
\begin{equation}\label{eq:Wn+1}
  W_{\nu+1}(t,s,x,D_x) = \int_s^t W_1(t,\theta,x,D_x)W_\nu(\theta,s,x,D_x)d\theta,
\end{equation}
and we denote by $w_{\nu+1}(t,s,x,\xi)$ the symbol of $W_{\nu+1}(t,s,x,D_x).$
We are now going to prove that the operator norms of $W_\nu$, seen as operators from the Sobolev space $H^{r,\varrho}$ into $H^{r-(\nu-1)(\epsilon-1),\varrho}$ for any fixed $(r,\varrho)\in\R^2$ can be estimated from above by
\begin{equation}\label{claim}
\|W_\nu(t,s)\|_{\mathcal L(H^{r,\varrho},H^{r-(\nu-1)(\epsilon-1),\varrho})}\leq \frac{C_{r,\varrho}^{\nu-1}|t-s|^{\nu-1}}{(\nu-1)!} \leq \frac{C_{r,\varrho}^{\nu-1}T_0^{\nu-1}}{(\nu-1)!},
\end{equation}
for all $(t,s)\in\Delta_{T_0}$ and $\nu\in\N$, where $C_{r,\varrho}$ is a constant which only depends on $r,\varrho$.

To deal with the operator norms in \eqref{claim}, we need to explicitly write the matrices $W_\nu$. An induction in \eqref{eq:Wn+1} easily shows that
\begin{equation}\label{eq:wn+12}
	W_\nu(t,s) = \int_s^t\int_s^{\theta_1}\ldots\int_s^{\theta_{\nu-2}} W_1(t,\theta_1)\ldots W_1(\theta_{\nu-2},\theta_{\nu-1})d\theta_{\nu-1}\ldots d\theta_1.
\end{equation}
The integrand is a product of $\nu-1$ $m\times m$-matrices  of SG FIOs, therefore it is an operator matrix whose entries consist of $m^{\nu-2}$ summands of compositions of $\nu-1$ SG FIOs. Denoting by $Q_1\circ\ldots\circ Q_{\nu-1}$ one of these compositions, where each of the $Q_j$ is one of the $m^2$ entries of the $m\times m$-matrix of SG FIOs $W_1$, we have from Example \ref{32} and (2) of Theorem \ref{thm:main} that $Q_1\circ\ldots\circ Q_{\nu-1}$ is again a SG FIO with symbol $q_{1,\ldots,\nu-1}\in S^{(\nu-1)(\epsilon-1),0}\subseteq S^{0,0}$. Moreover, from (3) of Theorem \ref{thm:main}, for all $\ell\in\N$ there exists $C_\ell>0$ and $\ell'\in\N_0$ such that
\beqsn
&&|||q_{1,\ldots,\nu-1}(t,\theta_1,\ldots,\theta_{\nu-1})|||_{\ell}^{(\nu-1)(\epsilon-1),0}
\\
&&\hskip+3cm \leq C_\ell^{\nu-2} |||q_1(t,\theta_1)|||_{\ell'}^{\epsilon-1,0} \ldots |||q_{\nu-1}(\theta_{\nu-2},\theta_{\nu-1})|||_{\ell'}^{\epsilon-1,0},
\eeqsn 
where for $j=1,\ldots,\nu-1$, $q_j(t,s)$ denotes the symbol of the SG FIO $Q_j(t,s)$, $(t,s)\in\Delta_{T_0}$.
Now we set
\[ \bar{\sigma} := \sup_{j=1,\ldots,\nu-1}\sup_{(t,s)\in\Delta_{T_0}} |||q_j(t,s)|||_{\ell'}^{\epsilon-1,0} < \infty, \]
so that
\[ |||q_{1,\ldots,\nu-1}(t,\theta_1,\ldots,\theta_{\nu-1})|||_{\ell}^{(\nu-1)(\epsilon-1),0} \leq C_\ell^{\nu-2} \bar{\sigma}^{\nu-1}.\]
The continuity of the SG FIOs $Q_1\circ\ldots \circ Q_{\nu-1}(t,\theta_1,\cdots,\theta_{\nu-1}):H^{r,\varrho}\longrightarrow H^{r-(n-1)(\epsilon-1),\varrho}$ (see Theorem \ref{thm:sobcont}) and the previous inequality give that for every $r,\varrho$ there exist constants $C_{r,\varrho}>0$ (depending only on the indeces of the Sobolev space) and $\ell_{r,\varrho}\in\N_0$ such that for all $u\in H^{r,\varrho}$
\beqs\label{eq:calderonvaillancourt}
&&\|Q_1(t,\theta_1)\circ\ldots\circ Q_{\nu-1}(\theta_{\nu-2}\theta_{\nu-1})u\|_{r-(n-1)(\epsilon-1),\varrho}
\\
\nonumber
&&\hskip+3cm \leq C_{r,\varrho}|||q_{1,\ldots,\nu-1}(t,\theta_1,\ldots,\theta_{\nu-1})|||^{(\nu-1)(\epsilon-1),0}_{\ell_{r,\varrho}}\|u\|_{r,\varrho}
\\
\nonumber
&&\hskip+3cm \leq C_{r,\varrho} C_{\ell_{r,\varrho}}^{\nu-2} \bar{\sigma}^{\nu-1}\|u\|_{r,\varrho}.
\eeqs
Therefore, in the operator matrix $W_1(t,\theta_1)\ldots W_1(\theta_{\nu-2},\theta_{\nu-1})$, the operator norm of each entry can be bounded from above by $m^{\nu-2}C_{r,\varrho}C_{\ell_{r,\varrho}}^{\nu-2}\bar{\sigma}^{\nu-1}$. Now by \eqref{eq:wn+12} and \eqref{eq:calderonvaillancourt} we deduce that
\beqs
\nonumber
&&\|W_\nu(t,s)\|_{\mathcal L(H^{r,\varrho},H^{r-(\nu-1)(\epsilon-1),\varrho})}
\\
\nonumber
&&\hskip+1cm \leq \int_s^t\int_s^{\theta_1}\ldots\int_s^{\theta_{\nu-2}} \|W_1(t,\theta_1)\ldots W_1(\theta_{\nu-2},\theta_{\nu-1})
  \|_{\mathcal L(H^{r,\varrho},H^{r-(\nu-1)(\epsilon-1),\varrho})} d\theta_{\nu-1}\ldots d\theta_1 
 \\\nonumber
 &&\hskip+1cm 
 \leq m^{\nu-2}C_{r,\varrho}C_{\ell_{r,\varrho}}^{\nu-2}\bar{\sigma}^{\nu-1}\int_s^t\int_s^{\theta_1}\ldots\int_s^{\theta_{\nu-2}} d\theta_{\nu-1}\ldots d\theta_1 
 \\\label{eq:normWn}
&&\hskip+1cm \leq \frac{m^{\nu-2}C_{r,\varrho}C_{\ell_{r,\varrho}}^{\nu-2}\bar{\sigma}^{\nu-1}|t-s|^{\nu-1}}{(\nu-1)!}= \frac{\tilde C_{r,\varrho}^{\nu-1}|t-s|^{\nu-1}}{(\nu-1)!}
\eeqs
for a new constant $\tilde C_{r,\varrho}$ depending only on $r,\varrho$, which yields the claim \eqref{claim}.

Now, using the estimate \eqref{claim}, we can show that the sequence of SG FIOs, defined for all $(t,s)\in\Delta_{T_0}$ and all $N\in\N$ by
\begin{equation}\label{eq:En}
	E_N(t,s) = I_\varphi(t,s) + \int_s^t I_\varphi(t,\theta)\sum_{\nu=1}^N W_\nu(\theta,s)d\theta,
\end{equation}
is a well-defined SG FIO in $\mathcal L(H^{r,\varrho},H^{r-\epsilon+1,\varrho})$ for every $r,\varrho$, and converges, as $N\to\infty$, to the well-defined SG FIO,
belonging to $\mathcal L(H^{r,\varrho},H^{r-\epsilon+1,\varrho})$, given by
\begin{equation}\label{eq:E}
	E(t,s) = I_\varphi(t,s) + \int_s^t I_\varphi(t,\theta)\sum_{\nu=1}^\infty W_\nu(\theta,s)d\theta.
\end{equation}
$E(t,s)$ in \eqref{eq:E} is the fundamental solution to the system \eqref{sys} in the sense that it satisfies \eqref{tocheck}. Indeed, at symbols level, with the notations $E_N=Op_\varphi(e_N)$, $E=Op_\varphi(e)$ and $W_1\circ\cdots \circ W_1= Op_\varphi(\sigma_{\nu-1})$,  for every $l\in\N$ and $|\alpha+\beta|\leq \ell$, we have
\begin{align*}\label{eq:boundsforsigma}
  & |\partial^\alpha_\xi\partial^\beta_x e_N(t,s,x,\xi)| \\
  & \leq \int_s^t \sum_{\nu=1}^N |\partial^\alpha_\xi\partial^\beta_x w_\nu(\theta,s,x,\xi)|d\theta\\
  & \leq \sum_{\nu=1}^N\int_s^t\int_s^\theta\int_s^{\theta_1}\hskip-0.4cm\ldots\int_s^{\theta_{\nu-2}} \big|\partial^\alpha_\xi\partial^\beta_x \sigma_{\nu-1}(t,\theta_1,\ldots,\theta_{\nu-1},x,\xi)\big|  d\theta_{n-1}\ldots d\theta_1d\theta \\
  & \leq \sum_{\nu=1}^N\int_s^t\ldots\int_s^{\theta_{\nu-2}} |||\sigma_{\nu-1}(t,\theta_1,\ldots,\theta_{\nu-1})|||_\ell^{(\nu-1)(\epsilon-1),0}\x^{(\nu-1)(\epsilon-1)-|\beta|}\langle \xi\rangle^{-|\alpha|} d\theta_{\nu-1}\ldots d\theta\\
  & \leq \x^{\epsilon-1-|\beta|}\langle \xi\rangle^{-|\alpha|} \sum_{\nu=1}^N \frac{m^{\nu-2}C_{\ell}^{\nu-2}\bar{\sigma}^{\nu-1}|t-s|^{\nu-1}}{(\nu-1)!},
\end{align*}
so
\[ |||e_N(t,s)|||^{\epsilon-1,0}_{\ell} \leq \sum_{\nu=0}^{N-1} \frac{(C'_{\ell}|t-s|)^\nu}{\nu!}, \]
for a new constant $C'_{\ell}>0$. Then, for $N\to\infty$ we get
\[ |||e(t,s)|||^{\epsilon-1,0}_\ell \leq \exp(C'_{\ell}(t-s)) < \infty. \]
Thus, the SG FIO \eqref{eq:E} has a well-defined symbol. On the other hand, at operator's level, by definitions \eqref{eq:En} and \eqref{L} we have
\beqs\label{vce}
  LE_N = LI_\phi - i\sum_{\nu=1}^N W_\nu(t,s) + \int_s^t LI_\phi(t,\theta)\sum_{\nu=1}^N W_\nu(\theta,s)d\theta.
\eeqs
An induction shows that
\begin{equation}\label{eq:inductionW}
  \sum_{\nu=1}^N W_\nu(t,s) = -i(LI_\phi)(t,s) - i\int_s^t (LI_\phi)(t,\theta)\sum_{\nu=1}^{N-1} W_\nu(\theta,s)d\theta.
\end{equation}
Indeed, for $N=2$ we have by \eqref{heart} and \eqref{eq:Wn+1}
\[ W_1(t,s)+W_2(t,s) = -i(LI_\phi)(t,s) - i\int_s^t (LI_\phi)(t,\theta)W_1(\theta,s)d\theta;\]
the induction step $N\mapsto N+1$ works as follows:
\beqsn
  \sum_{\nu=1}^{N+1}W_\nu(t,s)
  & =& W_{N+1}(t,s) + \sum_{\nu=1}^{N} W_\nu(t,s) \\
  & =& -i\int_s^t (LI_\phi)(t,\theta)W_N(\theta,s)d\theta - i(LI_\phi)(t,s)- i\int_s^t (LI_\phi)(t,\theta)\sum_{\nu=1}^{N-1}W_\nu(\theta,s)d\theta \\
  & =& - i(LI_\phi)(t,s) - i\int_s^t (LI_\phi)(t,\theta)\sum_{\nu=1}^{N}W_\nu(\theta,s)d\theta.
\eeqsn
Substituting \eqref{eq:inductionW} into \eqref{vce} we get
\beqsn  (LE_N)(t,s) = \int_s^t (LI_\phi)(t,\theta)W_N(\theta,s)d\theta.
\eeqsn
Now, for $N\to\infty$, $\|W_N(t,s)\|_{\mathcal L(H^{r,\varrho},H^{r-(N-1)(\epsilon-1),\varrho})}\to0$ because of \eqref{eq:normWn}; thus $LE_N\to LE=0$. Moreover, it's easy to verify that $E(s,s)=I$. So, \eqref{tocheck} is fulfilled, and we have constructed the fundamental solution to $L$.
As it concerns the dependence of the fundamental solution on the parameters $(t,s)$, we finally notice that the SG FIO-valued map $(t,s)\mapsto E(t,s)$ belongs to $C(\Delta_{T_0})$, since $E$ is obtained by continuous operations of operators which are continuous in $t,s$, see \eqref{eq:E}.


\bibliographystyle{abbrv}

\end{document}